\def\imnum{\mathrm{i}\,}
\def\diff{\,\mathrm{d}}
\def\pim{\pi_{-}}
\newdefinition{defn}{Definition}
\newdefinition{rem}{Remark}
\newdefinition{exam}{Example}
\newtheorem{thm}{Theorem}
\newtheorem{lem}[thm]{Lemma}
\newproof{proof}{Proof}
\newcommand{\mathbd}[1]{\boldsymbol{#1}}
\renewcommand{\Re}{\mathop{\mathrm{Re}}\nolimits}
\renewcommand{\Im}{\mathop{\mathrm{Im}}\nolimits}
\newcommand{\Rset}{\mathbb{R}}
\DeclareMathOperator{\arcsinh}{arcsinh}
\DeclareMathOperator{\arctanh}{arctanh}
\DeclareMathOperator{\Si}{Si}
\DeclareMathOperator{\Order}{O}
\DeclareMathOperator{\diag}{diag}
\DeclareMathOperator{\Bfunc}{B}
\newcommand{\trans}{\mathrm{T}}
\renewcommand{\pi}{\piup}
\newcommand{\textSE}{\text{\tiny{\rm{SE}}}}
\newcommand{\textDE}{\text{\tiny{\rm{DE}}}}
\newcommand{\SEt}{\psi^{\textSE}}
\newcommand{\DEt}{\psi^{\textDE}}
\newcommand{\SEtInv}{\phi^{\textSE}}
\newcommand{\DEtInv}{\phi^{\textDE}}
\newcommand{\SEtDiv}{\{\SEt\}'}
\newcommand{\DEtDiv}{\{\DEt\}'}
\newcommand{\TransSE}{\mathcal{T}^{\textSE}}
\newcommand{\TransDE}{\mathcal{T}^{\textDE}}
\newcommand{\ProjSE}{\mathcal{P}_N^{\textSE}}
\newcommand{\ProjDE}{\mathcal{P}_N^{\textDE}}
\newcommand{\yn}{\mathbd{y}^{(N)}}
\newcommand{\tyn}{\mathbd{\tilde{y}}^{(N)}}
\newcommand{\tSE}{t^{\textSE}}
\newcommand{\tDE}{t^{\textDE}}
\newcommand{\domD}{\mathscr{D}}
\newcommand{\LC}{\mathbf{L}}
\newcommand{\Hinf}{\mathbf{H}^{\infty}}
\newcommand{\MC}{\mathbf{M}}
\newcommand{\Xsp}{\mathbf{X}}
\newcommand{\Ysp}{\mathbf{Y}}
\newcommand{\Csp}{\mathbf{C}}
\newcommand{\Vol}{\mathcal{V}}
\newcommand{\Jint}{\mathcal{J}}
\newcommand{\JnSE}{\mathcal{J}_N^{\textSE}}
\newcommand{\JnDE}{\mathcal{J}_N^{\textDE}}
\newcommand{\VolSEn}{\Vol_N^{\textSE}}
\newcommand{\VolDEn}{\Vol_N^{\textDE}}
\journal{Elsevier}
\begin{document}

\begin{frontmatter}



\title{Theoretical analysis of
Sinc-collocation methods and
Sinc-Nystr\"{o}m methods
for initial value problems}


\author{Tomoaki Okayama}

\address{Graduate School of Economics, Hitotsubashi University,
2-1 Naka, Kunitachi, Tokyo 186-8601, Japan}
\ead{tokayama@econ.hit-u.ac.jp}

\begin{abstract}
A Sinc-collocation method has been proposed by Stenger,
and he also gave theoretical analysis of the method
in the case of a `scalar' equation.
This paper extends the theoretical results to the case of a `system'
of equations.
Furthermore, this paper proposes more efficient method
by replacing the variable transformation employed in Stenger's method.
The efficiency is confirmed by both of theoretical analysis
and numerical experiments.
In addition to the existing and newly-proposed Sinc-collocation methods,
this paper also gives similar theoretical results
for Sinc-Nystr\"{o}m methods proposed by Nurmuhammad et al.
From a viewpoint of the computational cost,
it turns out that the newly-proposed Sinc-collocation method is
the most efficient among those methods.
\end{abstract}

\begin{keyword}
Sinc approximation
\sep Sinc indefinite integration
\sep differential equation
\sep Volterra integral equation
\sep tanh transformation
\sep double-exponential transformation
\MSC 65L05 \sep 65R20 \sep 65D30
\end{keyword}

\end{frontmatter}



\setlength{\abovedisplayskip}{5pt}
\setlength{\belowdisplayskip}{5pt}


\section{Introduction}
\label{sec:introduction}
The concern of this paper is
a system of initial value problems of the form
\begin{equation}
\begin{cases}
\mathbd{y}'(t)= K(t)\mathbd{y}(t)+\mathbd{g}(t),& a\leq t\leq b, \\
\,\mathbd{y}(a)=\mathbd{r},&
\end{cases}
\label{Ini}
\end{equation}
where $K(t)$ is an $n\times n$ matrix,
and $\mathbd{y}(t),\,\mathbd{g}(t),\,\mathbd{r}$
are column vectors of order $n$.
For Eq.~\eqref{Ini},
several numerical methods
based on the Sinc approximation have been developed
so far~\cite{carlson97:_sinc,nurmuhammad05:_numer,stenger93:_numer},
and in general, those methods converge exponentially.
For example, Carlson et al.~\cite{carlson97:_sinc}
proposed a Sinc-collocation method
for Eq.~\eqref{Ini},
and they also claimed that
its convergence rate is $\Order(N^2\exp(-c\sqrt{N}))$.
However,
their method is designed
not for the finite interval $[a,\,b]$
but for the infinite interval $(-\infty,\infty)$ or $[0,\,\infty)$,
and users have to know solution's behavior as $t\to \infty$
to implement the method.
In addition, users also have to know solution's regularity
for implementation.
It is not so practical to assume that
solution's behavior and regularity can be known in prior to computation,
since the solution is an \emph{unknown} function.

Instead of solving Eq.~\eqref{Ini},
Stenger~\cite{stenger93:_numer} firstly transformed the problem
to the Volterra integral equation of the second kind:
\begin{equation}
\mathbd{y}(t)=\mathbd{r}
 + \int_a^t \left\{\mathbd{g}(s) + K(s)\mathbd{y}(s)\right\}\diff s,
\quad a\leq t\leq b,
\label{Vol}
\end{equation}
and derived a Sinc-collocation method for Eq.~\eqref{Vol}.
His method does not require solution's behavior as $t\to\infty$
when the given interval $[a,\,b]$ is finite.
In addition,
he showed theoretically that
solution's regularity needed for implementation
can be found from the \emph{known} functions.
This is an advantage of his method over that of Carlson et al.
Moreover,
he also showed that
the convergence rate of his method is
$\Order(\sqrt{N}\exp(-c\sqrt{N}))$,
where $c$ is the same constant as the result of Carlson et al.
It should be noted that
those theoretical results were shown only in the case
where Eq.~\eqref{Vol} is a scalar equation ($n=1$),
although the method was proposed for a system of equations.
This is because the analysis relies on the
explicit form of the solution $y$
that holds \emph{only} in the scalar case.

The first objective of this study is to extend Stenger's theoretical results
to a system of equations.
That is, this paper shows that even in the case of a system of equations,
solution's regularity actually can be found from the known functions
$K(t)$ and $\mathbd{g}(t)$,
and also shows that
his method converges with the rate:
$\Order(\sqrt{N}\exp(-c\sqrt{N}))$.

The second objective, which is more important in this paper,
is to improve Stenger's method.
The main idea here is replacement of the
variable transformation;
the ``Single-Exponential transformation'' (SE transformation)
is employed in the method of Stenger (and also Carlson et al.),
but it is replaced with the ``Double-Exponential transformation''
(DE transformation)
in the proposed method.
Those two methods are referred to as
the SE-Sinc-collocation method and the
DE-Sinc-collocation method, respectively.
It has been known that
the replacement of the variable transformation
often accelerates the convergence~\cite{mori01:_doubl,sugihara04:_recen},
and in fact,
this paper shows
by theoretical analysis that the rate is drastically improved to
$\operatorname{O}(\exp(-c' N/\log N))$ by the replacement.

The third objective of this study is
to give similar theoretical results
for Sinc-Nystr\"{o}m methods for Eq.~\eqref{Vol}.
The methods
have been proposed by
Nurmuhammad et al.~\cite{nurmuhammad05:_numer},
where both the SE transformation and
the DE transformation are considered.
Those two methods are referred to as
the SE-Sinc-Nystr\"{o}m method and the
DE-Sinc-Nystr\"{o}m method, respectively.
Any convergence analysis has not been given for both
Sinc-Nystr\"{o}m methods,
and as it stands users have no clue to decide
which to choose out of the four methods:
SE/DE-Sinc-collocation methods and SE/DE-Sinc-Nystr\"{o}m methods.
To improve the situation,
this paper analyzes the errors theoretically, and
shows that the convergence rate
of the SE-Sinc-Nystr\"{o}m method is
$\Order(\exp(-c\sqrt{N}))$,
and that of the DE-Sinc-Nystr\"{o}m method is
$\Order(\frac{\log N}{N}\exp(-c' N/\log N))$.

From a viewpoint of the convergence rate,
the DE-Sinc-Nystr\"{o}m method seems to be the best among the four
methods.
From a viewpoint of the computational cost, however,
the DE-Sinc-collocation method
has several advantages (see discussion in Section~\ref{subsec:discuss}).
Moreover, according to the theoretical analysis in this study,
the difference of the convergence rate
between the two is quite small,
and in fact we can confirm it in numerical experiments
(see Section~\ref{sec:numer_exam}).
Therefore, the (proposed) DE-Sinc-collocation method
compares favorably with the DE-Sinc-Nystr\"{o}m method.

The remainder of this paper is organized as follows.
In Section~\ref{sec:review_sinc},
basic definitions and theorems
of Sinc methods are stated.
In Section~\ref{sec:numer_schemes},
four numerical methods to be considered:
SE/DE-Sinc-Nystr\"{o}m methods
and SE/DE-Sinc-collocation methods are described.
Main theoretical results
are stated in Section~\ref{sec:theoret_results},
and their proofs are given in Section~\ref{sec:proofs}.
Numerical examples are presented in Section~\ref{sec:numer_exam}.


\section{Basic definitions and theorems of Sinc methods}
\label{sec:review_sinc}

In this section,
fundamental approximation formulas
derived from the Sinc approximation
are explained with their convergence theorems.

\subsection{Sinc approximation and Sinc indefinite integration over the real axis}
The Sinc approximation is expressed as
\begin{equation}
 F(x)\approx \sum_{j=-N}^N F(jh)S(j,h)(x),\quad x\in\Rset,
 \label{Approx:Sinc-Original}
\end{equation}
where the basis function $S(j,h)(x)$
(the so-called \textit{Sinc function})
is defined by
\[
S(j,h)(x) = \frac{\sin\pi(x/h-j)}{\pi(x/h-j)},
\]
and $h$ is a step size appropriately chosen
depending on a given positive integer $N$.
The Sinc indefinite integration is derived by integrating both sides
of Eq.~\eqref{Approx:Sinc-Original} as
\begin{equation}
\int_{-\infty}^x
 F(t)\diff t
\approx \sum_{j=-N}^N F(jh)\int_{-\infty}^x S(j,h)(t)\diff t
=\sum_{j=-N}^N F(jh) J(j,h)(x),\quad x\in\Rset,
\label{eq:Sinc-indef-int}
\end{equation}
where $J(j,h)(x)$ is defined by
\begin{equation*}
J(j,h)(x)=h\left\{\frac{1}{2}+\frac{1}{\pi}\Si[\pi(x/h-j)]\right\}.
\end{equation*}
Here, $\Si(x)$ is the so-called ``sine integral'' function,
whose routine is available in some numerical libraries
(IMSL, NAG, GSL, and so on).
The approximation~\eqref{eq:Sinc-indef-int}
is called the Sinc indefinite integration.

\subsection{(Generalized) SE-Sinc approximation and SE-Sinc indefinite integration}

When the target interval $[a,\,b]$ is finite,
the Single-Exponential (SE) transformation
\begin{align*}
t &= \SEt(x) = \frac{b-a}{2}\tanh\left(\frac{x}{2}\right)+\frac{b+a}{2},
\\
x &= \SEtInv(t) = \{\SEt\}^{-1}(t) = \log\left(\frac{t-a}{b-t}\right)
\end{align*}
is frequently used with the formulas~\eqref{Approx:Sinc-Original}
and~\eqref{eq:Sinc-indef-int}.
Since this transformation maps $t\in (a,\,b)$ onto $x\in \mathbb{R}$,
we can use~\eqref{Approx:Sinc-Original} and~\eqref{eq:Sinc-indef-int} as
\begin{align}
f(t)=(f\circ\SEt)(\SEtInv(t))
&\approx\sum_{j=-N}^Nf(\tSE_j)S(j,h)(\SEtInv(t)),
\label{eq:SE-Sinc-approx}
\allowdisplaybreaks
\\
\int_a^t f(s)\diff s
=\int_{-\infty}^{\SEtInv(t)}
f(\SEt(x))\SEtDiv(x)\diff x
&\approx\sum_{j=-N}^N f(\tSE_j)\SEtDiv(jh)J(j,h)(\SEtInv(t)),
\label{eq:SE-Sinc-indef}
\end{align}
where $\tSE_j = \SEt(jh)$.
The approximations~\eqref{eq:SE-Sinc-approx}
and~\eqref{eq:SE-Sinc-indef}
are called the SE-Sinc approximation
and the SE-Sinc indefinite integration, respectively.

If $f$ is non-zero at the endpoints $t=a$ and $t=b$,
the SE-Sinc approximation does not work accurately near the endpoints,
because the right hand side of~\eqref{eq:SE-Sinc-approx}
tends to $0$ when $t\to a$ and $t\to b$.
To remedy the issue,
Stenger~\cite{stenger93:_numer} introduced
the auxiliary basis functions
$w_a(t)=(b-t)/(b-a)$ and $w_b(t)=(t-a)/(b-a)$,
and modified the approximation as
\begin{align}
f(t)\approx \ProjSE[f](t):=f(\tSE_{-N})w_a(t) + f(\tSE_{N})w_b(t) +
\sum_{j=-N}^N\TransSE[f](\tSE_j)S(j,h)(\SEtInv(t)),
\label{eq:general-SE-Sinc-approx}
\end{align}
where $\TransSE$ is defined by
$\TransSE[f](t)=f(t) - f(\tSE_{-N})w_a(t) - f(\tSE_{N})w_b(t)$.
Throughout this paper,
the formula~\eqref{eq:general-SE-Sinc-approx}
is called the generalized SE-Sinc approximation.

\subsection{(Generalized) DE-Sinc approximation and DE-Sinc indefinite integration}

Recently,
instead of the SE transformation,
the Double-Exponential (DE) transformation
\begin{align*}
t &= \DEt(x)
   = \frac{b-a}{2}\tanh\left(\frac{\pi}{2}\sinh(x)\right)+\frac{b+a}{2},\\
x &= \DEtInv(t)
   = \{\DEt\}^{-1}(t)
   = \arcsinh\left[\frac{2}{\pi}\arctanh\left(\frac{2 t - b-a}{b-a}\right)\right]
\end{align*}
has been employed by several authors~\cite{mori01:_doubl,muhammad03:_doubl,okayama1x:_improv,sugihara04:_recen,tanaka09:_funct}.
In this case,
the formulas~\eqref{eq:general-SE-Sinc-approx}
and~\eqref{eq:SE-Sinc-indef} are modified as
\begin{align}
&\!\! f(t)\approx \ProjDE[f](t):=f(\tDE_{-N})w_a(t) + f(\tDE_{N})w_b(t) +
\sum_{j=-N}^N\TransDE[f](\tDE_j)S(j,h)(\DEtInv(t)),
\label{eq:general-DE-Sinc-approx}\\
&\!\! \int_a^t f(s)\diff s
\approx\sum_{j=-N}^N f(\tDE_j)\DEtDiv(jh)J(j,h)(\DEtInv(t)),
\label{eq:DE-Sinc-indef}
\end{align}
where $\tDE_j=\DEt(jh)$ and
$\TransDE[f](t)=f(t) - f(\tDE_{-N})w_a(t) - f(\tDE_{N})w_b(t)$.
Throughout this paper,
the formulas~\eqref{eq:general-DE-Sinc-approx}
and~\eqref{eq:DE-Sinc-indef} are called
the generalized DE-Sinc approximation
and the DE-Sinc indefinite integration, respectively.

\subsection{Convergence theorems}

Here let us introduce function spaces needed
to state convergence theorems.
\begin{defn}
\label{Def:Hinf}
Let $\domD$ be a bounded and simply-connected domain
(or Riemann surface).
Then $\Hinf(\domD)$ denotes the family of functions $f$
analytic on $\domD$ such that the norm
$\|f\|_{\Hinf(\domD)}=\sup_{z\in\domD}|f(z)|$
is finite.
\end{defn}
\begin{defn}
\label{Def:LC}
Let $\alpha$ be a positive constant,
and let $\domD$ be a bounded and simply-connected domain
(or Riemann surface)
which satisfies $(a,\,b)\subset \domD$.
Then $\LC_{\alpha}(\domD)$ denotes the family of functions $f\in\Hinf(\domD)$
for which there exists a constant $L$ such that
for all $z$ in $\domD$
\begin{equation}
|f(z)|\leq L |Q(z)|^{\alpha},
 \label{Leq:LC-bounded-by-Q}
\end{equation}
where the function $Q$ is defined by $Q(z)=(z-a)(b-z)$.
\end{defn}
\begin{defn}
\label{Def:MC}
Let $\alpha$ be a constant with $0<\alpha\leq 1$, and let $\domD$
be a domain
with the same conditions as in Definition~\ref{Def:LC}.
Then $\MC_{\alpha}(\domD)$ denotes the family of functions
$f\in\Hinf(\domD)$
for which there exists a constant $M$ such that
for all $z$ in $\domD$
\begin{align*}
|f(z) - f(a)|&\leq M |z - a|^{\alpha},\\
|f(b) - f(z)|&\leq M |b - z|^{\alpha}.
\end{align*}
\end{defn}

In this paper,
$\domD$ is either of the following two domains:
\begin{align*}
\SEt(\domD_d) = \{z=\SEt(\zeta):\zeta\in\domD_d\}
\quad \text{or} \quad
\DEt(\domD_d) = \{z=\DEt(\zeta):\zeta\in\domD_d\},
\end{align*}
where $\domD_d$ is the strip domain defined by
%
$\domD_d=\{\zeta\in\mathbb{C}:|\Im\zeta|<d\}$
for a positive constant $d$
(see also Tanaka et al.~\cite[Figures~1 and~5]{tanaka09:_desinc}
for the concrete shape of the domains).
Convergence theorems for the generalized SE/DE-Sinc approximations
are described as follows.

\begin{thm}[Okayama~{\cite[Theorem~3]{okayamaar:_note}}, see also Stenger~{\cite{stenger93:_numer}}]
\label{Thm:SE-Sinc-Base}
Let $f\in\MC_{\alpha}(\SEt(\domD_d))$ for $d$ with $0<d<\pi$,
let $N$ be a positive integer, and let $h$ be selected by the formula
\begin{equation}
 h=\sqrt{\frac{\pi d}{\alpha N}}. \label{Def:h-SE}
\end{equation}
Then there exists a constant $C$ which is independent of $N$,
such that
\[
\max_{a\leq t\leq b}
\left|f(t)-\ProjSE[f](t)\right|
\leq C \sqrt{N}\exp\left({-\sqrt{\pi d \alpha N}}\right).
\]
\end{thm}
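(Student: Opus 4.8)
The plan is to reduce the claim to the standard Sinc approximation error estimate on the real line by pulling everything back through the SE transformation $\SEt$. First I would split the error into two parts: the error committed by the two auxiliary terms plus the genuine Sinc series applied to the "corrected" function $g:=\TransSE[f]$, i.e. write $f-\ProjSE[f] = \bigl(f - f(\tSE_{-N})w_a - f(\tSE_N)w_b\bigr) - \sum_{j=-N}^N g(\tSE_j)S(j,h)(\SEtInv(t)) = g - \sum_j g(\tSE_j)S(j,h)(\SEtInv(\cdot))$. So it suffices to bound $\max_{a\le t\le b}|g(t) - \sum_{j=-N}^N g(\tSE_j)S(j,h)(\SEtInv(t))|$, where by construction $g(a)=g(b)=0$.

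Next I would transplant to the real axis by setting $G = g\circ\SEt$, so that the quantity to estimate becomes $\sup_{x\in\Rset}|G(x) - \sum_{j=-N}^N G(jh)S(j,h)(x)|$. The key is to verify that $G$ belongs to the class for which the classical Sinc bound holds, namely that $G\in\Hinf(\domD_d)$ and $G$ decays like $\exp(-\alpha|x|)$ as $|x|\to\infty$ (single-exponential decay with rate $\alpha$). Analyticity of $G$ on $\domD_d$ follows from $f\in\Hinf(\SEt(\domD_d))$ together with boundedness of $w_a,w_b$ (which are analytic and bounded on $\SEt(\domD_d)$ since $a,b$ are finite); the subtraction of the two endpoint terms does not spoil analyticity. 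For the decay, one uses the Hölder-type conditions defining $\MC_\alpha$: as $x\to+\infty$, $\SEt(x)\to b$ with $b-\SEt(x)\asymp \mathrm{e}^{-x}$, and $|g(t)| = |f(t)-f(b) - (f(\tSE_{-N})-f(b))w_a(t) + \text{(higher order)}|$; the dominant contribution $|f(b)-f(t)|\le M|b-t|^\alpha$ gives $|G(x)|\lesssim \mathrm{e}^{-\alpha x}$, and the $w_a$-term is even smaller since $w_a(t)\asymp \mathrm{e}^{-x}$ there. A symmetric argument handles $x\to-\infty$ using the first $\MC_\alpha$ inequality. One must also control the additive error from replacing the finite sum $\sum_{-N}^N$ against the infinite sinc series: the tail $\sum_{|j|>N}$ is bounded by $\sum_{|j|>N}|G(jh)|\lesssim \sum_{|j|>N}\mathrm{e}^{-\alpha|j|h}\lesssim \mathrm{e}^{-\alpha N h}/( \alpha h)$.

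Then, invoking the classical Sinc-approximation theorem for functions in this single-exponentially decaying analytic class (the very result the cited Okayama/Stenger references establish), the discretization error of the infinite sinc interpolant is $\Order(\mathrm{e}^{-\pi d/h})$, and combining with the truncation tail $\Order(\mathrm{e}^{-\alpha N h}/(\alpha h))$ gives a total bound of order $(1/h)\bigl(\mathrm{e}^{-\pi d/h} + \mathrm{e}^{-\alpha N h}\bigr)$ up to constants. Substituting the balancing choice $h=\sqrt{\pi d/(\alpha N)}$ from~\eqref{Def:h-SE} equalizes the two exponents at $\sqrt{\pi d\alpha N}$ and turns the prefactor $1/h$ into $\sqrt{\alpha N/(\pi d)}=\Order(\sqrt{N})$, yielding $\max_{a\le t\le b}|f(t)-\ProjSE[f](t)|\le C\sqrt{N}\exp(-\sqrt{\pi d\alpha N})$, as claimed. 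The main obstacle I anticipate is the bookkeeping in the decay estimate for $G$: one has to be careful that, near each endpoint, the two auxiliary linear terms $f(\tSE_{\mp N})w_{a/b}$ — whose coefficients $f(\tSE_{\mp N})$ are merely bounded, not small — are multiplied by factors $w_{a/b}(t)$ that themselves decay exponentially in $x$, so that after transplantation $G$ genuinely has the full rate $\alpha$ and no loss of exponential order occurs; getting uniform constants here (independent of $N$) is the delicate point, but it is exactly what the $\MC_\alpha$ and $\Hinf$ hypotheses are designed to supply.
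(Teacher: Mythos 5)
Your overall strategy (transplant to the real line via $\SEt$, classical sinc interpolation error plus truncation tail, balance the two exponents with the prescribed $h$ to get the $\sqrt{N}$ prefactor from $1/h$) is the standard and correct one, and the paper itself only cites this theorem rather than proving it. But there is a genuine error at the heart of your decay analysis. You claim that $g:=\TransSE[f]$ satisfies $g(a)=g(b)=0$ and hence that $G=g\circ\SEt$ decays like $\mathrm{e}^{-\alpha|x|}$. This is false: the auxiliary terms use the \emph{sampled} values $f(\tSE_{\pm N})$, not the endpoint values $f(a)$, $f(b)$, so $g(a)=f(a)-f(\tSE_{-N})$ and $g(b)=f(b)-f(\tSE_{N})$, which are nonzero in general. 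Consequently $G(x)$ tends to these nonzero constants as $x\to\mp\infty$ (near $t=b$ the factor $w_b(t)\to 1$ does not decay --- only $w_a$ does, so the term $f(\tSE_N)w_b(t)$ you dismiss as ``higher order'' is in fact the dominant non-decaying part), so $G$ is not in the single-exponentially decaying class, the classical theorem cannot be applied to it, and your truncation-tail estimate $\sum_{|j|>N}|G(jh)|\lesssim \mathrm{e}^{-\alpha Nh}/(\alpha h)$ fails for the same reason. Handling precisely this discrepancy is the point of the ``generalized'' approximation in the cited reference.

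The repair is as follows. Introduce $g_0(t)=f(t)-f(a)w_a(t)-f(b)w_b(t)$, which genuinely vanishes at the endpoints; writing $g_0=[f-f(a)]w_a+[f-f(b)]w_b$ and using the two $\MC_{\alpha}$ inequalities together with $w_a(\SEt(x))\asymp\mathrm{e}^{-x}$ as $x\to+\infty$ (and symmetrically), and $\alpha\le 1$, shows that $g_0\circ\SEt$ decays at rate $\alpha$ throughout the strip; your argument then applies verbatim to $g_0$ and yields $O(\sqrt{N}\,\mathrm{e}^{-\sqrt{\pi d\alpha N}})$. Next write $g=g_0+e_a w_a+e_b w_b$ with $e_a=f(a)-f(\tSE_{-N})$ and $e_b=f(b)-f(\tSE_N)$; the H\"{o}lder condition and $|\tSE_{N}-b|=O(\mathrm{e}^{-Nh})$ give $|e_a|,|e_b|\le C\mathrm{e}^{-\alpha Nh}=C\mathrm{e}^{-\sqrt{\pi d\alpha N}}$. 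The residual error $e_a\bigl\{w_a-\sum_j w_a(\tSE_j)S(j,h)(\SEtInv(\cdot))\bigr\}$ (and likewise for $e_b$) is then bounded by $|e_a|\bigl(1+\sup_{x}\sum_j|S(j,h)(x)|\bigr)=O(\mathrm{e}^{-\sqrt{\pi d\alpha N}}\log N)$ using the Lebesgue-constant bound of Lemma~\ref{Lem:Sinc-Real-Sum}, which is absorbed into $C\sqrt{N}\,\mathrm{e}^{-\sqrt{\pi d\alpha N}}$. Without this extra step your proof only establishes the classical endpoint-value version of the theorem, not the stated one.
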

\begin{thm}[Okayama~{\cite[Theorem~6]{okayamaar:_note}}]
\label{Thm:DE-Sinc-Base}
Let $f\in\MC_{\alpha}(\DEt(\domD_d))$ for $d$ with $0<d<\pi/2$,
let $N$ be a positive integer, and let $h$ be selected by the formula
\begin{equation}
h=\frac{\log(2 d N /\alpha)}{N}. \label{Def:h-DE}
\end{equation}
Then there exists a constant $C$ which is independent of $N$,
such that
\[
\max_{a\leq t\leq b}
\left|f(t)-\ProjDE[f](t)\right|
\leq C \exp\left\{\frac{-\pi d N}{\log(2 d N/\alpha)}\right\}.
\]
\end{thm}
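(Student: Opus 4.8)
The plan is to follow the same route as for Theorem~\ref{Thm:SE-Sinc-Base}, using the DE transformation instead of the SE one and then tracking how the resulting double-exponential endpoint decay is balanced against the logarithmic mesh size~\eqref{Def:h-DE}. The starting point is the identity, immediate from the definitions of $\ProjDE$ and $\TransDE$,
\[
f(t)-\ProjDE[f](t)=\TransDE[f](t)-\sum_{j=-N}^{N}\TransDE[f](\tDE_j)\,S(j,h)(\DEtInv(t)).
\]
Since $\TransDE[f]$ takes the (generally nonzero) values $f(a)-f(\tDE_{-N})$ and $f(b)-f(\tDE_{N})$ at $t=a$ and $t=b$, the right-hand side cannot be handled by a single cardinal-series estimate. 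I would therefore introduce $g:=f-f(a)w_a-f(b)w_b$, which \emph{does} vanish at both endpoints, substitute $\TransDE[f](\tDE_j)=g(\tDE_j)+[f(a)-f(\tDE_{-N})]w_a(\tDE_j)+[f(b)-f(\tDE_{N})]w_b(\tDE_j)$ into the sum above, and collect terms to obtain
\[
f-\ProjDE[f]=\Bigl(g-\sum_{j=-N}^{N}g(\tDE_j)S(j,h)(\DEtInv(\cdot))\Bigr)+[f(a)-f(\tDE_{-N})]\,E_a+[f(b)-f(\tDE_{N})]\,E_b,
\]
where $E_a:=w_a-\sum_{j=-N}^{N}w_a(\tDE_j)S(j,h)(\DEtInv(\cdot))$ and $E_b$ is the analogous quantity for $w_b$. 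The point of this splitting is that the only genuinely ``bad'' behaviour---a cardinal series cannot reproduce the nonzero boundary value of $w_a$ at $a$ (nor that of $w_b$ at $b$), so $E_a$ and $E_b$ do not tend to $0$ at the endpoints---is confined to $E_a,E_b$, which come multiplied by coefficients that the DE transformation makes exponentially small in $N$.

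Indeed, $f\in\MC_{\alpha}(\DEt(\domD_d))$ gives $|f(a)-f(\tDE_{-N})|\le M(\tDE_{-N}-a)^{\alpha}$ and $|f(b)-f(\tDE_{N})|\le M(b-\tDE_{N})^{\alpha}$, and from $b-\DEt(x)=\tfrac{b-a}{2}\{1-\tanh(\tfrac{\pi}{2}\sinh x)\}\le(b-a)\exp(-\pi\sinh x)$ together with $h=\log(2dN/\alpha)/N$ (so that $\exp(Nh)=2dN/\alpha$) one obtains $b-\tDE_{N}\le C\exp(-\pi dN/\alpha)$, and likewise for $\tDE_{-N}-a$; hence both coefficients are $\Order(\exp(-\pi dN))$. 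Since $|w_a|,|w_b|\le1$ on $[a,b]$ and $\sup_{x\in\Rset}\sum_{|j|\le N}|S(j,h)(x)|=\Order(\log N)$, the quantities $E_a$ and $E_b$ are $\Order(\log N)$ uniformly on $[a,b]$; therefore the last two terms of the splitting are $\Order(\exp(-\pi dN)\log N)$, which is dominated by $\exp(-\pi dN/\log(2dN/\alpha))$ for all large $N$ and absorbed into the constant for the remaining finitely many $N$.

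It remains to estimate the first term, the plain truncated DE-Sinc interpolation error of $g$. I would first verify that $g\in\LC_{\alpha}(\DEt(\domD_d))$: writing $g(z)=[f(z)-f(a)]-[f(b)-f(a)]w_b(z)$ and using the $\MC_{\alpha}$-bounds with $0<\alpha\le1$ gives $|g(z)|\le C|z-a|^{\alpha}$ near $a$ and $|g(z)|\le C|b-z|^{\alpha}$ near $b$, while $g\in\Hinf(\DEt(\domD_d))$ and $|Q|$ is bounded below off any neighbourhood of $\{a,b\}$ on the \emph{bounded} domain $\DEt(\domD_d)$; hence $|g(z)|\le L|Q(z)|^{\alpha}$ holds throughout. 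Consequently $G:=g\circ\DEt$ is analytic and bounded on the strip $\domD_d$, and $|G(x+\imnum y)|\le L|Q(\DEt(x+\imnum y))|^{\alpha}$ decays double-exponentially as $x\to\pm\infty$, uniformly for $|y|\le d$---this is where the hypothesis $d<\pi/2$ is used, to keep $\DEt$ well behaved up to $\partial\domD_d$ (see Tanaka et al.~\cite{tanaka09:_desinc} for the geometry of $\DEt(\domD_d)$)---so $G$ lies in the Hardy class on $\domD_d$ for which the classical cardinal-series estimate holds. Writing $x=\DEtInv(t)$ and splitting
\[
g(t)-\sum_{j=-N}^{N}g(\tDE_j)S(j,h)(x)=\Bigl(G(x)-\sum_{j=-\infty}^{\infty}G(jh)S(j,h)(x)\Bigr)+\sum_{|j|>N}G(jh)S(j,h)(x),
\]
the first (discretization) term is bounded, by the classical Sinc interpolation estimate over $\Rset$ (see, e.g., Stenger~\cite{stenger93:_numer}), by $C\exp(-\pi d/h)=C\exp(-\pi dN/\log(2dN/\alpha))$, while the second (truncation) term is at most $\sum_{|j|>N}|G(jh)|\le 2L\sum_{j>N}|Q(\DEt(jh))|^{\alpha}$, which by the decay estimate above is $\Order(N\exp(-\pi dN))$ and hence negligible. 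Summing the three contributions---with the discretization error of $g$ as the dominant one---gives the claimed bound.

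The hard part will be the regularity-and-decay bookkeeping just sketched: one must check carefully, for $d<\pi/2$, that $G=g\circ\DEt$ really belongs to the Hardy class demanded by the classical cardinal-series estimate---equivalently, that the double-exponential decay of $Q\circ\DEt$ along horizontal lines persists uniformly up to $\Im\zeta=\pm d$---and one must make the crude bounds $b-\DEt(x)\lesssim\exp(-\pi\sinh x)$ and $\sum_{j>N}|Q(\DEt(jh))|^{\alpha}\lesssim N\exp(-\pi dN)$ precise. Everything else is the routine decomposition ``error $=$ discretization $+$ truncation'', and the fact that the $\exp(-\pi d/h)$ discretization error must be balanced against the (much smaller) double-exponentially decaying truncation error is precisely what forces the logarithmic choice~\eqref{Def:h-DE}.
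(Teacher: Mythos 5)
The paper does not prove Theorem~\ref{Thm:DE-Sinc-Base}; it imports it verbatim from Okayama~\cite{okayamaar:_note}, so there is no in-paper proof to compare against. Your argument --- subtract the linear boundary interpolant to get a function in $\LC_{\alpha}(\DEt(\domD_d))$, bound its cardinal-series error by the usual discretization-plus-truncation split with the balance $\exp(-\pi d/h)$ versus $\exp(-\pi d N)$ forced by the choice~\eqref{Def:h-DE}, and check that the endpoint-correction terms carry coefficients of size $\Order(\exp(-\pi d N))$ against factors of size $\Order(\log N)$ --- is correct and is essentially the standard proof given in that reference.
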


Convergence theorems for the SE/DE-Sinc indefinite integration
have also been given as below.
\begin{thm}[Okayama et al.~{\cite[Theorem~2.9]{okayama1x:_error}}]
\label{Thm:SE-Sinc-indef}
Let $(fQ)\in\LC_{\alpha}(\SEt(\domD_d))$ for $d$ with $0<d<\pi$,
let $N$ be a positive integer, and let $h$ be selected
by the formula~\eqref{Def:h-SE}.
Then there exists a constant $C$ which is independent of $N$,
such that
\[
\max_{a\leq t\leq b}
\left|
\int_a^t f(s)\diff s
- \sum_{j=-N}^N f(\tSE_j)\SEtDiv(jh)J(j,h)(\SEtInv(t))
\right|
\leq C \exp\left({-\sqrt{\pi d \alpha N}}\right).
\]
\end{thm}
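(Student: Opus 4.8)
The plan is to reduce the assertion to the known error estimate for the Sinc indefinite integration on the whole real line, and then to verify that the transformed integrand lies in the function class that estimate requires. Concretely, I would first apply the SE change of variables $s=\SEt(x)$, which rewrites $\int_a^t f(s)\diff s = \int_{-\infty}^{\SEtInv(t)} g(x)\diff x$ with $g(x):=f(\SEt(x))\SEtDiv(x)$, under which the collocation sum becomes exactly $\sum_{j=-N}^N g(jh)J(j,h)(\SEtInv(t))$, as in~\eqref{eq:SE-Sinc-indef}. Since $\SEtInv$ maps $(a,b)$ bijectively onto $\Rset$ and both sides of the identity extend continuously to $t=a$ and $t=b$ (where $J(j,h)(\mp\infty)$ equals $0$ and $h$, respectively), $\max_{a\leq t\leq b}$ of the error coincides with $\sup_{x\in\Rset}$ of the real-line indefinite-integration error $\left|\int_{-\infty}^x g(\sigma)\diff\sigma - \sum_{j=-N}^N g(jh)J(j,h)(x)\right|$.

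Second, I would verify the hypotheses of the real-line estimate for $g$. A direct computation gives the identity $\SEtDiv(x)=Q(\SEt(x))/(b-a)$ (both sides equal $(b-a)\mathrm{e}^{x}/(\mathrm{e}^{x}+1)^2$), hence $g=(fQ)\circ\SEt/(b-a)$; therefore $g$ is analytic on $\domD_d$ and, by the hypothesis $(fQ)\in\LC_{\alpha}(\SEt(\domD_d))$, satisfies $|g(\zeta)|\leq \frac{L}{b-a}|Q(\SEt(\zeta))|^{\alpha}$ for all $\zeta\in\domD_d$. Using $0<d<\pi$ to stay away from the singularities of $\SEt$, one checks that $|Q(\SEt(\zeta))|$ is bounded on the closed strip and is $\Order(\mathrm{e}^{-|\Re\zeta|})$ uniformly in $\Im\zeta$; thus $g$ decays like $\mathrm{e}^{-\alpha|\Re\zeta|}$ along every horizontal line in $\domD_d$, with a uniformly bounded $L^{1}$-norm there, which is precisely what the Sinc indefinite integration estimate over $\Rset$ asks for. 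Applying that estimate with $h$ chosen by~\eqref{Def:h-SE} produces two contributions: an \emph{analyticity} part of order $\mathrm{e}^{-\pi d/h}=\mathrm{e}^{-\sqrt{\pi d\alpha N}}$ and a \emph{discretization} part of order $\mathrm{e}^{-\alpha hN}=\mathrm{e}^{-\sqrt{\pi d\alpha N}}$, whose sum is the claimed bound.

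If the real-line estimate is to be established from scratch, I would split the error by inserting the full, untruncated Sinc indefinite integration: $\int_{-\infty}^x g - \sum_{j=-N}^N g(jh)J(j,h)(x) = \bigl[\int_{-\infty}^x g - \sum_{j=-\infty}^{\infty} g(jh)J(j,h)(x)\bigr] - \sum_{|j|>N} g(jh)J(j,h)(x)$. Since $J(j,h)(x)=\int_{-\infty}^x S(j,h)(\sigma)\diff\sigma$, the bracketed term equals $\int_{-\infty}^x\bigl[g(\sigma)-\sum_{j=-\infty}^{\infty}g(jh)S(j,h)(\sigma)\bigr]\diff\sigma$, i.e. the integral of the classical Sinc-interpolation error of $g$; the standard contour-integral (residue) representation of that error bounds the integrand pointwise by a constant times $\mathrm{e}^{-\pi d/h}$ times an integrable function of $\sigma$, the exponential decay of $g$ near $\partial\domD_d$ making the bracket $\Order(\mathrm{e}^{-\pi d/h})$ uniformly in $x$. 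For the tail, $|J(j,h)(x)|\leq c_J h$ for an absolute constant $c_J$ (as $|\tfrac12+\tfrac1\pi\Si(\cdot)|$ is bounded) and $|g(jh)|\leq C'\mathrm{e}^{-\alpha h|j|}$ yield $\sum_{|j|>N} h\,\mathrm{e}^{-\alpha h|j|}=\Order(\mathrm{e}^{-\alpha hN})$; balancing the two exponents via~\eqref{Def:h-SE} gives the stated rate, the factors $1/(1-\mathrm{e}^{-2\pi d/h})$ and $h/(1-\mathrm{e}^{-\alpha h})$ staying bounded.

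The step I expect to be the main obstacle is the uniform-in-$x$ control of the analyticity part: one integrates a pointwise interpolation-error bound over an unbounded interval, so the argument closes only because $g$ — equivalently $(fQ)\circ\SEt$ — decays exponentially along every horizontal line inside the strip, a fact that must be extracted carefully from the definition of $\LC_{\alpha}$ together with the identity $\SEtDiv=Q\circ\SEt/(b-a)$ and the restriction $d<\pi$. Once that uniform decay is in hand, the remaining estimates, and in particular the final balancing of the two exponential terms, are routine.
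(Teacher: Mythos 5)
This theorem is not proved in the paper at all: it is imported verbatim as Theorem~2.9 of the cited reference \cite{okayama1x:_error}, so there is no in-paper argument to compare against. Your reconstruction is nevertheless correct and follows the standard route used in that reference: the identity $\SEtDiv = Q\circ\SEt/(b-a)$ converts the hypothesis $(fQ)\in\LC_{\alpha}(\SEt(\domD_d))$ into uniform $\mathrm{e}^{-\alpha|\Re\zeta|}$ decay of the transformed integrand on the strip, after which the error splits into the untruncated sampling error $\Order(\mathrm{e}^{-\pi d/h})$ and the truncation error $\Order(\mathrm{e}^{-\alpha Nh})$, balanced by the choice \eqref{Def:h-SE}. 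I see no gap; the one step you flag as delicate (uniform-in-$x$ control of the analyticity part) is indeed where the contour-integral representation and the decay of $(fQ)\circ\SEt$ must be combined, exactly as you describe.
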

\begin{thm}[Okayama et al.~{\cite[Theorem~2.16]{okayama1x:_error}}]
\label{Thm:DE-Sinc-indef}
Let $(fQ)\in\LC_{\alpha}(\DEt(\domD_d))$ for $d$ with $0<d<\pi/2$,
let $N$ be a positive integer, and let $h$ be selected
by the formula~\eqref{Def:h-DE}.
Then there exists a constant $C$ which is independent of $N$,
such that
\begin{align*}
\max_{a\leq t\leq b}
\left|
\int_a^t f(s)\diff s
- \sum_{j=-N}^N f(\tDE_j)\DEtDiv(jh)J(j,h)(\DEtInv(t))
\right|
\leq C
\frac{\log(2dn/\alpha)}{N}
 \exp\left\{\frac{-\pi d N}{\log(2 d N/\alpha)}\right\}.
\end{align*}
\end{thm}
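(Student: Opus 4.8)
The plan is to reduce the estimate, via the change of variables $s=\DEt(x)$, to the ordinary (real-axis) Sinc indefinite integration applied to the transformed integrand
\[
F(x)=f(\DEt(x))\,\DEtDiv(x),
\]
and then to verify that $F$ belongs to the function class for which a sharp error estimate for the real-axis Sinc indefinite integration is available. For $a<t<b$ put $u=\DEtInv(t)$; since $|f(s)|\le L|Q(s)|^{\alpha-1}$ is integrable near $s=a$, one has $\int_a^t f(s)\diff s=\int_{-\infty}^{u}F(x)\diff x$, and because $t\mapsto\DEtInv(t)$ maps $(a,b)$ onto $\Rset$, the quantity to be bounded equals, by continuity in $t$,
\[
\sup_{u\in\Rset}\left|\int_{-\infty}^{u}F(x)\diff x-\sum_{j=-N}^{N}F(jh)J(j,h)(u)\right| .
\]

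The technical heart is to show that $F$ is analytic on $\domD_d$ and decays double-exponentially there, at a rate governed by $\alpha$. Analyticity holds because the restriction $0<d<\pi/2$ keeps $\tfrac{\pi}{2}\sinh\zeta$ away from the poles of $\tanh$ for $\zeta\in\domD_d$, so $\DEt$, and hence $\DEtDiv$, is analytic on $\domD_d$, while $f=(fQ)/Q$ is analytic on $\DEt(\domD_d)$ since $Q$ has no zero there (the endpoints $a,b$ correspond to $\Re\zeta\to\pm\infty$ and are not attained). For the decay one uses the endpoint asymptotics of $\DEt$ and $\DEtDiv$: as $\Re\zeta\to\pm\infty$ within $\domD_d$, both $|Q(\DEt(\zeta))|$ and $|\DEtDiv(\zeta)|$ tend to $0$ double-exponentially at the same rate, and combining these with $|f(\DEt(\zeta))|\le L|Q(\DEt(\zeta))|^{\alpha-1}$ gives
\[
|F(\zeta)|\le C_d\,\mathrm{e}^{|\Re\zeta|}\exp\!\left(-\tfrac{\alpha\pi}{2}(\cos d)\,\mathrm{e}^{|\Re\zeta|}\right),\qquad\zeta\in\domD_d .
\]
In particular $F\in\Hinf(\domD_d)$, $F$ is absolutely integrable along $\partial\domD_d$ with a bound independent of $N$, and $F$ decays double-exponentially on $\Rset$ at a rate proportional to $\alpha$. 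This is the step I expect to be the main obstacle: one must check that the double-exponential decay of the Jacobian $\DEtDiv$ overpowers the possible blow-up of $f$ toward the endpoints when $0<\alpha<1$ (which it does, but only because the resulting rate $\tfrac{\alpha\pi}{2}\cos d$ stays positive), and this must hold uniformly over the whole strip $\domD_d$, since analyticity on $\domD_d$ (not merely decay on $\Rset$) is what drives the exponential convergence.

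With $F$ in this class, it remains to invoke the fundamental error estimate for the Sinc indefinite integration on $\Rset$ (whose proof rests on a contour-integral representation of the error over $\partial\domD_d$, producing the factor $\mathrm{e}^{-\pi d/h}$ from $1/\sinh(\pi d/h)$) and to split
\[
\int_{-\infty}^{u}F\diff x-\sum_{j=-N}^{N}F(jh)J(j,h)(u)
=\left(\int_{-\infty}^{u}F\diff x-\sum_{j=-\infty}^{\infty}F(jh)J(j,h)(u)\right)+\sum_{|j|>N}F(jh)J(j,h)(u).
\]
The first (discretization) term is bounded, by that estimate, by $C_1\,h\,\mathrm{e}^{-\pi d/h}$ times $\int_{\partial\domD_d}|F(\zeta)|\,|\divv\zeta|$; the factor $h=\log(2dN/\alpha)/N$ appearing in the statement originates here. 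For the second (truncation) term one uses $|J(j,h)(u)|\le C_J h$ for all $u\in\Rset$ together with the decay bound on $\Rset$, obtaining $\sum_{|j|>N}|F(jh)J(j,h)(u)|\le C_2\,h\sum_{j>N}\mathrm{e}^{jh}\exp(-\tfrac{\alpha\pi}{2}\mathrm{e}^{jh})$, and the tail sum is controlled by comparison with $\int_N^{\infty}\mathrm{e}^{xh}\exp(-\tfrac{\alpha\pi}{2}\mathrm{e}^{xh})\diff x=\Order(h^{-1}\exp(-\tfrac{\alpha\pi}{2}\mathrm{e}^{Nh}))$. Finally, substituting $h=\log(2dN/\alpha)/N$ from~\eqref{Def:h-DE}, so that $\mathrm{e}^{-\pi d/h}=\mathrm{e}^{-\pi dN/\log(2dN/\alpha)}$ and $\mathrm{e}^{Nh}=2dN/\alpha$, the discretization term is $\le C_1'\,\tfrac{\log(2dN/\alpha)}{N}\,\mathrm{e}^{-\pi dN/\log(2dN/\alpha)}$ while the truncation term is $\le C_2'\,\mathrm{e}^{-\pi dN}$, which is negligible against it; adding the two yields the asserted bound.
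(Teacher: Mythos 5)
This theorem is not proved in the paper at all: it is imported verbatim as Theorem~2.16 of the cited reference (Okayama et al.), so there is no in-paper proof to compare against. Your argument is, in substance, the standard proof of that cited result and it is correct: the reduction to the real-axis Sinc indefinite integration of $F=(f\circ\DEt)\,\DEtDiv$, the verification that the double-exponential decay of $\DEtDiv$ dominates the blow-up $|f|\le L|Q|^{\alpha-1}$ uniformly on the strip $\domD_d$ (the genuinely delicate point, which you flag and resolve), the split into a discretization term and a tail term, and—crucially—the correct attribution of the prefactor $h=\log(2dN/\alpha)/N$ to the discretization error of the indefinite-integration formula (which is $\Order(h\,\mathrm{e}^{-\pi d/h})$ rather than $\Order(\mathrm{e}^{-\pi d/h})$, owing to the cancellation in $\int_{-\infty}^{x}\sin(\pi t/h)/(\zeta-t)\diff t=\Order(h/d)$), with the truncation term $\Order(\mathrm{e}^{-\pi dN})$ being negligible, all match the cited proof.
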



\section{Numerical methods}
\label{sec:numer_schemes}

In this section,
four numerical methods to be considered in this paper are described.
First three methods are existing ones:
the SE-Sinc-Nystr\"{o}m method (Section~\ref{subsec:SE-Sinc-Nystroem}),
the DE-Sinc-Nystr\"{o}m method (Section~\ref{subsec:DE-Sinc-Nystroem}),
and
the SE-Sinc-collocation method (Section~\ref{subsec:SE-Sinc-collocation}).
Fourth one is the newly-proposed method:
the DE-Sinc-collocation method (Section~\ref{subsec:DE-Sinc-collocation}).

\subsection{SE-Sinc-Nystr\"{o}m method}
\label{subsec:SE-Sinc-Nystroem}

As for the functions in Eq.~\eqref{Vol},
let $y_i(t)$ and $g_i(t)$ be each element
of the vectors $\mathbd{y}(t)$ and $\mathbd{g}(t)$, respectively,
and let $k_{ij}(t)$ be $(i,j)$-th element of the matrix $K(t)$.
Assume the following conditions:
\begin{quote}\vspace*{-0.4\baselineskip}
\begin{itemize}
 \item[(SE1)] $k_{ij}Q\in\LC_{\alpha}(\SEt(\domD_d))$ $(i=1,\,\ldots,\,n,\,\,\,j=1,\,\ldots,\,n)$,
 \item[(SE2)] $g_i Q\in\LC_{\alpha}(\SEt(\domD_d))$ $(i=1,\,\ldots,\,n)$,
 \item[(SE3)] $y_i\in\Hinf(\SEt(\domD_d))$ $(i=1,\,\ldots,\,n)$,
\end{itemize}\vspace*{-0.4\baselineskip}
\end{quote}
and define $h$ as Eq.~\eqref{Def:h-SE}.
Under those assumptions,
the integral in Eq.~\eqref{Vol} can be approximated by
the SE-Sinc indefinite integration~\eqref{eq:SE-Sinc-indef},
and we have the new approximated equation:
\begin{align}
\yn(t) 
&=\mathbd{r} +
\sum_{j=-N}^N
\left\{\mathbd{g}(\tSE_j)
+K(\tSE_j) \yn(\tSE_j) \right\}\SEtDiv(jh)J(j,h)(\SEtInv(t)).
\label{Def-UsolSE}
\end{align}
In order to determine the approximated solution $\yn$,
we have to obtain the unknown coefficients
on the right hand side in Eq.~\eqref{Def-UsolSE}, i.e.,
\begin{align*}
\mathbd{Y}^{\textSE}
&=[y_1^{(N)}(\tSE_{-N}),\,\ldots,\,y_1^{(N)}(\tSE_N),\,
y_2^{(N)}(\tSE_{-N}),\,\ldots,\,y_2^{(N)}(\tSE_N),\,\ldots,\,
y_n^{(N)}(\tSE_N)
]^{\trans},
\end{align*}
which is a column vector of order $(2N+1)\cdot n$
(notice that $n$ is the number of the system of equations,
and $N$ is the number appearing in $\sum$).
To this end,
let us discretize Eq.~\eqref{Def-UsolSE}
at $(2N+1)$ sampling points: $t = \tSE_i$ ($i=-N,\,\ldots,\,N$),
and derive the system of linear equations.
Let us introduce some notation here.
Let $\sigma_k=1/2+\Si(\pi k)/\pi$,
and let $I^{(-1)}_N$ be a $(2N+1)\times(2N+1)$ matrix defined by
\begin{equation*}
I^{(-1)}_N=[\sigma_{i-j}],\quad i,\,j=-N,\,\ldots,\,N.
\end{equation*}
Let $I_N$ and $I_n$ be identity matrices
of order $(2N+1)$ and $n$, respectively.
Let $D_N^{\textSE}$ and $K_{ij}^{\textSE}$ be
$(2N+1)\times (2N+1)$ diagonal matrices defined by
\begin{align*}
D_N^{\textSE}&=\diag[\SEtDiv(-Nh),\,\ldots,\,\SEtDiv(Nh)],\\
K_{ij}^{\textSE}&=\diag[k_{ij}(\tSE_{-N}),\,\ldots,\,k_{ij}(\tSE_N)],
\end{align*}
and let $[K_{ij}^{\textSE}]$ be an $n\times n$ block of
the matrices $K_{ij}^{\textSE}$.
Furthermore, let $\mathbd{R}$ and $\mathbd{G}^{\textSE}$ be
column vectors of order $(2N+1)\cdot n$ defined by
\begin{align*}
\mathbd{R}&=[r_1,\,\ldots,\,r_1,\,r_2,\,\ldots,\,r_2,\,\ldots,\,r_n]^{\trans},\\
\mathbd{G}^{\textSE}&=[g_1(\tSE_{-N}),\,\ldots,\,g_1(\tSE_N),\,
g_2(\tSE_{-N}),\,\ldots,\,g_2(\tSE_{N}),\,\ldots,\,g_n(\tSE_N)]^{\trans}.
\end{align*}
Then the system of equations to be solved is written as
\begin{align}
 (I_n\otimes I_N - I_n\otimes \{h I^{(-1)}_N D_N^{\textSE}\}[K_{ij}^{\textSE}])
\mathbd{Y}^{\textSE}
 = I_n\otimes \{h I^{(-1)}_N D_N^{\textSE}\}\mathbd{G}^{\textSE}+\mathbd{R},
 \label{linear-eq-SE}
\end{align}
where ``$\otimes$'' denotes the Kronecker product.
By solving the system~\eqref{linear-eq-SE},
the approximated solution $\yn$ is determined
by Eq.~\eqref{Def-UsolSE}.
This procedure is the SE-Sinc-Nystr\"{o}m method.

\subsection{DE-Sinc-Nystr\"{o}m method}
\label{subsec:DE-Sinc-Nystroem}

The important difference from the previous method is
the variable transformation; the SE transformation is replaced with
the DE transformation here.
Assume the following conditions:
\begin{quote}\vspace*{-0.4\baselineskip}
\begin{itemize}
 \item[(DE1)] $k_{ij}Q\in\LC_{\alpha}(\DEt(\domD_d))$ $(i=1,\,\ldots,\,n,\,\,\,j=1,\,\ldots,\,n)$,
 \item[(DE2)] $g_i Q\in\LC_{\alpha}(\DEt(\domD_d))$ $(i=1,\,\ldots,\,n)$,
 \item[(DE3)] $y_i\in\Hinf(\DEt(\domD_d))$ $(i=1,\,\ldots,\,n)$,
\end{itemize}\vspace*{-0.4\baselineskip}
\end{quote}
and define $h$ as Eq.~\eqref{Def:h-DE}.
Under those assumptions,
the integral in Eq.~\eqref{Vol} can be approximated by
the DE-Sinc indefinite integration~\eqref{eq:DE-Sinc-indef},
and we have the new approximated equation:
\begin{align}
\yn(t) 
&=\mathbd{r} +
\sum_{j=-N}^N
\left\{\mathbd{g}(\tDE_j)
+K(\tDE_j) \yn(\tDE_j) \right\}\DEtDiv(jh)J(j,h)(\DEtInv(t)).
\label{Def-UsolDE}
\end{align}
In order to determine the approximated solution $\yn$,
we have to obtain the unknown coefficients:
\begin{align*}
\mathbd{Y}^{\textDE}
&=[y_1^{(N)}(\tDE_{-N}),\,\ldots,\,y_1^{(N)}(\tDE_N),\,
y_2^{(N)}(\tDE_{-N}),\,\ldots,\,y_2^{(N)}(\tDE_N),\,\ldots,\,
y_n^{(N)}(\tDE_N)
]^{\trans},
\end{align*}
which is a column vector of order $(2N+1)\cdot n$.
To this end,
let us discretize Eq.~\eqref{Def-UsolDE}
at $(2N+1)$ sampling points: $t = \tDE_i$ ($i=-N,\,\ldots,\,N$),
and derive the system of linear equations.
Let $D_N^{\textDE}$ and $K_{ij}^{\textDE}$ be
$(2N+1)\times (2N+1)$ diagonal matrices defined by
\begin{align*}
D_N^{\textDE}&=\diag[\DEtDiv(-Nh),\,\ldots,\,\DEtDiv(Nh)],\\
K_{ij}^{\textDE}&=\diag[k_{ij}(\tDE_{-N}),\,\ldots,\,k_{ij}(\tDE_N)],
\end{align*}
and let $[K_{ij}^{\textDE}]$ be an $n\times n$ block of
the matrices $K_{ij}^{\textDE}$.
Furthermore, let $\mathbd{G}^{\textDE}$ be
a column vector of order $(2N+1)\cdot n$ defined by
\begin{align*}
\mathbd{G}^{\textDE}&=[g_1(\tDE_{-N}),\,\ldots,\,g_1(\tDE_N),\,
g_2(\tDE_{-N}),\,\ldots,\,g_2(\tDE_{N}),\,\ldots,\,g_n(\tDE_N)]^{\trans}.
\end{align*}
Then the system of linear equations to be solved is written as
\begin{align}
 (I_n\otimes I_N - I_n\otimes \{h I^{(-1)}_N D_N^{\textDE}\}[K_{ij}^{\textDE}])
\mathbd{Y}^{\textDE}
 = I_n\otimes \{h I^{(-1)}_N D_N^{\textDE}\}\mathbd{G}^{\textDE}+\mathbd{R}.
 \label{linear-eq-DE}
\end{align}
By solving the system~\eqref{linear-eq-DE},
the approximated solution $\yn$ is determined
by Eq.~\eqref{Def-UsolDE}.
This procedure is the DE-Sinc-Nystr\"{o}m method.

\subsection{SE-Sinc-collocation method}
\label{subsec:SE-Sinc-collocation}

Stenger~\cite{stenger93:_numer} developed
the following SE-Sinc-collocation method
independently of Nurmuhammad et al.~\cite{nurmuhammad05:_numer}
(actually more than 10 years before),
but below we find that
it is strongly related to the SE-Sinc-Nystr\"{o}m method
described in Section~\ref{subsec:SE-Sinc-Nystroem}.
Assume the following conditions:
\begin{quote}\vspace*{-0.4\baselineskip}
\begin{itemize}
 \item[(SE1)] $k_{ij}Q\in\LC_{\alpha}(\SEt(\domD_d))$ $(i=1,\,\ldots,\,n,\,\,\,j=1,\,\ldots,\,n)$,
 \item[(SE2)] $g_iQ\in\LC_{\alpha}(\SEt(\domD_d))$ $(i=1,\,\ldots,\,n)$,
 \item[(SE4)] $y_i\in\MC_{\alpha}(\SEt(\domD_d))$ $(i=1,\,\ldots,\,n)$,
\end{itemize}\vspace*{-0.4\baselineskip}
\end{quote}
and define $h$ as Eq.~\eqref{Def:h-SE}.
Let $\mathbd{Y}$ be the solution
of the system of linear equations~\eqref{linear-eq-SE},
and let us write it as
\begin{equation}
\mathbd{Y}=[y_{1,-N},\,y_{1,-N+1},\,\ldots,\,y_{1,N},\,
y_{2,-N},\,y_{2,-N+1},\,\ldots,\,y_{2,N},\,\ldots,\,y_{n,N}
]^{\trans}.
\label{eq:Def-mathbd-Y}
\end{equation}
Then the approximated solution
$\tyn(t)=[\tilde{y}_1(t),\,\ldots,\,\tilde{y}_n(t)]^{\trans}$
is given by
\begin{align}
\tilde{y}^{(N)}_i(t)
=y_{i,-N}w_a(t)+y_{i,N}w_b(t)
+ \sum_{j=-N}^N
\bigl\{y_{i,j} -y_{i,-N}w_a(\tSE_j)
- y_{i,N}w_b(\tSE_j)\bigr\}
S(j,h)(\SEtInv(t)),
\label{Def-SE-Sinc-colloc}
\end{align}
for $i=1,\,\ldots,\,n$.
This procedure is the SE-Sinc-collocation method.

\subsection{DE-Sinc-collocation method (newly proposed)}
\label{subsec:DE-Sinc-collocation}

In view of Sections~\ref{subsec:SE-Sinc-Nystroem}
and~\ref{subsec:DE-Sinc-Nystroem},
it is quite natural to replace
the SE transformation
with the DE transformation in the previous method.
Assume the following conditions:
\begin{quote}\vspace*{-0.4\baselineskip}
\begin{itemize}
 \item[(DE1)] $k_{ij}Q\in\LC_{\alpha}(\DEt(\domD_d))$ $(i=1,\,\ldots,\,n,\,\,\,j=1,\,\ldots,\,n)$,
 \item[(DE2)] $g_iQ\in\LC_{\alpha}(\DEt(\domD_d))$ $(i=1,\,\ldots,\,n)$,
 \item[(DE4)] $y_i\in\MC_{\alpha}(\DEt(\domD_d))$ $(i=1,\,\ldots,\,n)$,
\end{itemize}\vspace*{-0.4\baselineskip}
\end{quote}
and define $h$ as Eq.~\eqref{Def:h-DE}.
Let $\mathbd{Y}$ be the solution
of the system of linear equations~\eqref{linear-eq-DE},
and let us write it as~\eqref{eq:Def-mathbd-Y}.
Then the approximated solution
$\tyn(t)=[\tilde{y}_1(t),\,\ldots,\,\tilde{y}_n(t)]^{\trans}$
is given by
\begin{align}
\tilde{y}^{(N)}_i(t)
=y_{i,-N}w_a(t)+y_{i,N}w_b(t)
+ \sum_{j=-N}^N
\bigl\{y_{i,j} -y_{i,-N}w_a(\tDE_j)
- y_{i,N}w_b(\tDE_j)\bigr\}
S(j,h)(\DEtInv(t)),
\label{Def-DE-Sinc-colloc}
\end{align}
for $i=1,\,\ldots,\,n$.
This procedure is the DE-Sinc-collocation method.

\begin{rem}
\label{rem:sol_regularity}
The assumptions on the solution $\mathbd{y}$,
i.e., (SE3), (DE3), (SE4), (DE4)
seem to be hard to check,
because $\mathbd{y}$ is an \emph{unknown} function to be determined.
In reality, however,
those assumptions are unnecessary,
because both (SE3) and (SE4) can be shown from the conditions
(SE1) and (SE2),
and both (DE3) and (DE4) can be shown from the conditions
(DE1) and (DE2).
To prove the facts is one of the main contributions
of this paper, which is explained next
(Theorems~\ref{thm:SE-sol-regularity} and~\ref{thm:DE-sol-regularity}).
\end{rem}


\section{Theoretical results}
\label{sec:theoret_results}

In this section,
Theoretical results for the four methods in Section~\ref{sec:numer_schemes}
are explained.
The proofs are given in Section~\ref{sec:proofs}.

\subsection{Results on the regularity of the solution}

As described in Remark~\ref{rem:sol_regularity},
the condition on the solution $\mathbd{y}$
is assumed in each scheme.
If the given problem~\eqref{Ini} is a `scalar' equation
($n=1$),
the following result has been known.

\begin{thm}[Stenger et al.~{\cite[Theorem 2.3]{stenger99:_ode_ivp}}]
Let $n=1$, and let
the assumptions {\rm (SE1)} and {\rm (SE2)} be fulfilled.
Then the initial-value problem~\eqref{Ini}
has a unique solution $y_1\in\MC_{\alpha}(\SEt(\domD_d))$.
\end{thm}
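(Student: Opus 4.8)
The plan is to exploit the fact that in the scalar case ($n=1$) the problem~\eqref{Ini}, namely $y_1'(t)=k_{11}(t)y_1(t)+g_1(t)$ with $y_1(a)=r_1$, admits the explicit integrating-factor representation. I would set
\[
P(z)=\int_a^z k_{11}(s)\diff s,\qquad
\Phi(z)=\int_a^z g_1(s)\,\mathrm{e}^{-P(s)}\diff s,
\]
and take as candidate solution $y_1(z)=\mathrm{e}^{P(z)}\bigl(r_1+\Phi(z)\bigr)$. Uniqueness is then the easy part: if $v$ is any solution on $(a,b)$ continuous up to $t=a$, the function $w=\mathrm{e}^{-P}(v-y_1)$ satisfies $w'\equiv 0$ and $w(a)=0$, hence $v\equiv y_1$; so it remains to show that the displayed $y_1$ is well defined and lies in $\MC_{\alpha}(\SEt(\domD_d))$.

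The heart of the argument is an indefinite-integral lemma: \emph{if $\varphi$ is analytic on $\SEt(\domD_d)$ with $\varphi Q\in\LC_{\alpha}(\SEt(\domD_d))$, then $F(z):=\int_a^z\varphi(s)\diff s$ is a well-defined analytic function on $\SEt(\domD_d)$, belongs to $\MC_{\alpha}(\SEt(\domD_d))$, and satisfies $F(a)=0$.} Since $Q(z)=(z-a)(b-z)$ has its zeros $a,b$ on the boundary of $\SEt(\domD_d)$, the function $\varphi=(\varphi Q)/Q$ is analytic there and $|\varphi(z)|\leq L|Q(z)|^{\alpha-1}$; as $|b-z|$ stays bounded away from $0$ near $a$ and $|z-a|$ near $b$, this gives $|\varphi(z)|\leq C|z-a|^{\alpha-1}$ near $a$ and $|\varphi(z)|\leq C|b-z|^{\alpha-1}$ near $b$, which are integrable toward those points because $\alpha>0$. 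Path-independence of $F$ is immediate from simple connectedness of $\SEt(\domD_d)$. For the sharp boundary behaviour I would use that, as $z\to a$, the region $\SEt(\domD_d)$ is asymptotically the sector $\{a+(b-a)\mathrm{e}^{\zeta}:\Re\zeta<-M,\ |\Im\zeta|<d\}$ of opening $2d<2\pi$ with vertex $a$ (and symmetrically near $b$): integrating $\varphi$ along a path inside this sector whose length is comparable to $|z-a|$ yields $|F(z)|\leq C'|z-a|^{\alpha}$, and likewise $|F(b)-F(z)|\leq C'|b-z|^{\alpha}$ with $F(b)=\int_a^b\varphi(s)\diff s$. Boundedness of $F$ on all of $\SEt(\domD_d)$ then follows from these two estimates together with the bound on $\varphi$ away from the endpoints.

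With this lemma in hand, applying it to $\varphi=k_{11}$ (assumption (SE1)) gives $P\in\MC_{\alpha}(\SEt(\domD_d))$, so in particular $P$ is bounded and $\mathrm{e}^{\pm P}$ are analytic and bounded on $\SEt(\domD_d)$; then $g_1\mathrm{e}^{-P}$ is analytic with $|g_1(z)\mathrm{e}^{-P(z)}|\leq C|Q(z)|^{\alpha-1}$ by (SE2), i.e.\ $(g_1\mathrm{e}^{-P})Q\in\LC_{\alpha}(\SEt(\domD_d))$, and the lemma applied again gives $\Phi\in\MC_{\alpha}(\SEt(\domD_d))$, in particular $\Phi$ bounded. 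Hence $y_1=\mathrm{e}^{P}(r_1+\Phi)$ is analytic and bounded on $\SEt(\domD_d)$, so $y_1\in\Hinf(\SEt(\domD_d))$. Since $P(a)=\Phi(a)=0$ we have $y_1(a)=r_1$ and
\[
y_1(z)-y_1(a)=\bigl(\mathrm{e}^{P(z)}-1\bigr)r_1+\mathrm{e}^{P(z)}\Phi(z),
\]
where $|\mathrm{e}^{P(z)}-1|\leq C|P(z)|\leq C'|z-a|^{\alpha}$ and $|\mathrm{e}^{P(z)}\Phi(z)|\leq C'|z-a|^{\alpha}$; at $z=b$, writing
\[
y_1(b)-y_1(z)=\bigl(\mathrm{e}^{P(b)}-\mathrm{e}^{P(z)}\bigr)\bigl(r_1+\Phi(b)\bigr)+\mathrm{e}^{P(z)}\bigl(\Phi(b)-\Phi(z)\bigr),
\]
one bounds $|\mathrm{e}^{P(b)}-\mathrm{e}^{P(z)}|\leq C|P(b)-P(z)|\leq C'|b-z|^{\alpha}$ from $P\in\MC_{\alpha}$ and $|\Phi(b)-\Phi(z)|\leq C'|b-z|^{\alpha}$ from $\Phi\in\MC_{\alpha}$. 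These two inequalities are exactly the defining estimates of $\MC_{\alpha}(\SEt(\domD_d))$, so $y_1\in\MC_{\alpha}(\SEt(\domD_d))$, and differentiating the explicit formula confirms $y_1'=k_{11}y_1+g_1$ on $(a,b)$ with $y_1(a)=r_1$.

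I expect the indefinite-integral lemma to be the main obstacle, and, within it, the sharp endpoint estimate $|F(z)-F(a)|\leq C'|z-a|^{\alpha}$: this requires describing the (sectorial, opening $2d$) geometry of $\SEt(\domD_d)$ near $a$ and $b$ precisely enough to run a path-length argument producing exactly the exponent $\alpha$ — one has to check that admissible integration paths staying in $\SEt(\domD_d)$ can be taken with length $\Order(|z-a|)$ while $|s-a|$ increases essentially monotonically along them. Everything else is routine manipulation of the explicit representation and elementary bounds on $\mathrm{e}^{w}-1$ and $\mathrm{e}^{w}-\mathrm{e}^{w'}$.
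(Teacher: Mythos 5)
Your argument is correct, but it takes a genuinely different route from the one this paper uses. The paper does not actually prove this scalar statement itself: it quotes it from Stenger et al.\ and then establishes the system version (Theorem~\ref{thm:SE-sol-regularity}) by a contraction-mapping argument, showing that the iterates $\Vol^m$ of the Volterra operator on $\Xsp=\{\Hinf(\SEt(\domD_d))\}^n$ satisfy $\|\Vol^m\|\leq c^m/m!$, and handling the inhomogeneous term $\Jint\mathbd{g}$ and the final H\"{o}lder estimate via Lemma~\ref{lem:SE-Indef-in-MC}. You instead use the explicit integrating-factor representation $y_1=\mathrm{e}^{P}(r_1+\Phi)$, which is precisely the device the introduction identifies as the reason the original analysis was confined to $n=1$: the formula $\exp\bigl(\int_a^t K\bigr)$ does not solve the system unless the matrices $K(t)$ commute at different times, so your proof cannot be upgraded to Theorem~\ref{thm:SE-sol-regularity}, whereas the paper's Picard-type argument can (that is the whole point of the paper's first contribution). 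On the other hand, your route is shorter and more self-contained for the scalar case, and it isolates the real analytic content cleanly: both proofs ultimately rest on the same key fact, namely that $\varphi Q\in\LC_{\alpha}(\SEt(\domD_d))$ implies $\int_a^z\varphi(s)\diff s\in\MC_{\alpha}(\SEt(\domD_d))$, which is exactly Lemma~\ref{lem:SE-Indef-in-MC} (Stenger's Theorem~4.1.3) and which the paper also quotes without proof. Your sketch of that lemma (sectorial geometry of opening $2d$ at the vertices $a$, $b$, paths of length $\Order(|z-a|)$ along which $|s-a|^{\alpha-1}$ integrates to $\Order(|z-a|^{\alpha})$) is the standard argument and is sound; the remaining manipulations with $\mathrm{e}^{P}$, including the Lipschitz bounds $|\mathrm{e}^{w}-\mathrm{e}^{w'}|\leq C|w-w'|$ on the bounded range of $P$, are routine and correct.
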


This theorem shows the condition (SE4),
and since $\MC_{\alpha}(\SEt(\domD_d))\subset \Hinf(\SEt(\domD_d))$,
the condition (SE3) is also shown.
In this paper, the same result is shown
in the case of a system of equations (for both SE and DE).

\begin{thm}
\label{thm:SE-sol-regularity}
Let the assumptions {\rm (SE1)} and {\rm (SE2)} be fulfilled.
Then the initial-value problem~\eqref{Ini}
has a unique solution $\mathbd{y}$
with
$y_i\in\MC_{\alpha}(\SEt(\domD_d))$ for $i=1,\,\ldots,\,n$.
\end{thm}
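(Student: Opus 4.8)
The plan is to reduce the system to a fixed-point problem on a suitable Banach space of analytic functions and to extract the required boundary behaviour from the structure of the Volterra equation~\eqref{Vol}. First I would observe that the initial-value problem~\eqref{Ini} is equivalent to the Volterra integral equation~\eqref{Vol}, so it suffices to show that~\eqref{Vol} has a unique solution with each component in $\MC_{\alpha}(\SEt(\domD_d))$. Write the right-hand side of~\eqref{Vol} as an operator $\mathcal{F}$ acting on vector-valued functions: $(\mathcal{F}\mathbd{y})(t)=\mathbd{r}+\int_a^t\{\mathbd{g}(s)+K(s)\mathbd{y}(s)\}\diff s$. The first key step is to identify the correct domain on which $\mathcal{F}$ is a contraction. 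Since the path of integration must stay inside the domain of analyticity, I would work on $\SEt(\domD_d)$ and use the fact that this domain is starlike with respect to $a$ (more precisely, one can integrate along the image under $\SEt$ of a horizontal-then-vertical path in $\domD_d$, or simply along the curve $\SEt(\SEtInv(t))$ parametrised suitably); this lets me define $\int_a^t$ unambiguously for $t\in\SEt(\domD_d)$.

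The second step is the contraction/iteration argument. On a bounded domain $\SEt(\domD_d)$ the kernel entries $k_{ij}$ are bounded (they lie in $\Hinf$, being in $\LC_{\alpha}$), so the usual Volterra trick works: iterating $\mathcal{F}$ gains a factor controlled by $\ell(t)^m/m!$ where $\ell(t)$ is the length of the integration path from $a$ to $t$, which is uniformly bounded on $\SEt(\domD_d)$; hence some power $\mathcal{F}^m$ is a contraction on $\Hinf(\SEt(\domD_d))^n$, giving a unique fixed point $\mathbd{y}$ with each $y_i\in\Hinf(\SEt(\domD_d))$. This already yields condition (SE3). Alternatively one may build the Neumann series $\mathbd{y}=\sum_{m\ge0}\mathcal{F}_0^m(\mathbd{r}+\int_a^\cdot\mathbd{g})$ where $\mathcal{F}_0$ is the linear part, and check it converges in $\Hinf$.

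The third and most delicate step is upgrading $\Hinf$-regularity to membership in $\MC_{\alpha}$, i.e.\ establishing the two Hölder-type bounds $|y_i(z)-y_i(a)|\le M|z-a|^{\alpha}$ and $|y_i(b)-y_i(z)|\le M|b-z|^{\alpha}$. Here I would exploit the structure directly: from~\eqref{Vol}, $\mathbd{y}(z)-\mathbd{y}(a)=\int_a^z\{\mathbd{g}(s)+K(s)\mathbd{y}(s)\}\diff s$, and since $\mathbd{g}Q$ and $k_{ij}Q$ are in $\LC_{\alpha}$ while $\mathbd{y}$ is bounded, the integrand is $\Order(|Q(s)|^{\alpha-1})=\Order(|s-a|^{\alpha-1})$ near $a$ (and $\Order(|b-s|^{\alpha-1})$ near $b$); integrating $|s-a|^{\alpha-1}$ from $a$ to $z$ along a path of comparable length produces exactly $\Order(|z-a|^{\alpha})$. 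The behaviour near $b$ is obtained the same way after writing $\mathbd{y}(b)-\mathbd{y}(z)=\int_z^b\{\cdots\}\diff s$ and noting $\mathbd{y}(b)$ is well-defined because the integrand is integrable up to $b$. One must take a little care that the constant in $|Q(s)|^\alpha\lesssim|s-a|^\alpha$ is uniform on $\SEt(\domD_d)$ (true since $|b-s|$ is bounded above and below away from $b$ on a neighbourhood of $a$), and that $0<\alpha\le1$ is needed only for the $\MC_\alpha$ definition, not for the argument itself when $\alpha>1$ one gets Lipschitz behaviour which is stronger; the statement as phrased presumably carries the standing hypothesis $0<\alpha\le1$ on $\alpha$ from Definition~\ref{Def:MC}.

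I expect the main obstacle to be the path-of-integration bookkeeping on the curved domain $\SEt(\domD_d)$: one needs a single choice of path from $a$ (or from $z$) to each point of the domain along which (i) analyticity holds, (ii) the total length is uniformly bounded, and (iii) the length from $a$ to $z$ is $\Order(|z-a|)$ near $a$ and similarly near $b$. Establishing such a family of paths — or equivalently transferring the whole argument to the strip $\domD_d$ via $\SEtInv$, where paths are trivial but the weight $Q(\SEt(\zeta))$ decays exponentially as $\Re\zeta\to\pm\infty$ — is the technical crux; everything else is the standard Picard–Lindelöf/Volterra machinery adapted to complex domains. The DE case (Theorem~\ref{thm:DE-sol-regularity}) will follow by the identical argument with $\SEt$ replaced by $\DEt$ and $\domD_d$ restricted to $0<d<\pi/2$.
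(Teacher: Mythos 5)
Your overall route is the same as the paper's: pass to the Volterra equation~\eqref{Vol}, run a Picard/Volterra iteration on $\{\Hinf(\SEt(\domD_d))\}^n$ so that some power of the integral operator is a contraction, and then upgrade to $\MC_{\alpha}$ by reading the H\"{o}lder bounds off the right-hand side of~\eqref{Vol} (your step 3 is exactly the paper's closing argument, and your remark that $\Jint\mathbd{g}$ and $\Vol\mathbd{y}$ have integrands of size $|Q(s)|^{\alpha-1}$ is the content of the paper's Lemma~\ref{lem:SE-Indef-in-MC}).

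There is, however, a genuine gap in your contraction step. You assert that ``the kernel entries $k_{ij}$ are bounded (they lie in $\Hinf$, being in $\LC_{\alpha}$).'' Hypothesis (SE1) is that $k_{ij}Q\in\LC_{\alpha}(\SEt(\domD_d))$, not that $k_{ij}\in\LC_{\alpha}$; it only gives $|k_{ij}(z)|\leq L|Q(z)|^{\alpha-1}$, which is \emph{unbounded} near the endpoints whenever $0<\alpha<1$ (e.g.\ Example~\ref{exam:2} in the paper has $k_{12}(t)=1/(2\sqrt{t})$ with $\alpha=1/2$). Consequently the ``path length to the $m$-th power over $m!$'' bound does not apply as stated: the quantity that must be iterated is not arc length but $\int_a^z|Q(s)|^{\alpha-1}\,|\divv s|$ along a suitable path, and one needs this quantity to be finite, uniformly bounded over $\SEt(\domD_d)$, and monotone along the path so that the simplex/symmetrization argument still yields the $1/m!$ gain. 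This is precisely what the paper's first lemma supplies, via the bound $|\Vol^m[\mathbd{f}](z)|\leq\{nL(b-a)^{2\alpha-1}c_1\Bfunc(\psi_1(x),\alpha,\alpha)\}^m\,\|\mathbd{f}\|_{\Xsp}/m!$ with the incomplete beta function replacing your $\ell(t)^m$. The same issue appears, less visibly, in checking that $\mathcal{F}$ maps $\{\Hinf\}^n$ into itself: since $g_i$ may also be unbounded (only $g_iQ$ is), even $\Jint\mathbd{g}\in\{\Hinf\}^n$ requires the endpoint-integrability argument of Lemma~\ref{lem:SE-Indef-in-MC} rather than plain boundedness. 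Once you replace the bounded-kernel claim by the weighted estimate, your plan matches the paper's proof; your step 3 and your identification of the path-bookkeeping on $\SEt(\domD_d)$ as the technical crux are both on target.
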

\begin{thm}
\label{thm:DE-sol-regularity}
Let the assumptions {\rm (DE1)} and {\rm (DE2)} be fulfilled.
Then the initial-value problem~\eqref{Ini}
has a unique solution $\mathbd{y}$
with
$y_i\in\MC_{\alpha}(\DEt(\domD_d))$ for $i=1,\,\ldots,\,n$.
\end{thm}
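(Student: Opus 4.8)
The plan is to prove Theorem~\ref{thm:DE-sol-regularity} in parallel with Theorem~\ref{thm:SE-sol-regularity}, the only difference being the conformal map $\DEt$ in place of $\SEt$. First I would establish existence and uniqueness of a solution $\mathbd{y}$ to~\eqref{Ini} (equivalently to~\eqref{Vol}) on the \emph{real} interval $[a,\,b]$ by a standard Picard iteration: since (DE1) and (DE2) imply in particular that $K$ and $\mathbd{g}$ are continuous (indeed analytic) on $(a,\,b)$ and integrable up to the endpoints (the factor $Q$ controls the only possible singularities, and $\alpha>0$ makes $k_{ij}$, $g_i$ integrable near $a$ and $b$), the Volterra operator is a contraction on $C([a,\,b])^n$ after finitely many iterations. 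This gives the unique real solution; the real work is to show it extends analytically to $\DEt(\domD_d)$ and lies in $\MC_\alpha(\DEt(\domD_d))$.

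The key step is to run the Picard iteration in the \emph{complex} domain $\DEt(\domD_d)$, along paths emanating from $a$. Define $\mathbd{y}_0(z)=\mathbd{r}$ and $\mathbd{y}_{m+1}(z)=\mathbd{r}+\int_a^z\{\mathbd{g}(s)+K(s)\mathbd{y}_m(s)\}\diff s$, the integral taken along a path in $\DEt(\domD_d)$ (the domain is simply connected, so this is path-independent once we know the integrand is analytic, which it is by (DE1)--(DE2)). I would show by induction that each $\mathbd{y}_m\in\Hinf(\DEt(\domD_d))^n$ and that the iterates converge uniformly on $\DEt(\domD_d)$. The bound $|k_{ij}(z)|\le L|Q(z)|^{\alpha}$ is what makes the line integral $\int_a^z k_{ij}(s)\diff s$ bounded on the whole domain: parametrizing near the endpoints via $\DEtInv$ one sees $\int |Q(\DEt(x))|^\alpha |\DEtDiv(x)|\diff x$ converges (this is exactly the integrability underlying Theorem~\ref{Thm:DE-Sinc-indef}). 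Hence the limit $\mathbd{y}=\lim_m\mathbd{y}_m$ is analytic and bounded on $\DEt(\domD_d)$, giving $y_i\in\Hinf(\DEt(\domD_d))$, and by uniqueness it agrees with the real solution on $(a,\,b)$.

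It then remains to upgrade $\Hinf$ membership to $\MC_\alpha$ membership, i.e.\ to verify the H\"older-type bounds $|y_i(z)-y_i(a)|\le M|z-a|^\alpha$ and $|y_i(b)-y_i(z)|\le M|b-z|^\alpha$. For the endpoint $a$ this follows directly from the integral equation: $\mathbd{y}(z)-\mathbd{r}=\int_a^z\{\mathbd{g}(s)+K(s)\mathbd{y}(s)\}\diff s$, and since $\mathbd{g}+K\mathbd{y}$ is bounded by a constant times $|Q(s)|^\alpha$ on the domain (using (DE1), (DE2) and $\mathbd{y}\in\Hinf$), integrating along a suitable path from $a$ to $z$ gives $|\mathbd{y}(z)-\mathbd{r}|\lesssim |z-a|^\alpha$ (since $|Q(s)|^\alpha\sim|b-a|^\alpha|s-a|^\alpha$ near $a$); note $\mathbd{y}(a)=\mathbd{r}$ from the initial condition. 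For the endpoint $b$ one instead uses $\mathbd{y}(b)-\mathbd{y}(z)=\int_z^b\{\mathbd{g}(s)+K(s)\mathbd{y}(s)\}\diff s$ (the integral $\int_a^b$ converges, so $\mathbd{y}(b)$ is well defined) and the same estimate with $|Q(s)|^\alpha\sim|b-a|^\alpha|b-s|^\alpha$ near $b$. The main obstacle I anticipate is the bookkeeping near the endpoints: one must choose the integration paths inside $\DEt(\domD_d)$ so that the length and the growth of $|Q|^\alpha$ along them are simultaneously controlled (the domain $\DEt(\domD_d)$ pinches at $a$ and $b$), and one must be careful that the constants in the H\"older bounds do not blow up — this is where the precise geometry of $\DEt(\domD_d)$, and the fact $0<d<\pi/2$, enters, exactly as in Theorems~\ref{Thm:DE-Sinc-Base} and~\ref{Thm:DE-Sinc-indef}.
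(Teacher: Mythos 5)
Your overall architecture coincides with the paper's: a Picard/contraction argument in the Banach space $\Xsp=\{\Hinf(\DEt(\domD_d))\}^n$ (your uniformly convergent complex Picard iteration is exactly the paper's bound $\|\Vol^m\mathbd{f}\|_{\Xsp}\le c^m\|\mathbd{f}\|_{\Xsp}/m!$ of Lemma~\ref{lem:Vol-contraction-DE}), followed by a bootstrap of the H\"older bounds from the integral equation, which is also how the paper concludes. However, two things prevent the proposal from being a proof. First, you repeatedly misstate what (DE1)/(DE2) give: $k_{ij}Q\in\LC_{\alpha}(\DEt(\domD_d))$ means $|k_{ij}(z)|\le L|Q(z)|^{\alpha-1}$, \emph{not} $|k_{ij}(z)|\le L|Q(z)|^{\alpha}$; the coefficients are allowed to blow up at the endpoints (that is the point of admitting $\alpha<1$, cf.\ Example~\ref{exam:2}). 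The same slip reappears in your last step, where you bound $\mathbd{g}+K\mathbd{y}$ by $C|Q(s)|^{\alpha}$ instead of $C|Q(s)|^{\alpha-1}$, and the accompanying arithmetic is off: integrating a function $\sim|s-a|^{\alpha}$ from $a$ to $z$ yields $|z-a|^{1+\alpha}$, whereas integrating the correct $\sim|s-a|^{\alpha-1}$ is what yields the desired $|z-a|^{\alpha}$.

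Second, and more seriously, the step you explicitly defer as ``the main obstacle I anticipate'' --- choosing paths in the pinched domain $\DEt(\domD_d)$ so that $\int_a^z|Q(s)|^{\alpha-1}\,|\diff s|\le C|z-a|^{\alpha}$ holds uniformly in $z$ --- is precisely the nontrivial content of the theorem, and it is left unresolved. The paper settles it by integrating along the image under $\DEt$ of the horizontal segment ending at $\zeta=x+\imnum y=\DEtInv(z)$, which produces the incomplete-beta-function bound $c_2(b-a)^{2\alpha-1}\Bfunc(\psi_2(x),\alpha,\alpha)$ (Lemma~\ref{lem:Vol-contraction-DE} with $m=1$), and then combines the elementary inequality $\Bfunc(u,\alpha,\alpha)\le\Bfunc(1,\alpha,\alpha)\,u^{\alpha}$ with the DE-specific estimate $(b-a)\psi_2(x)\le|\DEt(x+\imnum y)-a|=|z-a|$ of Lemma~\ref{Lem:Bound-complex-DEtrans} to convert this into $C|z-a|^{\alpha}$; this is the content of Lemma~\ref{lem:DE-Indef-in-MC}. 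Note also that this endpoint estimate is needed not only for the final upgrade to $\MC_{\alpha}$ but already for the contraction step, since one must check $\Jint\mathbd{g}\in\Xsp$ before the fixed-point argument applies. Without Lemma~\ref{Lem:Bound-complex-DEtrans} (or an equivalent lower bound for $|z-a|$ in terms of $\Re\DEtInv(z)$), membership in $\MC_{\alpha}(\DEt(\domD_d))$ rather than merely $\Hinf(\DEt(\domD_d))$ is not established.
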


\subsection{Results on convergence of the numerical solutions}

In the case of a `scalar' equation,
the convergence of the SE-Sinc-collocation method
is analyzed as follows.
In what follows, $C$ denotes a constant independent of $N$.

\begin{thm}[Stenger~{\cite[pp.~446--447]{stenger93:_numer}}]
Let $n=1$, and let
the assumptions {\rm (SE1)} and {\rm (SE2)} be fulfilled.
Then, for all $N$ sufficiently large,
the system~\eqref{linear-eq-SE} is uniquely solvable,
and the error of the numerical solution $\tilde{y}_1$
of Eq.~\eqref{Def-SE-Sinc-colloc}
is estimated as
\begin{equation*}
\max_{a\leq t\leq b}|y_1(t)-\tilde{y}_1(t)|
\leq C\sqrt{N}\exp\left({-\sqrt{\pi d \alpha N}}\right).
\end{equation*}
\end{thm}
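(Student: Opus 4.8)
The plan is to connect the SE-Sinc-collocation method to the SE-Sinc-Nystr\"{o}m method, exploiting the fact (already hinted at in Section~\ref{subsec:SE-Sinc-collocation}) that the two share the same linear system~\eqref{linear-eq-SE}. First I would invoke Theorem~\ref{thm:SE-sol-regularity}: from (SE1) and (SE2) alone, the problem~\eqref{Ini} (equivalently~\eqref{Vol}) has a unique solution $y_1\in\MC_{\alpha}(\SEt(\domD_d))$. In particular the conditions (SE3) and (SE4) are automatically met, so all the analytical machinery (Theorems~\ref{Thm:SE-Sinc-Base} and~\ref{Thm:SE-Sinc-indef}) is available.

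Next I would establish unique solvability of~\eqref{linear-eq-SE} for large $N$. The idea is to view the discretized Nystr\"{o}m equation~\eqref{Def-UsolSE} evaluated at the sampling points as a perturbation of the exact integral equation~\eqref{Vol}. Writing the system as $(I - A_N)\mathbd{Y}^{\textSE} = \mathbd{b}_N$ where $A_N$ carries the Sinc indefinite integration weights $h I^{(-1)}_N D_N^{\textSE}[K_{ij}^{\textSE}]$, one shows $A_N$ is, uniformly in $N$, a good discrete approximation to the compact Volterra operator $\mathbd{u}\mapsto\int_a^t K(s)\mathbd{u}(s)\diff s$; since $I$ minus that operator is invertible (the Volterra equation of the second kind is uniquely solvable), a standard collectively-compact / Anselone-type argument, or a direct Neumann-series estimate using the near-triangular structure of $I^{(-1)}_N$, gives invertibility of $I-A_N$ with uniformly bounded inverse for all $N$ sufficiently large.

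The heart of the convergence estimate is then the triangle-inequality split
\begin{equation*}
|y_1(t)-\tilde{y}_1(t)|
\leq |y_1(t)-\ProjSE[y_1](t)|
 + |\ProjSE[y_1](t)-\tilde{y}_1(t)|.
\end{equation*}
The first term is bounded by $C\sqrt{N}\exp(-\sqrt{\pi d\alpha N})$ directly from Theorem~\ref{Thm:SE-Sinc-Base}, using $y_1\in\MC_{\alpha}(\SEt(\domD_d))$ and the choice~\eqref{Def:h-SE} of $h$. For the second term, note both $\ProjSE[y_1]$ and $\tilde{y}_1$ are built from the same basis $\{w_a,w_b,S(j,h)(\SEtInv(\cdot))\}$, so their difference is controlled (up to a Lebesgue-type constant of order $\sqrt{N}$, or a genuinely bounded constant if one uses the sharper norm bounds for the generalized Sinc interpolation) by the maximum nodal discrepancy $\max_i|y_1(\tSE_i) - y_{1,i}|$, i.e.\ by $\|\mathbd{Y}^{\textSE}_{\text{exact}} - \mathbd{Y}^{\textSE}\|_\infty$ where $\mathbd{Y}^{\textSE}_{\text{exact}}$ collects the true values $y_1(\tSE_i)$. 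Subtracting the exact relation~\eqref{Vol} sampled at $t=\tSE_i$ from~\eqref{linear-eq-SE}, the residual is exactly the SE-Sinc indefinite integration error applied to $s\mapsto g_1(s)+k_{11}(s)y_1(s)$, which by (SE1), (SE2), $y_1\in\Hinf$ and Theorem~\ref{Thm:SE-Sinc-indef} (after checking $(g_1+k_{11}y_1)Q\in\LC_{\alpha}$, which follows since $k_{11}Q,g_1Q\in\LC_{\alpha}$ and $y_1$ is bounded analytic, hence $k_{11}y_1 Q\in\LC_\alpha$) is $\Order(\exp(-\sqrt{\pi d\alpha N}))$. Multiplying by the uniformly bounded inverse of $I-A_N$ gives the same bound for the nodal error, and hence for the second term.

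Combining, both contributions are $\Order(\sqrt{N}\exp(-\sqrt{\pi d\alpha N}))$, yielding the claim. The main obstacle I anticipate is the uniform invertibility of $I - A_N$ with an $N$-independent bound on the inverse: one must argue that the Sinc-quadrature discretization of the Volterra operator is collectively compact (or use the lower-triangular-plus-small-error structure of $h I^{(-1)}_N$) carefully enough that the perturbation argument is legitimate for \emph{all} large $N$ simultaneously, not merely pointwise in $N$. The secondary nuisance is tracking whether the interpolation-stability constant in the second term is truly $\Order(1)$ or costs an extra $\sqrt{N}$ factor, but since the target rate already carries $\sqrt{N}$ this does not affect the final statement.
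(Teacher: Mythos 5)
Your proposal is correct and follows essentially the same route as the paper's proof of the generalized version (Theorem~\ref{thm:SE-Sinc-converge}): regularity of $y_1$ from (SE1)--(SE2), unique solvability and a uniformly bounded inverse via a collectively-compact/Atkinson-type perturbation argument (the paper's Theorem~\ref{Thm:Atkinson-Nystroem} plus the bound $\|(\Vol-\VolSEn)\VolSEn\|\leq C/\sqrt{N}$), and the split into the projection error $\|y_1-\ProjSE y_1\|$ (Theorem~\ref{Thm:SE-Sinc-Base}) plus the propagated Nystr\"{o}m/nodal error controlled by Theorem~\ref{Thm:SE-Sinc-indef}. The only cosmetic difference is that the paper pins the interpolation-stability constant at $\Order(\log N)$ via Lemma~\ref{Lem:Sinc-Real-Sum}, which, as you note, is absorbed by the $\sqrt{N}$ already present in the target rate.
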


This paper extends the result to a system of equations,
and to the DE-Sinc-collocation method.

\begin{thm}
\label{thm:SE-Sinc-converge}
Let
the assumptions {\rm (SE1)} and {\rm (SE2)} be fulfilled.
Then, for all $N$ sufficiently large,
the system~\eqref{linear-eq-SE} is uniquely solvable,
and the error of the numerical solution $\tyn$
of Eq.~\eqref{Def-SE-Sinc-colloc}
is estimated as
\begin{equation*}
\max_{i=1,\,\ldots,\,n}\left\{
\max_{a\leq t\leq b}|y_i(t)-\tilde{y}_i^{(N)}(t)|\right\}
\leq C\sqrt{N}\exp\left({-\sqrt{\pi d \alpha N}}\right).
\end{equation*}
\end{thm}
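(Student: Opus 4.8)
The plan is to split the collocation error at an arbitrary $t\in[a,\,b]$ as
$y_i(t)-\tilde{y}_i^{(N)}(t)=\{y_i(t)-\ProjSE[y_i](t)\}+\{\ProjSE[y_i](t)-\tilde{y}_i^{(N)}(t)\}$,
bound the first bracket by the generalized SE-Sinc approximation estimate (Theorem~\ref{Thm:SE-Sinc-Base}), and bound the second bracket by controlling the nodal errors $y_{i,j}-y_i(\tSE_j)$ through the linear system~\eqref{linear-eq-SE}. I would begin by invoking Theorem~\ref{thm:SE-sol-regularity}: under {\rm (SE1)}--{\rm (SE2)} the problem~\eqref{Ini}, equivalently the Volterra equation~\eqref{Vol}, has a unique solution $\mathbd{y}$ with $y_i\in\MC_{\alpha}(\SEt(\domD_d))\subset\Hinf(\SEt(\domD_d))$. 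In particular each integrand $f_i:=g_i+\sum_{l=1}^{n}k_{il}y_l$ satisfies $f_iQ\in\LC_{\alpha}(\SEt(\domD_d))$, since $g_iQ$ and $k_{il}Q$ lie in $\LC_{\alpha}$ by {\rm (SE1)}--{\rm (SE2)} and the $y_l$ are bounded and analytic there; hence $f_i$ is an admissible integrand for Theorem~\ref{Thm:SE-Sinc-indef}.

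Next, writing $A_N:=I_n\otimes\{hI^{(-1)}_ND_N^{\textSE}\}[K^{\textSE}_{ij}]$, so that the coefficient matrix of~\eqref{linear-eq-SE} is $I_n\otimes I_N-A_N$ (abbreviated $I-A_N$), I would substitute the exact nodal vector $\mathbd{Y}^{\ast}$, whose entries are $y_i(\tSE_j)$, into~\eqref{linear-eq-SE}. Because $\{hI^{(-1)}_ND_N^{\textSE}\}$ is precisely the matrix of the SE-Sinc indefinite integration~\eqref{eq:SE-Sinc-indef} sampled at the nodes $\tSE_k$, the resulting residual $\mathbd{\delta}_N$ has $(i,k)$-entry equal to the quadrature error of $\int_a^{\tSE_k}f_i(s)\diff s$, and Theorem~\ref{Thm:SE-Sinc-indef} gives $\|\mathbd{\delta}_N\|_{\infty}\leq C\exp(-\sqrt{\pi d\alpha N})$. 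Subtracting~\eqref{linear-eq-SE} itself (satisfied by the numerical vector $\mathbd{Y}$) yields $(I-A_N)(\mathbd{Y}^{\ast}-\mathbd{Y})=\mathbd{\delta}_N$.

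The crux — and the step I expect to be the main obstacle — is to show that $I-A_N$ is invertible for all sufficiently large $N$ with $\|(I-A_N)^{-1}\|_{\infty}$ bounded independently of $N$; this both gives the unique solvability of~\eqref{linear-eq-SE} asserted in the theorem and yields $\|\mathbd{Y}^{\ast}-\mathbd{Y}\|_{\infty}\leq C\exp(-\sqrt{\pi d\alpha N})$ from the previous paragraph. I would obtain it by a perturbation argument off the continuous Volterra operator $\mathcal{V}$ defined by $(\mathcal{V}\mathbd{u})(t)=\int_a^tK(s)\mathbd{u}(s)\diff s$ on $\Rset^n$-valued continuous functions on $[a,\,b]$: by {\rm (SE1)} one has $\int_a^b|k_{il}(s)|\diff s<\infty$ (since $|Q|^{\alpha-1}$ is integrable for $0<\alpha\leq 1$), so $\mathcal{V}$ is quasi-nilpotent, $\|\mathcal{V}^m\|\leq\kappa^m/m!$ with $\kappa:=\int_a^b\|K(s)\|\diff s$, and $I-\mathcal{V}$ is boundedly invertible. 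One then verifies that $A_N$ is a uniformly bounded, collectively compact approximation of $\mathcal{V}$ — using the identity $\SEtDiv(jh)=Q(\tSE_j)/(b-a)$, the fact that $\sigma_k=1/2+\Si(\pi k)/\pi$ mimics the Heaviside step, and that the weighted node sums are Riemann sums for the corresponding integrals with errors controlled by Theorem~\ref{Thm:SE-Sinc-indef} — so that collectively compact operator theory (or, equivalently, a direct estimate of the powers $A_N^m$) supplies the required uniform bound. This is the part that demands real care; the remaining steps are routine given the cited theorems.

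Finally I would transfer the nodal bound to the whole interval. Comparing~\eqref{Def-SE-Sinc-colloc} with the generalized SE-Sinc approximation~\eqref{eq:general-SE-Sinc-approx} of $y_i$, and writing $e_j:=y_{i,j}-y_i(\tSE_j)$, one gets
\begin{align*}
\tilde{y}_i^{(N)}(t)-\ProjSE[y_i](t)
&=e_{-N}w_a(t)+e_Nw_b(t)\\
&\quad+\sum_{j=-N}^N\{e_j-e_{-N}w_a(\tSE_j)-e_Nw_b(\tSE_j)\}S(j,h)(\SEtInv(t)).
\end{align*}
Using $0\leq w_a,w_b\leq 1$ on $[a,\,b]$ and the Lebesgue-constant bound $\sup_{x\in\Rset}\sum_{j=-N}^N|S(j,h)(x)|=\Order(\log N)$, this gives $\|\tilde{y}_i^{(N)}-\ProjSE[y_i]\|_{\infty}\leq C(\log N)\max_j|e_j|\leq C(\log N)\exp(-\sqrt{\pi d\alpha N})$. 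Since $y_i\in\MC_{\alpha}(\SEt(\domD_d))$, Theorem~\ref{Thm:SE-Sinc-Base} gives $\|y_i-\ProjSE[y_i]\|_{\infty}\leq C\sqrt{N}\exp(-\sqrt{\pi d\alpha N})$; adding the two estimates and using $\log N=\Order(\sqrt{N})$ yields, uniformly in $i$, the claimed bound $\max_{a\leq t\leq b}|y_i(t)-\tilde{y}_i^{(N)}(t)|\leq C\sqrt{N}\exp(-\sqrt{\pi d\alpha N})$.
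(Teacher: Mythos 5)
Your proposal is correct and takes essentially the same route as the paper: the identical decomposition $y_i-\tilde{y}_i^{(N)}=(y_i-\ProjSE[y_i])+\ProjSE[y_i-y_i^{(N)}]$, with the first term bounded by Theorem~\ref{Thm:SE-Sinc-Base}, the second by the $\Order(\log N)$ Lebesgue constant (Lemma~\ref{Lem:Sinc-Real-Sum}) times the Nystr\"{o}m/nodal error from Theorem~\ref{Thm:SE-Sinc-indef}, and solvability obtained by perturbing off the continuous Volterra operator (the paper executes this via Atkinson's Theorem~\ref{Thm:Atkinson-Nystroem} applied to $\VolSEn$ on $\Csp$ together with the bound $\|(\Vol-\VolSEn)\VolSEn\|\leq C/\sqrt{N}$). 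The only cosmetic difference is that you work at the level of the matrix $I-A_N$ and the nodal errors $e_{i,j}$, whereas the paper works with the operator $(\mathcal{I}-\VolSEn)^{-1}$ and the sup-norm Nystr\"{o}m error $\|y_i-y_i^{(N)}\|_{\Csp}$; these are equivalent via the standard Nystr\"{o}m interpolation identity, which is exactly the machinery you flag as the delicate step.
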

\begin{thm}
\label{thm:DE-Sinc-converge}
Let
the assumptions {\rm (DE1)} and {\rm (DE2)} be fulfilled.
Then, for all $N$ sufficiently large,
the system~\eqref{linear-eq-DE} is uniquely solvable,
and the error of the numerical solution $\tyn$
of Eq.~\eqref{Def-DE-Sinc-colloc}
is estimated as
\begin{equation*}
\max_{i=1,\,\ldots,\,n}\left\{
\max_{a\leq t\leq b}|y_i(t)-\tilde{y}_i^{(N)}(t)|\right\}
\leq C \exp\left\{\frac{-\pi d N}{\log(2 d N/\alpha)}\right\}.
\end{equation*}
\end{thm}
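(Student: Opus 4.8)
The plan is to mirror the structure that must already be in place for the SE case (Theorem~\ref{thm:SE-Sinc-converge}), replacing the SE-transformation estimates by their DE counterparts. First I would observe that the numerical solution $\tyn$ defined by~\eqref{Def-DE-Sinc-colloc} is exactly $\ProjDE$ applied to a function built from the coefficient vector $\mathbd{Y}$ solving~\eqref{linear-eq-DE}; more precisely, if one introduces the function $\hat{y}_i$ whose value at the nodes $\tDE_j$ equals $y_{i,j}$ and which is obtained by applying the DE-Sinc indefinite integration~\eqref{eq:DE-Sinc-indef} to the right-hand side of~\eqref{Vol} with $\mathbd{y}$ replaced by these sampled values, then $\tyn = \ProjDE[\hat{\mathbd{y}}]$ componentwise (using that $J(j,h)$ and $w_a,w_b$ interpolate consistently). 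By Theorem~\ref{thm:DE-sol-regularity} the true solution satisfies $y_i\in\MC_{\alpha}(\DEt(\domD_d))$, so (DE3)/(DE4) hold and the true solution itself is well approximated by $\ProjDE[\mathbd{y}]$ with the rate in Theorem~\ref{Thm:DE-Sinc-Base}. Hence by the triangle inequality it suffices to bound $\max_i\max_t|\ProjDE[y_i](t)-\ProjDE[\hat{y}_i](t)|$, which reduces (since $\ProjDE$ is a bounded linear operator, with operator norm growing at most polynomially in $N$, say $\Order(\log N)$) to estimating the discrete error $\max_{i,j}|y_i(\tDE_j)-y_{i,j}|$ at the nodes, together with the quadrature error of the DE-Sinc indefinite integration from Theorem~\ref{Thm:DE-Sinc-indef}.

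The core of the argument is therefore the discrete error analysis. Writing $\mathbd{e} = \mathbd{Y}^{\text{true}}-\mathbd{Y}$, where $\mathbd{Y}^{\text{true}}$ collects the exact values $y_i(\tDE_j)$, one subtracts the exact equation~\eqref{Vol} evaluated at the nodes from the discrete system~\eqref{linear-eq-DE} evaluated there. The exact values satisfy the discrete system up to a residual $\mathbd{\rho}$ equal to the DE-Sinc-indefinite-integration error applied to $s\mapsto \mathbd{g}(s)+K(s)\mathbd{y}(s)$; since $k_{ij}Q, g_iQ\in\LC_{\alpha}$ by (DE1)/(DE2) and $y_i$ is bounded and analytic, the product $(\,\text{integrand}\,)\cdot Q$ lies in $\LC_{\alpha}(\DEt(\domD_d))$, so Theorem~\ref{Thm:DE-Sinc-indef} gives $\|\mathbd{\rho}\|_\infty \le C\,\frac{\log(2dN/\alpha)}{N}\exp\{-\pi d N/\log(2dN/\alpha)\}$. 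Then $(I_n\otimes I_N - I_n\otimes\{hI_N^{(-1)}D_N^{\textDE}\}[K_{ij}^{\textDE}])\mathbd{e} = \mathbd{\rho}$, and the task is to show the coefficient matrix is invertible with inverse bounded uniformly (or at worst polynomially) in $N$ for $N$ large. This yields unique solvability of~\eqref{linear-eq-DE} and $\|\mathbd{e}\|_\infty \le C\|\mathbd{\rho}\|_\infty$.

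The main obstacle is precisely this uniform boundedness of the inverse of $I - M_N$, where $M_N = I_n\otimes\{hI_N^{(-1)}D_N^{\textDE}\}[K_{ij}^{\textDE}]$ is the discretization of the Volterra integral operator. The operator $\mathbd{y}\mapsto \int_a^t K(s)\mathbd{y}(s)\diff s$ on $C[a,b]^n$ has spectral radius zero (it is a Volterra operator), so $I$ minus it is boundedly invertible; the difficulty is transferring this to the discrete level. The route I would take is a collectively-compact / consistency argument: show $M_N$ converges to the continuous Volterra operator in a suitable sense (pointwise on a dense set, with the family $\{M_N\}$ collectively compact), using that $hI_N^{(-1)}D_N^{\textDE}$ is the matrix of the DE-Sinc indefinite integration and that it is uniformly bounded as an operator between the relevant discrete sup-norm spaces — this last fact is essentially contained in the proof of Theorem~\ref{Thm:DE-Sinc-indef} and must be extracted. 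Once $\{M_N\}$ is collectively compact and $M_N\to M$ strongly, standard perturbation theory (Anselone) gives $(I-M_N)^{-1}$ uniformly bounded for large $N$. Combining: $\|\mathbd{e}\|_\infty = \Order(\|\mathbd{\rho}\|_\infty)$, the $\ProjDE$-operator norm contributes at most a $\log N$ factor, and the dominant term is $\Order(\exp\{-\pi d N/\log(2dN/\alpha)\})$ as claimed, the extra $\frac{\log N}{N}$ from $\|\mathbd{\rho}\|_\infty$ and the polynomial factor from $\ProjSE$-type bounds being absorbed into the constant $C$ for $N$ large.
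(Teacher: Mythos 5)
Your proposal is correct and follows essentially the same route as the paper: the key identity $\tilde{y}_i^{(N)}=\ProjDE[y_i^{(N)}]$ relating the collocation solution to the Nystr\"{o}m solution, a triangle inequality splitting into $\|y_i-\ProjDE y_i\|$ (handled via Theorem~\ref{thm:DE-sol-regularity} and Theorem~\ref{Thm:DE-Sinc-Base}) and $\|\ProjDE\|\,\|y_i-y_i^{(N)}\|$ (handled via the $\Order(\log N)$ Lebesgue-constant bound and the Nystr\"{o}m error), with the Nystr\"{o}m stability obtained from Theorem~\ref{Thm:DE-Sinc-indef} together with an Atkinson/Anselone collectively-compact perturbation argument showing $(\mathcal{I}-\VolDEn)^{-1}$ is uniformly bounded. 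Your phrasing of the stability step at the discrete matrix level rather than on $\Csp$ is a cosmetic difference only; the paper's Lemma bounding $\|(\Vol-\VolDEn)\VolDEn\|$ plays exactly the role you assign to collective compactness.
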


Furthermore, this paper also shows the convergence
of the SE/DE-Sinc-Nystr\"{o}m methods.

\begin{thm}
\label{thm:converge-SE-Sinc-Nystroem}
Let
the assumptions {\rm (SE1)} and {\rm (SE2)} be fulfilled.
Then, for all $N$ sufficiently large,
the system~\eqref{linear-eq-SE} is uniquely solvable,
and the error of the numerical solution $\yn$
of Eq.~\eqref{Def-UsolSE}
is estimated as
\begin{equation*}
\max_{i=1,\,\ldots,\,n}\left\{
\max_{a\leq t\leq b}|y_i(t)-y_i^{(N)}(t)|\right\}
\leq C\exp\left({-\sqrt{\pi d \alpha N}}\right).
\end{equation*}
\end{thm}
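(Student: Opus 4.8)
The plan is to run the classical consistency-plus-stability argument for Nyström discretizations of Volterra equations. Write $\Vol$ for the exact indefinite integration, $\Vol[\mathbd f](t)=\int_a^t\mathbd f(s)\diff s$ (acting componentwise), and $\VolSEn$ for the SE-Sinc indefinite integration operator, $\VolSEn[\mathbd f](t)=\sum_{j=-N}^N\mathbd f(\tSE_j)\SEtDiv(jh)J(j,h)(\SEtInv(t))$. First, by Theorem~\ref{thm:SE-sol-regularity}, the hypotheses (SE1) and (SE2) already imply that \eqref{Vol} has a unique solution $\mathbd y$ with each $y_i\in\MC_{\alpha}(\SEt(\domD_d))\subset\Hinf(\SEt(\domD_d))$, so (SE3) comes for free. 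Writing \eqref{Vol} as $\mathbd y=\mathbd r+\Vol[\mathbd g+K\mathbd y]$ and the scheme \eqref{Def-UsolSE} as $\yn=\mathbd r+\VolSEn[\mathbd g+K\yn]$, I would introduce the auxiliary function $\tilde{\mathbd z}:=\mathbd r+\VolSEn[\mathbd g+K\mathbd y]$ obtained by feeding the exact solution through the discretized operator. Subtracting gives $\yn-\tilde{\mathbd z}=\VolSEn[K(\yn-\mathbd y)]$, i.e.\ $(\mathcal{I}-\mathcal{K}_N)(\yn-\mathbd y)=\tilde{\mathbd z}-\mathbd y$, where $\mathcal{I}$ is the identity and $\mathcal{K}_N$ is the finite-rank operator $\mathbd w\mapsto\VolSEn[K\mathbd w]$.

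The consistency term $\tilde{\mathbd z}-\mathbd y$ is estimated directly from Theorem~\ref{Thm:SE-Sinc-indef}. Its $i$-th component is precisely the error of the SE-Sinc indefinite integration applied to $f_i:=g_i+\sum_{l=1}^n k_{il}y_l$, because $\int_a^t f_i(s)\diff s=y_i(t)-r_i$ by \eqref{Vol}. Since $\LC_{\alpha}(\SEt(\domD_d))$ is closed under finite sums and under multiplication by functions in $\Hinf(\SEt(\domD_d))$, the decomposition $f_iQ=g_iQ+\sum_{l=1}^n(k_{il}Q)\,y_l$ together with (SE1), (SE2) and $y_l\in\Hinf(\SEt(\domD_d))$ shows $f_iQ\in\LC_{\alpha}(\SEt(\domD_d))$. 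Hence Theorem~\ref{Thm:SE-Sinc-indef} gives $\max_{a\le t\le b}|y_i(t)-\tilde z_i(t)|\le C\exp(-\sqrt{\pi d\alpha N})$, which is exactly the claimed rate. Note that, unlike the generalized Sinc approximation underlying the collocation method, the Sinc indefinite integration carries no $\sqrt N$ factor, which is why the Nyström bound is slightly sharper than the collocation bound in Theorem~\ref{thm:SE-Sinc-converge}.

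What remains — and what I expect to be the main obstacle — is the stability estimate: that $\mathcal{I}-\mathcal{K}_N$ is invertible for all sufficiently large $N$ with $\|(\mathcal{I}-\mathcal{K}_N)^{-1}\|$ bounded independently of $N$; granting this, $\yn-\mathbd y=(\mathcal{I}-\mathcal{K}_N)^{-1}(\tilde{\mathbd z}-\mathbd y)$ finishes the proof. Restricting the equation $(\mathcal{I}-\mathcal{K}_N)\mathbd w=\mathbd v$ to the Sinc points $\tSE_{-N},\dots,\tSE_N$ reproduces exactly the linear system \eqref{linear-eq-SE}, so invertibility of $\mathcal{I}-\mathcal{K}_N$ is equivalent to invertibility of the matrix there; and since $|k_{ij}(\tSE_j)|\le L|Q(\tSE_j)|^{\alpha-1}$ by (SE1) while $\SEtDiv(jh)=Q(\tSE_j)/(b-a)$, $\sup_x|J(j,h)(x)|=O(h)$, and the rows of $hI^{(-1)}_N$ are uniformly bounded, the weighted sums $h\sum_j |Q(\tSE_j)|^{\alpha}$ stay $O(1)$ as $N\to\infty$ (this is where $\alpha>0$ enters), yielding $\|\VolSEn[K\mathbd w]\|_\infty\le C\max_i|\mathbd w(\tSE_i)|$ uniformly in $N$; this reduces the operator-norm bound to a uniform bound on the inverse of the matrix in \eqref{linear-eq-SE}. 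To obtain the latter I would argue as in the proof of Theorem~\ref{thm:SE-Sinc-converge}, where the same matrix occurs: because $k_{ij}$ has an integrable singularity at the endpoints, $\Vol K$ is a Volterra operator of spectral radius zero, so $\mathcal{I}-\Vol K$ is boundedly invertible; $\mathcal{K}_N\to\Vol K$ pointwise on $\Hinf(\SEt(\domD_d))^n$ by Theorem~\ref{Thm:SE-Sinc-indef}, the family $\{\mathcal{K}_N\}$ is collectively compact, and the standard (Anselone-type) perturbation theory then gives the uniform invertibility for all large $N$. The delicate point in that last step is controlling the functions $t\mapsto J(j,h)(\SEtInv(t))$ near $t=a,b$, where $(\SEtInv)'$ is unbounded; the cleanest route is to reuse whatever uniform stability estimate already underlies the proof of Theorem~\ref{thm:SE-Sinc-converge}.
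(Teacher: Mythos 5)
Your overall strategy coincides with the paper's: obtain the regularity of $\mathbd{y}$ from Theorem~\ref{thm:SE-sol-regularity}, split the error into a consistency term and a stability term via
$\mathbd{y}-\yn=(\mathcal{I}-\VolSEn)^{-1}\bigl\{(\Jint\mathbd{g}-\JnSE\mathbd{g})+(\Vol\mathbd{y}-\VolSEn\mathbd{y})\bigr\}$
(your $\tilde{\mathbd{z}}-\mathbd{y}$ is exactly this consistency term, and your $\mathcal{K}_N$ is the paper's $\VolSEn$), estimate the consistency term by Theorem~\ref{Thm:SE-Sinc-indef} applied to $f_i=g_i+\sum_l k_{il}y_l$ after checking $f_iQ\in\LC_{\alpha}(\SEt(\domD_d))$, and establish uniform invertibility of $\mathcal{I}-\VolSEn$ by a compactness-based perturbation argument. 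The consistency half of your argument is complete and correct, including the observation that this is where the absence of the $\sqrt{N}$ factor comes from.

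The stability half is where your plan has a genuine gap, in two respects. First, a circularity: you propose to ``argue as in the proof of Theorem~\ref{thm:SE-Sinc-converge}'' and to reuse the stability estimate underlying it, but in the paper the collocation result is \emph{derived from} the Nystr\"{o}m result you are proving (via $\tilde{y}_i^{(N)}=\ProjSE[y_i^{(N)}]$ and the inequality~\eqref{leq:SE-colloc-estim}), so there is no independent stability estimate there to borrow. Second, the Anselone-type route you invoke requires verifying that the family $\{\VolSEn\}_N$ is collectively compact, and you yourself identify the obstruction --- the functions $t\mapsto J(j,h)(\SEtInv(t))$ have derivatives blowing up at $t=a,b$ --- without resolving it. The paper sidesteps collective compactness entirely: it applies Atkinson's perturbation theorem (Theorem~\ref{Thm:Atkinson-Nystroem}), whose operative hypothesis is the quantitative bound $\|(\Vol-\VolSEn)\VolSEn\|_{\mathcal{L}(\Csp,\Csp)}\leq C/\sqrt{N}$, supplied by a dedicated lemma (an extension of \cite[Lemma~6.5]{okayama11:_theor}); beyond that, only compactness of each individual $\VolSEn$ (Arzel\'{a}--Ascoli) and pointwise convergence $\VolSEn\mathbd{f}\to\Vol\mathbd{f}$ (to bound $\|\VolSEn\|$ uniformly) are needed. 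The missing ingredient in your write-up is precisely that lemma, or an equivalent proof that $\|(\Vol-\VolSEn)\VolSEn\|\to 0$.
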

\begin{thm}
\label{thm:converge-DE-Sinc-Nystroem}
Let
the assumptions {\rm (DE1)} and {\rm (DE2)} be fulfilled.
Then, for all $N$ sufficiently large,
the system~\eqref{linear-eq-DE} is uniquely solvable,
and the error of the numerical solution $\yn$
of Eq.~\eqref{Def-UsolDE}
is estimated as
\begin{equation*}
\max_{i=1,\,\ldots,\,n}\left\{
\max_{a\leq t\leq b}|y_i(t)-y_i^{(N)}(t)|\right\}
\leq C
\frac{\log(2 d n/\alpha)}{N}
 \exp\left\{\frac{-\pi d N}{\log(2 d N/\alpha)}\right\}.
\end{equation*}
\end{thm}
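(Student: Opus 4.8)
The plan is to mimic the standard Nystr\"om-method error analysis, combining the quadrature-error estimate of Theorem~\ref{Thm:DE-Sinc-indef} with a perturbation argument for the associated linear system. First I would recast both the exact equation~\eqref{Vol} and the approximate equation~\eqref{Def-UsolDE} as operator equations on the space of sampled values. Writing $\mathcal{K}$ for the Volterra integral operator $(\mathcal{K}\mathbd{u})(t)=\int_a^t K(s)\mathbd{u}(s)\diff s$ and $\mathcal{K}_N$ for its DE-Sinc-indefinite-integration discretization, the exact solution satisfies $\mathbd{y}=\mathbd{r}+\mathcal{K}\mathbd{y}+\int_a^t\mathbd{g}$, while $\yn$ satisfies $\yn=\mathbd{r}+\mathcal{K}_N\yn+(\text{quadrature of }\mathbd{g})$; evaluating at the nodes $\tDE_i$ gives precisely the linear system~\eqref{linear-eq-DE} for the coefficient vector $\mathbd{Y}^{\textDE}$. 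By Theorem~\ref{thm:DE-sol-regularity}, assumptions (DE1) and (DE2) already guarantee $y_i\in\MC_{\alpha}(\DEt(\domD_d))\subset\Hinf(\DEt(\domD_d))$, so (DE3) holds and all hypotheses needed to apply the quadrature theorem to the integrand $\{\mathbd{g}(s)+K(s)\mathbd{y}(s)\}$ are in force: note $(g_iQ)\in\LC_\alpha$ and $(k_{ij}Q)\in\LC_\alpha$ with $y_j\in\Hinf$ imply $(k_{ij}y_j)Q\in\LC_\alpha$, so each component integrand times $Q$ lies in $\LC_\alpha(\DEt(\domD_d))$.

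Next I would establish unique solvability of~\eqref{linear-eq-DE} for large $N$. The key point is that $\mathcal{K}$ is a Volterra operator, so $I-\mathcal{K}$ is invertible on $C[a,b]^n$; the discrete operator $I_n\otimes I_N-I_n\otimes\{hI^{(-1)}_ND_N^{\textDE}\}[K_{ij}^{\textDE}]$ is a consistent discretization, and by the collectively-compact / norm-convergence framework (or, more elementarily here, by showing $\|\mathcal{K}-\mathcal{K}_N\|\to 0$ as an operator between the relevant finite-dimensional and function spaces via Theorem~\ref{Thm:DE-Sinc-indef}) the inverse of the discrete operator exists and is uniformly bounded in $N$ once $N$ is large enough. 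This yields a bound $\|\mathbd{Y}^{\textDE}-\mathbd{y}(\text{sampled})\|_\infty\le C\,\varepsilon_N$, where $\varepsilon_N=\frac{\log(2dN/\alpha)}{N}\exp\{-\pi dN/\log(2dN/\alpha)\}$ is the quadrature-error bound from Theorem~\ref{Thm:DE-Sinc-indef} applied to the integrand. Then I would propagate this to the continuous error: for $a\le t\le b$,
\begin{align*}
y_i(t)-y_i^{(N)}(t)
&=\sum_{j=-N}^N\Bigl\{\bigl[\mathbd{g}(\tDE_j)+K(\tDE_j)\mathbd{y}(\tDE_j)\bigr]_i-\bigl[\mathbd{g}(\tDE_j)+K(\tDE_j)\yn(\tDE_j)\bigr]_i\Bigr\}\DEtDiv(jh)J(j,h)(\DEtInv(t))\\
&\quad+\Bigl[\int_a^t\{\mathbd{g}(s)+K(s)\mathbd{y}(s)\}\diff s-\text{(DE-Sinc quadrature of same)}\Bigr]_i,
\end{align*}
so $\max_t|y_i-y_i^{(N)}|$ is bounded by (quadrature error) $+$ (a factor times $\|\mathbd{Y}^{\textDE}-\mathbd{y}(\text{sampled})\|_\infty$), where the factor is $\max_t\sum_j|\DEtDiv(jh)J(j,h)(\DEtInv(t))|\cdot\max_{ij}\sup|k_{ij}|$; the $J$-factor sum is $\Order(1)$ (uniformly in $N$) by the standard estimate $\sum_j|J(j,h)(x)|\le C$ together with the decay of $\DEtDiv(jh)$, which is where the DE structure is used. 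Assembling, both contributions are $\Order(\varepsilon_N)$, giving the claimed bound.

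The main obstacle I anticipate is the uniform-in-$N$ invertibility of the discrete system~\eqref{linear-eq-DE} and the uniform bound on its inverse. The Volterra (lower-triangular) structure makes $I-\mathcal{K}$ automatically invertible, but the DE-Sinc quadrature matrix $hI^{(-1)}_ND_N^{\textDE}$ is \emph{not} triangular, so one cannot directly invoke triangularity of the discrete problem; instead one must show that $\mathcal{K}_N$ approximates $\mathcal{K}$ well enough in an appropriate operator norm so that a Neumann-series / perturbation argument applies, and this requires care because $\DEtDiv(jh)$ grows (doubly exponentially) near $j=\pm N$ while the $\LC_\alpha$ decay of the integrand compensates. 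Handling this compensation rigorously — essentially re-deriving, or carefully citing, the mapping properties behind Theorem~\ref{Thm:DE-Sinc-indef} in an operator-norm form rather than merely a pointwise form — is the technical heart of the argument. Once that uniform bound is in hand, the rest is bookkeeping with the already-established quadrature estimate.
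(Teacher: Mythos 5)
Your overall skeleton is the same as the paper's: write the exact and discretized problems as operator equations on $\Csp=\{C([a,b])\}^n$, identify the node values with the linear system~\eqref{linear-eq-DE}, use Theorem~\ref{thm:DE-sol-regularity} to get the regularity of $\mathbd{y}$ needed to apply Theorem~\ref{Thm:DE-Sinc-indef} to the integrand $\mathbd{g}+K\mathbd{y}$, and combine a stability bound with the quadrature error. (The paper organizes the last step slightly more cleanly via the identity $\mathbd{y}-\yn=(\mathcal{I}-\VolDEn)^{-1}\{(\Jint\mathbd{g}-\JnDE\mathbd{g})+(\Vol\mathbd{y}-\VolDEn\mathbd{y})\}$ rather than first bounding the nodal errors and then propagating, but these are equivalent.)

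The genuine gap is exactly the step you yourself flag as ``the technical heart'': the uniform-in-$N$ invertibility of $\mathcal{I}-\VolDEn$. Your proposed ``more elementary'' route --- showing $\|\Vol-\VolDEn\|_{\mathcal{L}(\Csp,\Csp)}\to 0$ and running a Neumann-series perturbation --- cannot work: a Nystr\"{o}m-type operator built from point evaluations never converges to $\Vol$ in the operator norm on $\Csp$ (take unit-norm continuous functions vanishing at all nodes $\tDE_j$ but close to $1$ elsewhere; then $\VolDEn\mathbd{f}=0$ while $\Vol\mathbd{f}$ stays bounded away from $0$), and Theorem~\ref{Thm:DE-Sinc-indef} is a statement about a single fixed $f$ with $(fQ)\in\LC_\alpha$, not an operator-norm statement. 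The paper's resolution is Atkinson's theorem (Theorem~\ref{Thm:Atkinson-Nystroem}), which demands only that the \emph{product} $\|(\Vol-\VolDEn)\VolDEn\|_{\mathcal{L}(\Csp,\Csp)}$ be small --- this is tractable because $\VolDEn$ has finite-dimensional range spanned by the smooth functions $J(j,h)(\DEtInv(\cdot))$, to which the quadrature estimate can be applied uniformly; the paper proves $\|(\Vol-\VolDEn)\VolDEn\|\leq C(\log(2dN/\alpha)/N)^2$ as an extension of an existing lemma, and additionally verifies compactness of $\VolDEn$ (Arzel\`{a}--Ascoli) and uniform boundedness of $\|\VolDEn\|$. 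Without this specific mechanism (or the equivalent collectively-compact argument carried out in detail), the unique solvability claim and the uniform bound on $(\mathcal{I}-\VolDEn)^{-1}$ --- and hence the whole error estimate --- remain unproved in your proposal.
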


\subsection{Discussion about the performance}
\label{subsec:discuss}

In view of the convergence rates shown above,
the DE-Sinc-Nystr\"{o}m method seems to be the best,
and this was then followed by
the DE-Sinc-collocation method,
the SE-Sinc-Nystr\"{o}m method,
and the SE-Sinc-collocation method.
However,
the DE-Sinc-collocation method (the second one) can be considered as
the best, or at least as useful as the DE-Sinc-Nystr\"{o}m method,
for the following reasons.
Firstly,
the difference of convergence between
the DE-Sinc-Nystr\"{o}m method
and the DE-Sinc-collocation method is quite small,
and actually it is almost indistinguishable in the numerical experiments
(see Figures~\ref{Fig:example1}--\ref{Fig:example2}
in Section~\ref{sec:numer_exam}).
Secondly,
compared to the the approximate solution of the
DE-Sinc-collocation method $\tyn$ (Eq.~\eqref{Def-DE-Sinc-colloc}),
that of the DE-Sinc-Nystr\"{o}m method $\yn$ (Eq.~\eqref{Def-UsolDE})
has time-consuming terms to evaluate.
All of the basis functions in $\tyn$ are elementary functions,
whereas the basis functions $J(j,h)$ in $\yn$
includes the special function $\Si(x)$.
Furthermore, $\tyn$ can be computed with $\Order(nN)$,
but $\yn$ needs $\Order(n^2 N)$
because a matrix-vector product is included in $\yn$.
Therefore, from the viewpoint of the computational cost,
the DE-Sinc-collocation method is better
than the DE-Sinc-Nystr\"{o}m method
(see also Table~\ref{tab:comp}).


\section{Numerical results}
\label{sec:numer_exam}

In this section, numerical examples of the SE/DE-Sinc-Nystr\"{o}m
methods and the SE/DE-Sinc-collocation methods are presented.
The computation was done on Mac OS X 10.6,
Mac Pro two 2.93~GHz 6-Core Intel Xeon with 32 GB DDR3 ECC SDRAM.
The computation programs were implemented
in C++ with double-precision floating-point arithmetic,
and compiled by GCC 4.0.1 with no optimization.
The linear systems~\eqref{linear-eq-SE} and~\eqref{linear-eq-DE}
are solved by using the LU decomposition.
In what follows, $\pim$ denotes an arbitrary positive number
less than $\pi$, and it was set as $\pim=3.14$ in actual computation.
Firstly, let us consider the following two examples.

\begin{exam}
\label{exam:3}
Consider the following initial value problem
(the Halm equation~\cite{polyanin03:_handb})
over the interval $[0,\,1]$:
\[
 (1+x^2)^2 y''(t) -2 y = 0,\quad
 y(0)=0,\quad y'(0)=1,
\]
which is equivalent to the system
\begin{align*}
&&y_1'(t) &= y_2(t),& y_1(0)&=0,&&\\
&&y_2'(t) &= \frac{2}{(1+x^2)^2} y_1(t), & y_2(0)&=1,&&
\end{align*}
whose solution is $y_1(t)=\sqrt{1+x^2}\sinh(\arctan x)$, $y_2(t)=y_1'(t)$.
\end{exam}
\begin{exam}\label{exam:2}
Consider the following initial value problem over the interval $[0,\,2]$:
\begin{align*}
&&y_1'(t) &= -y_1(t) + \frac{1}{2\sqrt{t}} y_2(t),& y_1(0)&=0,&&\\
&&y_2'(t) &= -\frac{1}{\sqrt{t}}y_1(t), & y_2(0)&=1,&&
\end{align*}
whose solution is $y_1(t)=\sqrt{t}\exp(-t)$, $y_2(t)=\exp(-t)$.
\end{exam}

\begin{figure}
\begin{center}
 \begin{minipage}{0.45\linewidth}
  \includegraphics[width=\linewidth]{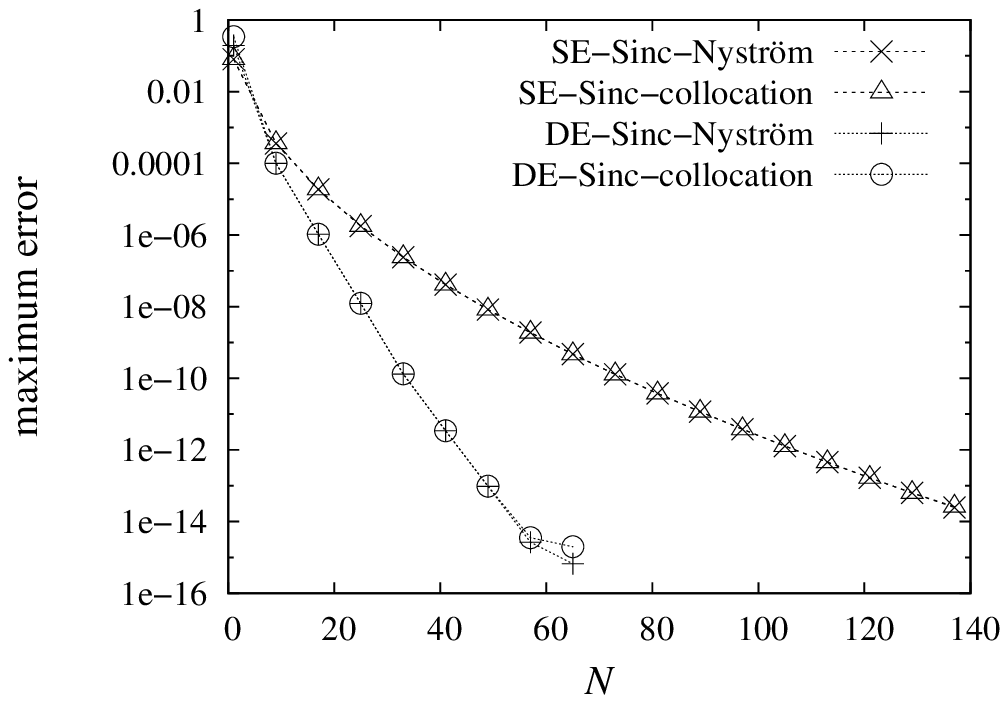}
  \caption{Errors in Example~\ref{exam:3}.}
  \label{Fig:example1}
 \end{minipage}
 \begin{minipage}{0.45\linewidth}
  \includegraphics[width=\linewidth]{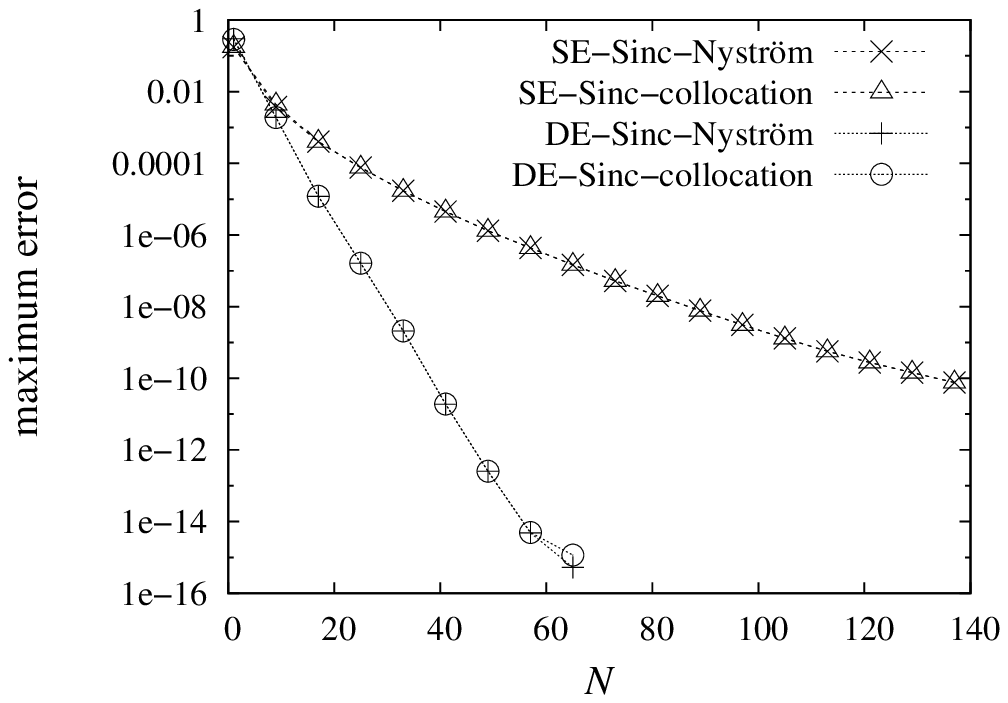}
  \caption{Errors in Example~\ref{exam:2}.}
  \label{Fig:example2}
\end{minipage}
\end{center}
\end{figure}
As for Example~\ref{exam:3}, the conditions (SE1) and (SE2) are satisfied
with $\alpha=1$ and $d=3\pim/4$.
In the DE case, let us set $p=\pim/(2\log 2)$ and
\begin{align*}
q &=\sqrt{\left\{1 + 7p^2 + \sqrt{(1 + 7p^2)^2 + (6p)^2}\right\}/2},
\end{align*}
and furthermore
set $x =-\left(1 - q\right)/(4p)$,
$y =3\left(1 -(1/q)\right)/4$, and
$d_{-} =\arcsin(y/\sqrt{x^2+y^2})$.
Then, the conditions (DE1) and (DE2) are satisfied with $\alpha=1$
and $d=d_{-}$.
As for Example~\ref{exam:2}, which is a harder example because of
the singularity at the origin,
(SE1) and (SE2) are satisfied
with $\alpha=1/2$ and $d=\pim$,
and (DE1) and (DE2) are satisfied with $\alpha=1/2$ and $d=\pim/2$.
The numerical errors are plotted
in Figures~\ref{Fig:example1} and~\ref{Fig:example2}, respectively.
In the graphs,
``maximum error'' denotes
the maximum absolute error at 999 equally-spaced points (say $t_l$)
on the interval $[a,\,b]$, i.e.,
\[
 \text{maximum error}
= \max_{i=1,\,\ldots,\,n}\Bigl\{
\max_{l=1,\,\ldots,\,999}|y_i(t_l) - \widehat{y}_i(t_l)|\Bigr\},
\]
where $\widehat{y}_i$ means each numerical solution.
From both figures, we can confirm the results
of Theorems~\ref{thm:SE-Sinc-converge}--\ref{thm:converge-DE-Sinc-Nystroem}.
More precisely as for the (newly-proposed)
DE-Sinc-collocation method,
its convergence rate is actually
much higher than that of the SE-Sinc-collocation method.
As described in Section~\ref{subsec:discuss},
although the theoretical rate
of the DE-Sinc-Nystr\"{o}m method
is a bit higher than that of the DE-Sinc-collocation method,
both rates are almost
indistinguishable in the numerical results.
Moreover,
as seen in Table~\ref{tab:comp},
the DE-Sinc-Nystr\"{o}m methods needs times
twice as much as the DE-Sinc-collocation method
to obtain $10^{-8}$ accuracy
(the same applies in the SE case).
At least from the result, we can conclude that the DE-Sinc-collocation
method is the most efficient.

\begin{table}\vskip-0.4\baselineskip
\caption{Computation times and $N$ needed to obtain $10^{-8}$ accuracy in Example~\ref{exam:2}.}
\label{tab:comp}
\begin{small}
\begin{center}
\begin{tabular}{ccccc}\hline
& SE-Sinc-Nystr\"{o}m & SE-Sinc-collocation &
DE-Sinc-Nystr\"{o}m & DE-Sinc-collocation \\ \hline
$N$ & 87 & 87 & 31 & 31 \\
time [s] & 0.281 & 0.137 & 0.107 & 0.050 \\ \hline
\end{tabular}
\end{center}\vskip-\baselineskip
\end{small}
\end{table}

In the examples above, all the assumptions (SE1), (SE2),
(DE1), and (DE2) are satisfied with some $\alpha$ and $d$.
Let us have a look at another case here.

\begin{exam}
\label{exam:4}
Set a function $F$ as
$F(t)=\sqrt{\cos(4\arctanh t) + \cosh(\pi)}$, and
consider the following initial value problem
over the interval $[-1,\,1]$:
\begin{align*}
&&y_1'(t) &= -\frac{2[t F^2(t) +\sin(4\arctanh t)]}{F(t)}
y_2(t) ,& y_1(-1)&=0,&&\\
&&y_2'(t) &= \frac{2[t F^2(t) +\sin(4\arctanh t)]}{F(t)}y_1(t), & y_2(-1)&=1,&&
\end{align*}
whose solution is $y_1(t)=\sin[(1-t^2)F(t)]$,
$y_2(t)=\cos[(1-t^2)F(t)]$.
\end{exam}

This is a quite hard example to solve numerically,
due to the bad behavior of $F$ at $t=\pm 1$
(non-regular points are densely distributed around the endpoints).
Fortunately, the assumptions (SE1) and~(SE2) are satisfied
with $\alpha=1$ and $d=\pim/2$,
but (DE1) and~(DE2) are not satisfied with any $d>0$
(we easily see $\alpha=1$, though).
Therefore,
Theorems~\ref{thm:DE-Sinc-converge}
and~\ref{thm:converge-DE-Sinc-Nystroem} cannot be used in this case.
However, according to the recent result~\cite{okayamant:_de_sinc},
even in such a case, DE's methods may achieve the same convergence rate
with that of SE, by setting $d=\arcsin(d_{\textSE}/\pi)$,
where $d_{\textSE}$ denotes SE's $d$.
We can in fact observe it in Figure~\ref{Fig:example4};
DE's methods seem to converge with the similar rate to that of SE.
Since the computational cost is the same as that of the previous examples,
we can consider that the DE-Sinc collocation method still
keeps the lead even in this case.


\begin{figure}
\begin{center}
 \begin{minipage}{0.45\linewidth}
  \includegraphics[width=\linewidth]{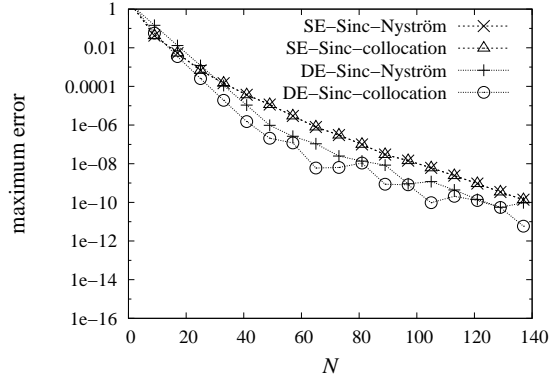}
  \caption{Errors in Example~\ref{exam:4}.}
  \label{Fig:example4}
 \end{minipage}
\end{center}
\end{figure}


\section{Proofs}
\label{sec:proofs}

\subsection{Proofs on the regularity of the solution}

The idea here is to apply the standard contraction mapping theorem,
which holds not only in the scalar case but also in the case of
a system of equations.
Set $\Xsp=\{\Hinf(\domD)\}^n$
and $\Ysp=\{\MC_{\alpha}(\domD)\}^n$,
and define
$\|\mathbd{f}\|_{\Xsp}=\max_{i=1,\,\ldots,\,n}\{\|f_i\|_{\Hinf(\domD)}\}$.
The goal is to show $\mathbd{y}\in\Ysp$,
but it is not easy because $\Ysp$ is not a Banach space.
For this reason, firstly $\mathbd{y}\in\Xsp$ is shown
($\Xsp$ is a Banach space),
and by using the result, $\mathbd{y}\in\Ysp$ is shown.
Let us introduce the integral operator
$\Jint:\Xsp \to \Xsp$ as
$\Jint [\mathbd{f}](t) = \int_a^t \mathbd{f}(s)\diff s$,
and $\Vol:\Xsp \to \Xsp$ as
\begin{equation*}
\Vol[\mathbd{f}](t) = \int_a^t K(s)\mathbd{f}(s)\diff s,
\end{equation*}
where $K$ satisfies the assumption (SE1) or (DE1).
If the operator is multiplied repeatedly,
it becomes a contraction map.

\begin{lem}
Let the assumption {\rm (SE1)} be fulfilled.
Then it holds for all positive integers $m$ and $z\in\SEt(\domD_d)$
that
\[
 |\Vol^m[\mathbd{f}](z)|
\leq
 \frac{\{n L(b-a)^{2\alpha-1}c_1\Bfunc(\psi_1(x),\alpha,\alpha)\}^m}
      {m!}\|\mathbd{f}\|_{\Xsp} [1,\,1,\,\ldots,\,1]^{\trans},
\]
where $x=\Re[\SEt(z)]$, $\psi_1(x)=(\tanh(x/2) + 1)/2$,
$\Bfunc(x,\alpha,\beta)$ is the incomplete beta function,
$L$ is the constant in Eq.~\eqref{Leq:LC-bounded-by-Q},
and $c_1$ is a constant depending only on $d$.
\end{lem}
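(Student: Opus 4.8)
The plan is to prove the bound by induction on $m$. For the base case $m=1$, I would estimate $|\Vol[\mathbd{f}](z)|$ directly by writing $\Vol[\mathbd{f}](z) = \int_a^z K(s)\mathbd{f}(s)\diff s$ along a suitable path in $\SEt(\domD_d)$ (naturally one parametrized via the SE transformation, i.e. integrating over $\zeta$ with $s=\SEt(\zeta)$). Since each entry $k_{ij}$ satisfies $|k_{ij}(s)| = |k_{ij}Q(s)/Q(s)| \leq L|Q(s)|^{\alpha-1}$ by assumption (SE1), and $|\mathbd{f}(s)|$ is bounded by $\|\mathbd{f}\|_{\Xsp}$ componentwise, each component of the integrand is dominated by $nL\|\mathbd{f}\|_{\Xsp}|Q(s)|^{\alpha-1}$. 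The remaining task is to bound $\int_a^z |Q(s)|^{\alpha-1}|\diff s|$ along the path; after the substitution $s=\SEt(\zeta)$ one gets $Q(\SEt(\zeta)) = (\SEt(\zeta)-a)(b-\SEt(\zeta))$, which has magnitude comparable to $(b-a)^2$ times something like $\operatorname{sech}^2$-type factors, and $|\SEtDiv(\zeta)|$ supplies another such factor; the constant $c_1$ depending only on $d$ absorbs the deviation of these quantities from their real-axis values (this is the role of ``$c_1$ is a constant depending only on $d$''). Collecting the real-variable part, the integral reduces to $\int_{-\infty}^{x} (\text{beta-type density}) \diff\xi$, and with the change of variables $u = \psi_1(\xi) = (\tanh(\xi/2)+1)/2$ this becomes exactly $(b-a)^{2\alpha-1}\Bfunc(\psi_1(x),\alpha,\alpha)$, giving the claimed form for $m=1$.

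For the inductive step, assume the bound holds for $m$. Then $\Vol^{m+1}[\mathbd{f}] = \Vol[\Vol^m[\mathbd{f}]]$, and I would apply the same path-integral estimate, now with $\Vol^m[\mathbd{f}]$ in place of $\mathbd{f}$. The key point is that the inductive hypothesis gives $|\Vol^m[\mathbd{f}](s)| \leq \frac{A^m}{m!}\|\mathbd{f}\|_{\Xsp}[1,\ldots,1]^{\trans}$ where $A = nL(b-a)^{2\alpha-1}c_1\Bfunc(\psi_1(x_s),\alpha,\alpha)$ and $x_s = \Re[\SEt(s)]$ — but crucially this bound depends on $s$ only through the monotone quantity $\psi_1(x_s)$. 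So along the integration path from $a$ to $z$, I would use that $\psi_1$ is increasing and $\Bfunc(\cdot,\alpha,\alpha)$ is increasing in its first argument, hence $\Bfunc(\psi_1(x_s),\alpha,\alpha) \leq \Bfunc(\psi_1(x),\alpha,\alpha)$ for $s$ ``before'' $z$ on the path. This lets me pull the $m$-dependent constant outside (with the argument frozen at the endpoint value $\psi_1(x)$) except for one factor, and the surviving integral $\int \Bfunc(\psi_1(x_s),\alpha,\alpha)^{\text{(from the new layer)}}$... — actually more carefully, one writes $\Bfunc(\psi_1(x_s),\alpha,\alpha)\,\divv[\text{beta density in } s]$ as a perfect differential: since $\frac{\divv}{\divv x_s}\Bfunc(\psi_1(x_s),\alpha,\alpha)$ is precisely the beta-type density (up to the constants already accounted), the nested integral $\int_{-\infty}^{x}\Bfunc(\psi_1(\xi),\alpha,\alpha)^{1}\cdot(\text{its own derivative})\diff\xi = \tfrac12\Bfunc(\psi_1(x),\alpha,\alpha)^2$, and inductively one obtains the factor $\frac{1}{(m+1)!}\Bfunc(\psi_1(x),\alpha,\alpha)^{m+1}$. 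This is the standard mechanism by which iterated Volterra operators produce the $1/m!$ Picard factor, here adapted to the weighted setting.

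The main obstacle I anticipate is the careful handling of the contour integration in the complex domain $\SEt(\domD_d)$: one must fix an appropriate path from $a$ to a general point $z$ (e.g. going along the image of a horizontal segment in $\domD_d$, or first along the real axis and then vertically), verify that $|Q(s)|^{\alpha-1}$ remains integrable near the endpoints $a$ and $b$ (which it does since $\alpha>0$, but the endpoints are on the boundary of the domain so some care with the limiting path is needed), and show that the ratio between the complex line integral and the real-axis integral is bounded by a constant $c_1=c_1(d)$ uniformly — this uses the geometry of $\SEt(\domD_d)$ and the fact that $d<\pi$ keeps the domain away from degeneracy. Once the ``$c_1(d)$'' comparison between the contour integral and the real-axis beta integral is established, the rest is the clean Picard-iteration bookkeeping described above. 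I would also note that the monotonicity of $\psi_1$ and of the incomplete beta function in its first argument must be invoked to justify freezing the argument at $\psi_1(x)$ when extracting constants from inside the iterated integrals; this is elementary but essential for the $1/m!$ to appear cleanly.
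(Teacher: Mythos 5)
Your sketch is correct and is essentially the argument the paper relies on: the paper omits the proof, citing it as a straightforward extension of Lemmas~5.4 and~5.6 of~\cite{okayama11:_theor}, and that argument is exactly your Picard iteration with the bound $|k_{ij}(s)|\leq L|Q(s)|^{\alpha-1}$, the comparison constant $c_1(d)$ between the contour and real-axis integrals, and the perfect-differential identity $\int \Bfunc(\psi_1(\xi),\alpha,\alpha)^m\,\divv\Bfunc(\psi_1(\xi),\alpha,\alpha)=\Bfunc(\psi_1(x),\alpha,\alpha)^{m+1}/(m+1)$ producing the $1/m!$ factor (your self-correction from ``freeze the argument at the endpoint'' to the perfect-differential mechanism is the right move, since freezing alone would lose the factorial).
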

\begin{lem}
\label{lem:Vol-contraction-DE}
Let the assumption {\rm (DE1)} be fulfilled.
Then it holds for all positive integers $m$ and $z\in\DEt(\domD_d)$
that
\[
 |\Vol^m[\mathbd{f}](z)|
\leq
 \frac{\{n L(b-a)^{2\alpha-1}c_2\Bfunc(\psi_2(x),\alpha,\alpha)\}^m}
      {m!}\|\mathbd{f}\|_{\Xsp} [1,\,1,\,\ldots,\,1]^{\trans},
\]
where $x=\Re[\DEt(z)]$, $\psi_2(x)=(\tanh(\pi \sinh(x)/2) + 1)/2$,
$L$ is the constant in Eq.~\eqref{Leq:LC-bounded-by-Q},
and $c_2$ is a constant depending only on $d$.
\end{lem}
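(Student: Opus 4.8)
The plan is to argue by induction on $m$, reducing both the base case and the inductive step to an estimate of the contour integral of $|Q|^{\alpha-1}$ along a path $\gamma_z$ from $a$ to $z$. I fix that path first: for $z\in\DEt(\domD_d)$ set $\zeta=\DEtInv(z)$ and $v=\Im\zeta$, and let $\gamma_z$ be the image under $\DEt$ of the horizontal ray $\{\,\xi+\imnum v:\ \xi\le\Re\zeta\,\}$, which runs from $a$ to $z$ inside $\DEt(\domD_d)$; by simple connectedness the value of $\Jint$, and of each iterate $\Vol^{k}$, is obtained by integrating along $\gamma_z$. Assumption (DE1) gives $|k_{ij}(s)|\le L|Q(s)|^{\alpha-1}$, hence for every $\mathbd{f}\in\Xsp$,
\[
\bigl|(K(s)\mathbd{f}(s))_i\bigr|\le\sum_{j=1}^{n}|k_{ij}(s)|\,|f_j(s)|\le nL\,\|\mathbd{f}\|_{\Xsp}\,|Q(s)|^{\alpha-1},
\]
and, since $\alpha>0$, the integral $\int_{\gamma_z}|Q(s)|^{\alpha-1}\,|\divv s|$ converges (the only singularities of its integrand are the integrable ones, of orders $\alpha-1$, at the endpoints $a,b$).

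The technical core is the inequality $|Q(s)|^{\alpha-1}\,|\divv s|\le c_2(b-a)^{2\alpha-1}\,\divv\beta(s)$ along $\gamma_z$, where $\beta(s):=\Bfunc(\psi_2(x(s)),\alpha,\alpha)$, $x(s)$ denoting the quantity $x$ of the lemma evaluated at $s$, chosen so that $\beta$ increases monotonically from $0$ at $s=a$ to $\Bfunc(\psi_2(x),\alpha,\alpha)$ at $s=z$; integrating then yields $\int_{\gamma_z}|Q(s)|^{\alpha-1}\,|\divv s|\le c_2(b-a)^{2\alpha-1}\Bfunc(\psi_2(x),\alpha,\alpha)$. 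I would establish it by parametrising $\gamma_z$ as $\xi\mapsto\DEt(\xi+\imnum v)$ and comparing, through the appropriate change of variable $u=u(\xi)$, the integrand on this ray with the integrand on the real axis at the real point lying at the same distance to the nearer endpoint; the real-axis integrand integrates exactly to $(b-a)^{2\alpha-1}\Bfunc(\psi_2(\cdot),\alpha,\alpha)$ via $t=\psi_2(u)$ (using $Q(\DEt(u))=(b-a)^2\psi_2(u)(1-\psi_2(u))$ and $\DEtDiv(u)=(b-a)\psi_2'(u)$), and the comparison factor $\bigl|Q(\DEt(\xi+\imnum v))/Q(\DEt(u))\bigr|^{\alpha-1}\,\bigl|\DEtDiv(\xi+\imnum v)/\DEtDiv(u)\bigr|\,(\divv\xi/\divv u)$ has to be shown bounded by a constant depending only on $d$. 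I expect this to be the main obstacle. Since $\psi^{\textDE}$ winds infinitely around the endpoints (so that $\DEt(\domD_d)$ is really a Riemann surface), the needed bounds on $\psi_2$, $\psi_2'$ and $Q\circ\DEt$ over $\domD_d$ are delicate; they are, however, precisely of the kind used in the error analysis of the DE--Sinc indefinite integration (Theorem~\ref{Thm:DE-Sinc-indef}), and I would import them from there. For the SE transformation the analogous comparison factor is trivially $O(1)$ because $\psi_1(\xi+\imnum v)\sim\exp(\xi+\imnum v)$ has modulus $\exp(\xi)$ independent of $v$; for DE the imaginary shift genuinely alters the double-exponential rate, which is exactly why the comparison point must be chosen with care.

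Granting the core estimate, the induction is routine. For $m=0$ the asserted bound reads $|f_i(z)|\le\|\mathbd{f}\|_{\Xsp}$, i.e.\ the definition of $\|\cdot\|_{\Xsp}$. Assuming the bound for $m-1$ and using $\Vol^{m}=\Vol\circ\Vol^{m-1}$ together with (DE1) and the inductive hypothesis,
\begin{align*}
\bigl|(\Vol^{m}[\mathbd{f}](z))_i\bigr|
&\le\int_{\gamma_z}\Bigl(\sum_{j=1}^{n}|k_{ij}(s)|\,\bigl|(\Vol^{m-1}[\mathbd{f}](s))_j\bigr|\Bigr)|\divv s|\\
&\le\int_{\gamma_z}nL\,|Q(s)|^{\alpha-1}\,\frac{\{nL(b-a)^{2\alpha-1}c_2\,\beta(s)\}^{m-1}}{(m-1)!}\,\|\mathbd{f}\|_{\Xsp}\,|\divv s|\\
&\le\frac{(nL)^{m}\bigl((b-a)^{2\alpha-1}c_2\bigr)^{m-1}}{(m-1)!}\,\|\mathbd{f}\|_{\Xsp}\,c_2(b-a)^{2\alpha-1}\!\int_{0}^{\Bfunc(\psi_2(x),\alpha,\alpha)}\!\!\!\beta^{\,m-1}\,\divv\beta\\
&=\frac{\{nL(b-a)^{2\alpha-1}c_2\,\Bfunc(\psi_2(x),\alpha,\alpha)\}^{m}}{m!}\,\|\mathbd{f}\|_{\Xsp},
\end{align*}
the core estimate being used in the third line. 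As the right-hand side is independent of $i$, the stated vector inequality follows. The $1/m!$ is the whole point: $\sum_{m\ge0}\{\,\cdot\,\}^{m}/m!<\infty$, so a sufficiently high iterate of $\Vol$ is a contraction on the Banach space $\Xsp$ and $I-\Vol$ is boundedly invertible there — which is how this lemma enters the proof of Theorem~\ref{thm:DE-sol-regularity}.
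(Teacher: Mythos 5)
Your induction skeleton is the right one --- it is precisely the mechanism of the source the paper cites for this lemma (the paper itself omits the proof as a ``straightforward extension''), and the algebra of the inductive step (the factor $n$ from the row sums, the $1/m!$ from $\int\beta^{m-1}\,\divv\beta$, the reduction of everything to a single path-integral estimate for $|Q|^{\alpha-1}$) is correct. The genuine gap is the ``core estimate,'' which you rightly identify as the crux and then defer. The difficulty is not merely that you leave it unproven: with $\psi_2$ taken literally from the statement, $\psi_2(x)=(\tanh(\pi\sinh(x)/2)+1)/2$ with $x=\Re[\DEtInv(z)]$, no choice of path and no choice of comparison point can establish it, because the target inequality is false. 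Take $m=1$, $n=1$, $\alpha=1$, $K\equiv L$, $\mathbd{f}\equiv 1$: the claim collapses to $|z-a|\leq c_2(b-a)\psi_2(x)$, i.e.\ $\bigl|\tfrac12\tanh(\tfrac{\pi}{2}\sinh(x+\imnum v))+\tfrac12\bigr|\leq c_2\,\psi_2(x)$, and for fixed $v\neq0$ the left side decays like $\exp(-\pi|\sinh x|\cos v)$ while the right side decays like $\exp(-\pi|\sinh x|)$ as $x\to-\infty$, so the ratio blows up double-exponentially. (More generally, \emph{every} rectifiable path from $a$ to $z$ inside the bounded domain gives $\int_{\gamma}|Q(w)|^{\alpha-1}\,|\divv w|\geq c\,|z-a|^{\alpha}$, which already exceeds $\Bfunc(\psi_2(x),\alpha,\alpha)\asymp\psi_2(x)^{\alpha}$ at such points.) You put your finger on the cause --- the imaginary shift genuinely alters the double-exponential rate --- but the cure is not a cleverer comparison point (your ``same distance to the nearer endpoint'' choice also destroys the monotonicity of $\beta$ along the path that the induction needs); it is that $\psi_2$ must itself degrade with $y=\Im[\DEtInv(z)]$, exactly as in Lemma~\ref{Lem:Bound-complex-DEtrans}, where $\psi_2(x)=\tfrac12\tanh(\tfrac{\pi\cos y}{2}\sinh x)+\tfrac12$. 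That $y$-dependent $\psi_2$ is the one actually used downstream in Lemma~\ref{lem:DE-Indef-in-MC}.

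With that corrected $\psi_2$, the proof closes along the lines you propose, and the naive comparison at \emph{equal abscissa} on the horizontal ray suffices. Writing $T(\zeta)=\tfrac12\tanh(\tfrac{\pi}{2}\sinh\zeta)+\tfrac12$, one has $Q(\DEt(\zeta))=(b-a)^2T(\zeta)(1-T(\zeta))$ and $\DEtDiv(\zeta)=\pi(b-a)\cosh\zeta\,T(\zeta)(1-T(\zeta))$, while $|T(\zeta)(1-T(\zeta))|=1/(4|\cosh(\tfrac{\pi}{2}\sinh\zeta)|^2)\leq c_d\,\psi_2(\xi)(1-\psi_2(\xi))$ and $|\cosh(\xi+\imnum v)|\leq\cosh\xi$ for $|v|\leq d<\pi/2$; since $\psi_2'(\xi)=\pi\cos v\,\cosh\xi\,\psi_2(\xi)(1-\psi_2(\xi))$, the integrand $|Q|^{\alpha-1}|\DEtDiv|$ at $\xi+\imnum v$ is bounded by $(c_d^{\alpha}/\cos d)$ times $(b-a)^{2\alpha-1}[\psi_2(\xi)(1-\psi_2(\xi))]^{\alpha-1}\psi_2'(\xi)$, which integrates exactly to $(b-a)^{2\alpha-1}\Bfunc(\psi_2(x),\alpha,\alpha)$ with a constant depending only on $d$. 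Without locating this correction, the proposal does not prove the lemma; with it, your induction goes through verbatim.
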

These lemmas are straightforward extension
from the existing ones~\cite[Lemmas~5.4 and~5.6]{okayama11:_theor},
and the proofs are omitted.
Then in both cases it holds that
\[
 \|\Vol^m \mathbd{f}\|_{\Xsp}\leq
\frac{\{n L (b-a)^{2\alpha - 1}c_i \Bfunc(1,\alpha,\alpha)\}^m}
      {m!}\|\mathbd{f}\|_{\Xsp},
\]
and thus for sufficiently large $m$, $\Vol^m$ is a contraction map,
from which we have the next theorem.

\begin{thm}
\label{thm:Hinf-Vol-SE}
Let the assumptions {\rm (SE1)} and {\rm (SE2)} be fulfilled.
Then Eq.~\eqref{Vol} has a unique solution $\mathbd{y}\in\Xsp$,
i.e.,
$y_i\in\Hinf(\SEt(\domD_d))$ for $i=1,\,\ldots,\,n$.
\end{thm}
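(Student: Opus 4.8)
\textbf{Proof proposal for Theorem~\ref{thm:Hinf-Vol-SE}.}
The plan is to recast Eq.~\eqref{Vol} as a fixed-point equation on the Banach space $\Xsp=\{\Hinf(\SEt(\domD_d))\}^n$ and apply the contraction mapping theorem in the form valid for an iterated map. First I would define $\Phi:\Xsp\to\Xsp$ by $\Phi[\mathbd{f}](t)=\mathbd{r}+\Jint[\mathbd{g}](t)+\Vol[\mathbd{f}](t)$, so that a fixed point of $\Phi$ is precisely a solution of Eq.~\eqref{Vol}. Before anything else I must check that $\Phi$ actually maps $\Xsp$ into itself: this requires that $\Jint[\mathbd{g}]$ and $\Vol[\mathbd{f}]$ be analytic and bounded on $\SEt(\domD_d)$ for $\mathbd{f}\in\Xsp$. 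Analyticity follows since indefinite integration along paths in the simply-connected domain $\SEt(\domD_d)$ preserves analyticity; boundedness of $\Vol[\mathbd{f}]$ is the content of the preceding lemma with $m=1$ (the bound $nL(b-a)^{2\alpha-1}c_1\Bfunc(\psi_1(x),\alpha,\alpha)\|\mathbd{f}\|_{\Xsp}\le nL(b-a)^{2\alpha-1}c_1\Bfunc(1,\alpha,\alpha)\|\mathbd{f}\|_{\Xsp}$ is finite because $\Bfunc(1,\alpha,\alpha)$ is finite for $\alpha>0$), and boundedness of $\Jint[\mathbd{g}]$ follows by the identical estimate applied to $\mathbd{g}$ using assumption (SE2) in place of the product $K\mathbd{f}$ — i.e.\ replacing $nL$ by a constant coming from the $\LC_{\alpha}$-bounds on the $g_iQ$. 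The constant $\mathbd{r}$ is trivially in $\Xsp$.

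Next I would observe that $\Phi$ is affine, $\Phi[\mathbd{f}]-\Phi[\mathbd{g}]=\Vol[\mathbd{f}-\mathbd{g}]$, so its $m$-th iterate satisfies $\Phi^m[\mathbd{f}]-\Phi^m[\mathbd{g}]=\Vol^m[\mathbd{f}-\mathbd{g}]$. By the displayed consequence of the lemma,
\[
\|\Phi^m[\mathbd{f}]-\Phi^m[\mathbd{g}]\|_{\Xsp}=\|\Vol^m[\mathbd{f}-\mathbd{g}]\|_{\Xsp}\le\frac{\{nL(b-a)^{2\alpha-1}c_1\Bfunc(1,\alpha,\alpha)\}^m}{m!}\|\mathbd{f}-\mathbd{g}\|_{\Xsp}.
\]
Since the factorial in the denominator dominates the $m$-th power in the numerator, the Lipschitz constant of $\Phi^m$ tends to $0$ as $m\to\infty$; fix any $m$ for which it is strictly less than $1$. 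Then $\Phi^m$ is a contraction on the Banach space $\Xsp$, hence has a unique fixed point $\mathbd{y}\in\Xsp$. A standard argument shows this is also the unique fixed point of $\Phi$ itself: if $\Phi^m[\mathbd{y}]=\mathbd{y}$, then $\Phi^m[\Phi[\mathbd{y}]]=\Phi[\Phi^m[\mathbd{y}]]=\Phi[\mathbd{y}]$, so $\Phi[\mathbd{y}]$ is also a fixed point of $\Phi^m$ and therefore equals $\mathbd{y}$; uniqueness for $\Phi$ follows because any fixed point of $\Phi$ is a fixed point of $\Phi^m$. This $\mathbd{y}\in\Xsp$ is by definition the unique solution of Eq.~\eqref{Vol} with $y_i\in\Hinf(\SEt(\domD_d))$ for each $i$.

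I do not expect a serious obstacle here, since the lemmas do the analytic heavy lifting; the only point requiring a little care is the self-map property, specifically verifying that $\Jint[\mathbd{g}]$ lands in $\Hinf$ — which is where assumption (SE2) enters and is handled by the same incomplete-beta estimate as in the lemma. One should also note in passing that the solution of the Volterra equation~\eqref{Vol} coincides with the solution of the initial value problem~\eqref{Ini}, so that this theorem is exactly the boundedness ($\Hinf$) half of Theorem~\ref{thm:SE-sol-regularity}; the sharper membership $y_i\in\MC_{\alpha}(\SEt(\domD_d))$ will then be bootstrapped from $\mathbd{y}\in\Xsp$ in the subsequent argument, as announced.
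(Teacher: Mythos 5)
Your proposal is correct and follows essentially the same route as the paper: the factorial bound on $\Vol^m$ from the preceding lemma makes some iterate a contraction on the Banach space $\Xsp$, and the only substantive point is that $\Jint\mathbd{g}\in\Xsp$, which you obtain from the incomplete-beta estimate under (SE2) — exactly the mechanism the paper invokes via Lemma~\ref{lem:SE-Indef-in-MC} (and reproduces explicitly in the proof of Lemma~\ref{lem:DE-Indef-in-MC}). You merely spell out the standard fixed-point bookkeeping (affineness of $\Phi$, passage from a fixed point of $\Phi^m$ to one of $\Phi$) that the paper leaves implicit.
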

\begin{thm}
\label{thm:Hinf-Vol-DE}
Let the assumptions {\rm (DE1)} and {\rm (DE2)} be fulfilled.
Then Eq.~\eqref{Vol} has a unique solution $\mathbd{y}\in\Xsp$,
i.e.,
$y_i\in\Hinf(\DEt(\domD_d))$ for $i=1,\,\ldots,\,n$.
\end{thm}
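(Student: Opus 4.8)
The plan is to apply the contraction-mapping theorem in the Banach space $\Xsp=\{\Hinf(\DEt(\domD_d))\}^n$, exactly as set up in the preceding discussion, using Lemma~\ref{lem:Vol-contraction-DE} as the key estimate. First I would rewrite the Volterra equation~\eqref{Vol} as a fixed-point problem $\mathbd{y}=\Phi[\mathbd{y}]$, where $\Phi[\mathbd{f}](t)=\mathbd{r}+\Jint[\mathbd{g}](t)+\Vol[\mathbd{f}](t)$. One must first check that $\Phi$ maps $\Xsp$ into $\Xsp$: the constant vector $\mathbd{r}$ is trivially in $\Xsp$; the term $\Jint[\mathbd{g}]$ lies in $\Xsp$ because assumption (DE2) gives $g_iQ\in\LC_\alpha(\DEt(\domD_d))$, so each $g_i$ is integrable near the endpoints and $\Jint[\mathbd{g}]$ is bounded and analytic on $\DEt(\domD_d)$ (this is essentially the content of Theorem~\ref{Thm:DE-Sinc-indef}'s hypotheses, or can be seen directly from the $\LC_\alpha$ bound on $g_iQ$); and $\Vol$ maps $\Xsp$ to $\Xsp$ by assumption (DE1) together with the $m=1$ case of Lemma~\ref{lem:Vol-contraction-DE}. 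Since $\Phi$ differs from the linear operator $\Vol$ only by a fixed element of $\Xsp$, we have $\Phi^m[\mathbd{f}]-\Phi^m[\mathbd{g}]=\Vol^m[\mathbd{f}-\mathbd{g}]$ for every $m$.

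Next I would invoke the displayed consequence of Lemma~\ref{lem:Vol-contraction-DE}, namely
\[
\|\Vol^m\mathbd{f}\|_{\Xsp}\leq
\frac{\{n L(b-a)^{2\alpha-1}c_2\,\Bfunc(1,\alpha,\alpha)\}^m}{m!}\|\mathbd{f}\|_{\Xsp},
\]
which holds because $\psi_2(x)\in(0,1)$ for all real $x$ and $\Bfunc(\cdot,\alpha,\alpha)$ is increasing, so $\Bfunc(\psi_2(x),\alpha,\alpha)\leq\Bfunc(1,\alpha,\alpha)$. Writing $\lambda_m=\{n L(b-a)^{2\alpha-1}c_2\,\Bfunc(1,\alpha,\alpha)\}^m/m!$, we have $\lambda_m\to 0$ as $m\to\infty$, hence there is an $m_0$ with $\lambda_{m_0}<1$. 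Thus $\Phi^{m_0}$ is a contraction on the Banach space $\Xsp$, and by the contraction-mapping theorem it has a unique fixed point $\mathbd{y}\in\Xsp$. A standard argument then promotes this to $\Phi$ itself: if $\mathbd{y}=\Phi^{m_0}[\mathbd{y}]$, then $\Phi[\mathbd{y}]$ is also a fixed point of $\Phi^{m_0}$ (since $\Phi^{m_0}[\Phi[\mathbd{y}]]=\Phi[\Phi^{m_0}[\mathbd{y}]]=\Phi[\mathbd{y}]$), so by uniqueness $\Phi[\mathbd{y}]=\mathbd{y}$; and any fixed point of $\Phi$ is a fixed point of $\Phi^{m_0}$, so uniqueness for $\Phi$ follows from uniqueness for $\Phi^{m_0}$. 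Finally, a fixed point of $\Phi$ in $\Xsp$ is precisely a solution of Eq.~\eqref{Vol} with $y_i\in\Hinf(\DEt(\domD_d))$ for each $i$, which is the assertion.

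The main obstacle is really just the verification that $\Jint[\mathbd{g}]\in\Xsp$ — that the indefinite integral of $\mathbd{g}$ extends to a bounded analytic function on the whole transformed domain $\DEt(\domD_d)$, not merely on $(a,b)$. This requires using the full strength of the $\LC_\alpha$ condition on $g_iQ$ (boundedness and the correct endpoint decay rate $|Q(z)|^\alpha$) to control the integral along paths approaching the endpoints inside $\DEt(\domD_d)$; but this is an entirely routine estimate of the same flavor as the proofs cited for Lemmas in~\cite{okayama11:_theor}, and in fact the analogous SE statement (Theorem~\ref{thm:Hinf-Vol-SE}) is handled identically with $\psi_2$ replaced by $\psi_1$ and $c_2$ by $c_1$. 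Everything else is a verbatim application of the contraction principle, which, as emphasized in the text, is insensitive to whether $n=1$ or $n>1$.
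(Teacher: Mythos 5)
Your proposal is correct and follows essentially the same route as the paper: the contraction-mapping theorem on the Banach space $\Xsp$ with the $\Vol^m$ bound from Lemma~\ref{lem:Vol-contraction-DE} (maximized via $\Bfunc(\psi_2(x),\alpha,\alpha)\leq\Bfunc(1,\alpha,\alpha)$), plus the verification that $\Jint\mathbd{g}\in\Xsp$, which the paper isolates as Lemma~\ref{lem:DE-Indef-in-MC} and proves exactly in the way you sketch, namely by applying the $m=1$, $K=g$, $f\equiv 1$ case of Lemma~\ref{lem:Vol-contraction-DE} together with Lemma~\ref{Lem:Bound-complex-DEtrans}. You correctly identify that step as the only non-routine ingredient; the rest matches the paper's argument verbatim.
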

\begin{proof}
Before applying the contraction mapping theorem,
the only thing we have to show is
(SE2)/(DE2) $\Rightarrow$ $\Jint \mathbd{g}\in\Xsp$,
which is done by Lemmas~\ref{lem:SE-Indef-in-MC}
and~\ref{lem:DE-Indef-in-MC}
(since $\MC_{\alpha}(\domD)\subset \Hinf(\domD)$).
\end{proof}

The next lemma is a result for SE,
which completes the proof of Theorem~\ref{thm:Hinf-Vol-SE}.

\begin{lem}[Stenger~{\cite[Theorem 4.1.3]{stenger93:_numer}}]
\label{lem:SE-Indef-in-MC}
Let $gQ \in\LC_{\alpha}(\SEt(\domD_d))$,
and set $q(t)=\int_a^t g(s)\diff s$.
Then $q\in\MC_{\alpha}(\SEt(\domD_d))$.
\end{lem}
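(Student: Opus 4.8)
The plan is to show directly that $q(t)=\int_a^t g(s)\diff s$ satisfies the two H\"{o}lder-type bounds of Definition~\ref{Def:MC} on the Riemann surface $\SEt(\domD_d)$, using only the hypothesis $gQ\in\LC_{\alpha}(\SEt(\domD_d))$, which gives a constant $L$ with $|g(z)Q(z)|\le L|Q(z)|^{\alpha}$, i.e. $|g(z)|\le L|Q(z)|^{\alpha-1}=L|z-a|^{\alpha-1}|b-z|^{\alpha-1}$. First I would record that $q$ is analytic on $\SEt(\domD_d)$: this is immediate since $g$ is analytic there and $\SEt(\domD_d)$ is simply connected (a Riemann surface obtained by the conformal image of a strip), so the path integral is well defined; moreover $q$ is bounded near each endpoint because $|g(z)|\lesssim|z-a|^{\alpha-1}$ is integrable along a contour to $a$ (and similarly at $b$), which also shows $q\in\Hinf(\SEt(\domD_d))$.

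Next I would estimate $|q(z)-q(a)|=|\int_a^z g(s)\diff s|$ for $z$ near $a$. Choosing the contour of integration to be the image under $\SEt$ of a straight segment in the $\zeta$-plane (or, more simply, a path along which $|s-a|$ increases monotonically from $0$ to $|z-a|$ while $|b-s|$ stays comparable to $b-a$), the integrand is bounded by $C_1|s-a|^{\alpha-1}$ for a constant $C_1$ depending on $L$, $b-a$, and the geometry of $\domD_d$. Integrating $|s-a|^{\alpha-1}$ along such a path yields a bound of the form $(C_1/\alpha)|z-a|^{\alpha}$, which is exactly $|q(z)-q(a)|\le M|z-a|^{\alpha}$. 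The bound $|q(b)-q(z)|\le M|b-z|^{\alpha}$ near $b$ is obtained symmetrically, writing $q(b)-q(z)=\int_z^b g(s)\diff s$ and using $|g(s)|\lesssim|b-s|^{\alpha-1}$ with $|s-a|$ comparable to $b-a$ along the chosen contour. For $z$ away from both endpoints the two inequalities hold trivially after enlarging $M$, since $q$ is bounded and $|z-a|$, $|b-z|$ are bounded below there.

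The main obstacle is purely geometric rather than analytic: one must exhibit, for every $z\in\SEt(\domD_d)$, a rectifiable path from the endpoint to $z$ lying inside $\SEt(\domD_d)$ along which $|s-a|$ behaves monotonically (so that the elementary estimate $\int_0^{|z-a|}t^{\alpha-1}\diff t=|z-a|^{\alpha}/\alpha$ applies) and along which the \emph{other} factor $|b-s|$ does not degenerate. Because $\SEt(\domD_d)$ is the conformal image of a horizontal strip, near $z=a$ the domain looks like a sector opening to the right, and a segment in the preimage strip maps to such a well-behaved arc; making this precise — in particular controlling the length of the arc by a constant multiple of $|z-a|$ uniformly — is where the constant depending only on $d$ enters. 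Since the paper only needs the \emph{existence} of such a constant $M$, not its optimal value, this can be dispatched by a compactness argument on the strip together with the standard distortion estimates for $\SEt$. This is Stenger's argument, and I would simply cite \cite[Theorem 4.1.3]{stenger93:_numer} for the details.
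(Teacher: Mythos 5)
Your plan is essentially sound, but note first that the paper does not prove this lemma at all: it is imported verbatim from Stenger's book, which is exactly where your sketch also terminates, so on that level the two ``proofs'' coincide. The substantive comparison is with the paper's proof of the DE analogue, Lemma~\ref{lem:DE-Indef-in-MC}, whose argument transfers to the SE case word for word and which takes a genuinely different route from yours. You estimate $\int_a^z g(s)\diff s$ along a contour chosen in the $z$-plane so that $|s-a|$ grows monotonically, the arc length is $\Order(|z-a|)$, and $|b-s|$ stays bounded below; the uniformity of this geometry is the whole difficulty, and your proposal to dispatch it by ``a compactness argument on the strip'' is the one weak spot, since $\domD_d$ is unbounded and compactness alone gives nothing uniform --- you would need the explicit asymptotics of $\SEt$ near $\pm\infty$ anyway. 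The paper's route avoids contour geometry entirely: pull the integral back to the strip along the horizontal ray $(-\infty,x]+\imnum y$, use $|g(\SEt(\zeta))\SEtDiv(\zeta)|\leq L|Q(\SEt(\zeta))|^{\alpha}/(b-a)$ (since $\SEtDiv=Q\circ\SEt/(b-a)$) together with a uniform bound of $|Q\circ\SEt|$ on $\domD_d$ by a constant multiple of its value on $\mathbb{R}$, and obtain $|q(z)-q(a)|\leq L(b-a)^{2\alpha-1}c_1\Bfunc(\psi_1(x),\alpha,\alpha)$ --- this is precisely the $m=1$, $n=1$, $K=g$, $\mathbd{f}\equiv 1$ instance of the first contraction lemma. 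Then $\Bfunc(\psi_1(x),\alpha,\alpha)\leq\Bfunc(1,\alpha,\alpha)\{\psi_1(x)\}^{\alpha}$ and the SE counterpart of Lemma~\ref{Lem:Bound-complex-DEtrans}, namely $(b-a)\psi_1(x)\leq|\SEt(x+\imnum y)-a|=|z-a|$ (which for SE is an immediate triangle-inequality computation on $\mathrm{e}^{\zeta}/(1+\mathrm{e}^{\zeta})$, no separate lemma needed), give the H\"{o}lder bound at $a$ with an explicit constant; the endpoint $b$ is symmetric. Your approach buys geometric transparency and avoids the incomplete beta function; the pull-back approach buys the uniform constant for free and is the one the author actually writes out. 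Since you ultimately cite Stenger~\cite[Theorem 4.1.3]{stenger93:_numer} for the details, your proof is acceptable, but if you want it self-contained you should replace the compactness phrase by the explicit strip estimate above.
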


In the case of DE
(for Theorem~\ref{thm:Hinf-Vol-DE}), we need the next lemma.

\begin{lem}[Okayama et al.~{\cite[Lemma A.4]{okayama10:_sinc}}]
\label{Lem:Bound-complex-DEtrans}
For $x\in\mathbb{R}$ and $y\in (-\pi/2,\,\pi/2)$, it holds that
\[
\psi_2(x):=
 \frac{1}{2}\tanh\left(\frac{\pi\cos y}{2}\sinh x\right) +\frac{1}{2}
\leq \left|
 \frac{1}{2}\tanh\left(\frac{\pi}{2}\sinh (x+\imnum y)\right) + \frac{1}{2}
\right|.
\]
\end{lem}

Using Lemma~\ref{Lem:Bound-complex-DEtrans},
we show the following lemma
(DE version of Lemma~\ref{lem:SE-Indef-in-MC}).
\begin{lem}
\label{lem:DE-Indef-in-MC}
Let $gQ \in\LC_{\alpha}(\DEt(\domD_d))$,
and set $q(t)=\int_a^t g(s)\diff s$.
Then $q\in\MC_{\alpha}(\DEt(\domD_d))$.
\end{lem}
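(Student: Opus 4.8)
The plan is to adapt Stenger's proof of Lemma~\ref{lem:SE-Indef-in-MC}, using the DE-specific estimate of Lemma~\ref{Lem:Bound-complex-DEtrans} wherever Stenger exploits elementary properties of $\SEt$. First I record the two identities obtained directly from the definition of $\DEt$: $Q(\DEt(\zeta))=\frac{(b-a)^2}{4}/\cosh^2(\tfrac{\pi}{2}\sinh\zeta)$ and $\DEtDiv(\zeta)=\frac{(b-a)\pi}{4}\cosh\zeta/\cosh^2(\tfrac{\pi}{2}\sinh\zeta)$. Since $gQ\in\LC_{\alpha}(\DEt(\domD_d))$ gives $|g(z)|\le L|Q(z)|^{\alpha-1}$ (Definition~\ref{Def:LC}), these identities produce a constant $C_1$ with
\[
 \bigl|g(\DEt(\zeta))\DEtDiv(\zeta)\bigr|\le C_1\,|\cosh\zeta|\,\bigl|\cosh(\tfrac{\pi}{2}\sinh\zeta)\bigr|^{-2\alpha},\qquad\zeta\in\domD_d .
\]
Writing $\zeta=\xi+\imnum\eta$ with $|\eta|<d<\pi/2$, one has $|\cosh\zeta|\le\cosh\xi$ and $\Re[\tfrac{\pi}{2}\sinh\zeta]=\tfrac{\pi}{2}\cos\eta\sinh\xi$, so the right-hand side is majorized by an elementary function of $\xi$ decaying double-exponentially as $\xi\to\pm\infty$, uniformly in $\eta$. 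Because $a=\lim_{\Re\zeta\to-\infty}\DEt(\zeta)$ and $g=(gQ)/Q$ is analytic on $\DEt(\domD_d)$, the analytic continuation of $q$ may be written, along the horizontal segment in $\domD_d$, as $q(\DEt(\zeta))=\int_{-\infty}^{\Re\zeta}g(\DEt(\xi'+\imnum\eta))\DEtDiv(\xi'+\imnum\eta)\diff\xi'$, and the uniform decay bound shows $q$ is bounded on $\DEt(\domD_d)$; hence $q\in\Hinf(\DEt(\domD_d))$, with $q(a)=0$ and $q(b)=\int_a^b g(s)\diff s$.

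Next I prove the first Hölder inequality of Definition~\ref{Def:MC}, namely $|q(z)-q(a)|=|q(z)|\le M|z-a|^{\alpha}$, by splitting on the value of $\Re\zeta$ where $z=\DEt(\zeta)$. Fix $R>0$ so large that $\bigl|\cosh(\tfrac{\pi}{2}\sinh(\xi+\imnum\eta))\bigr|^{-2\alpha}\le 4^{2\alpha}\mathrm{e}^{\alpha\pi\cos\eta\sinh\xi}$ for all $\xi\le-R$ and $|\eta|<d$ (the left side behaves like $\mathrm{e}^{\alpha\pi\cos\eta\sinh\xi}$ there). For $\Re\zeta\le-R$, using $|\cosh(\xi'+\imnum\eta)|\le\cosh\xi'$ and $\frac{\mathrm{d}}{\mathrm{d}\xi'}\mathrm{e}^{\alpha\pi\cos\eta\sinh\xi'}=\alpha\pi\cos\eta\cosh\xi'\,\mathrm{e}^{\alpha\pi\cos\eta\sinh\xi'}$, the integral representation gives
\begin{align*}
 |q(\DEt(\zeta))|
 &\le C_1 4^{2\alpha}\int_{-\infty}^{\Re\zeta}\cosh\xi'\,\mathrm{e}^{\alpha\pi\cos\eta\sinh\xi'}\diff\xi'
 =\frac{C_1 4^{2\alpha}}{\alpha\pi\cos\eta}\,\mathrm{e}^{\alpha\pi\cos\eta\sinh(\Re\zeta)}\\
 &\le\frac{C_1 4^{2\alpha}}{\alpha\pi\cos d}\,\mathrm{e}^{\alpha\pi\cos\eta\sinh(\Re\zeta)},
\end{align*}
the boundary term at $\xi'=-\infty$ vanishing since $\cos\eta>0$. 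On the other hand, Lemma~\ref{Lem:Bound-complex-DEtrans} gives $|\DEt(\zeta)-a|=(b-a)\bigl|\tfrac12\tanh(\tfrac{\pi}{2}\sinh\zeta)+\tfrac12\bigr|\ge(b-a)\bigl(\tfrac12\tanh(\tfrac{\pi\cos\eta}{2}\sinh\xi)+\tfrac12\bigr)$, and for $\xi\le0$ the last factor equals $(1+\mathrm{e}^{-\pi\cos\eta\sinh\xi})^{-1}\ge\tfrac12\mathrm{e}^{\pi\cos\eta\sinh\xi}$, whence $|\DEt(\zeta)-a|^{\alpha}\ge(\tfrac{b-a}{2})^{\alpha}\mathrm{e}^{\alpha\pi\cos\eta\sinh(\Re\zeta)}$. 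Dividing the two estimates yields $|q(z)|\le M_1|z-a|^{\alpha}$ whenever $\Re\zeta\le-R$. For $\Re\zeta>-R$, the explicit form of $\DEt(\zeta)-a$ shows $|z-a|$ is bounded below by a positive constant, so $|q(z)|\le\|q\|_{\Hinf(\DEt(\domD_d))}\le M_2|z-a|^{\alpha}$; taking $M=\max\{M_1,M_2\}$ finishes the first inequality.

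The second inequality, $|q(b)-q(z)|\le M|b-z|^{\alpha}$, follows by symmetry: since $\DEt(-\zeta)=a+b-\DEt(\zeta)$, one has $b-\DEt(\zeta)=\DEt(-\zeta)-a$, and the substitution $s=\DEt(-w)$ turns $q(b)-q(\DEt(\zeta))=\int_{\DEt(\zeta)}^{b}g(s)\diff s$ into an integral of exactly the type just treated, with $g$ replaced by $s\mapsto g(a+b-s)$, which satisfies the same $\LC_{\alpha}$ bound on $\DEt(\domD_d)$ because $Q(a+b-z)=Q(z)$ and $\domD_d$ is symmetric about the imaginary axis. Hence $q\in\MC_{\alpha}(\DEt(\domD_d))$. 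I expect the main obstacle to be the middle step: a crude bound $|q(z)|\le\int|g\,\DEtDiv|$ replaces $q$ by an integral of moduli, and the triangle inequality does not by itself let one bound that integral below by $|\DEt(\zeta)-a|^{\alpha}$. The remedy is to estimate numerator and denominator separately by real elementary functions of $\Re\zeta$ whose quotient is controlled --- the integrand of $q$ by $\cosh\xi'\,\mathrm{e}^{\alpha\pi\cos\eta\sinh\xi'}$, which has an explicit antiderivative, and $|\DEt(\zeta)-a|$ from below via Lemma~\ref{Lem:Bound-complex-DEtrans}. The only delicate bookkeeping is that the parameter $\eta=\Im\zeta$ enters both sides through $\cos\eta$, so one must keep $\cos\eta\ge\cos d>0$ to ensure all constants are uniform over $|\eta|<d$; the double-exponential decay, the convergence of the improper integral, and the analyticity of $q$ are routine and parallel the SE case.
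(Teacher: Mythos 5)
Your proof is correct and follows essentially the same route as the paper's: both arguments rest on Lemma~\ref{Lem:Bound-complex-DEtrans} to bound $|z-a|$ from below by $(b-a)\psi_2(\Re\zeta)$, paired with an upper bound for $\bigl|\int_a^z g(w)\diff w\bigr|$ of order $\{\psi_2(\Re\zeta)\}^{\alpha}$. The only difference is packaging: the paper gets the latter bound by specializing Lemma~\ref{lem:Vol-contraction-DE} (with $m=1$, $n=1$, $f\equiv 1$, $K=g$) together with $\Bfunc(s,\alpha,\alpha)\leq\Bfunc(1,\alpha,\alpha)s^{\alpha}$, whereas you re-derive it directly from the explicit antiderivative of $\cosh\xi'\,\mathrm{e}^{\alpha\pi\cos\eta\sinh\xi'}$ with a split at $\Re\zeta=-R$, and you also spell out the symmetry argument at the endpoint $b$ that the paper omits.
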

\begin{proof}
Clearly $q\in\Hinf(\DEt(\domD_d))$ holds.
Let us show the H\"{o}lder continuity at the endpoint $a$
(showing it at $b$ is omitted, because it is quite similar).
Putting $n=1$,\,$f\equiv 1$,\,$K=g$ in Lemma~\ref{lem:Vol-contraction-DE},
we have
\[
\left|\int_a^{z} g(w)\diff w\right|
\leq L(b-a)^{2\alpha - 1} c_2 \Bfunc(\psi_2(x),\alpha,\alpha).
\]
Then it holds that
\[
 |q(z)-q(a)|
=\left|\int_a^z g(w)\diff w\right|
\leq L(b-a)^{2\alpha - 1} c_d \Bfunc(\psi_2(x),\alpha,\alpha)
\leq L(b-a)^{2\alpha - 1} c_d \Bfunc(1,\alpha,\alpha) \{\psi_2(x)\}^{\alpha}.
\]
Furthermore, from Lemma~\ref{Lem:Bound-complex-DEtrans},
it holds that
\[
 (b-a)\psi_2(x) \leq \left|
 \frac{b-a}{2}\tanh\left(\frac{\pi}{2}\sinh (x+\imnum y)\right) + \frac{b-a}{2}
\right|
=\left|\DEt(x+\imnum y) - a\right| = |z-a|.
\]
Thus there exists a constant $\tilde{L}$
such that
$|q(z)-q(a)|\leq \tilde{L}|z-a|^{\alpha}$
for all $z\in\DEt(\domD_d)$.
\qed
\end{proof}

Now showing $\mathbd{y}\in\Xsp$ is finished.
For $\mathbd{y}\in\Ysp$,
what is left is to show the H\"{o}lder continuity.
In view of the right hand side of Eq.~\eqref{Vol},
clearly $\mathbd{y}$ (on the left hand side)
is H\"{o}lder continuous of $\alpha$-order.
Hence Theorems~\ref{thm:SE-sol-regularity}
and~\ref{thm:DE-sol-regularity} are established.

\subsection{Proofs on convergence of the numerical solutions}

\subsubsection{SE/DE-Sinc-Nystr\"{o}m method}

Firstly, the SE-Sinc-Nystr\"{o}m method is considered.
Notice that in this subsection,
set $\Csp=\{C([a,\,b])\}^n$, and
all operators here are discussed on this function space.
Let us introduce the operator
$\JnSE$, which is an approximation of $\Jint$, as
\begin{align*}
\JnSE[\mathbd{f}](t) &= \sum_{j=-N}^N \mathbd{f}(\tSE_j)\SEtDiv(jh)J(j,h)(\SEtInv(t)),
\end{align*}
and $\VolSEn$ as $\VolSEn[\mathbd{f}](t)=\JnSE[K\mathbd{f}](t)$.
Then consider the following three equations:
\begin{align*}
(\mathcal{I} - \Vol)\mathbd{y} &= \mathbd{r} + \Jint \mathbd{g}
&& (\text{Eq.~\eqref{Vol}}),\\
(\mathcal{I} - \VolSEn)\yn &= \mathbd{r} + \JnSE \mathbd{g}
&& (\text{Eq.~\eqref{Def-UsolSE}}),\\
 (I_n\otimes I_N - I_n\otimes \{h I^{(-1)}_N D_N^{\textSE}\}[K_{ij}^{\textSE}])
\mathbd{Y}^{\textSE}
& = I_n\otimes \{h I^{(-1)}_N D_N^{\textSE}\}\mathbd{G}^{\textSE}+\mathbd{R}
&& (\text{Eq.~\eqref{linear-eq-SE}}).
\end{align*}
Using the standard arguments (e.g., see~\cite[Lemma~6.1]{okayama11:_theor}),
we can see the unique solvability
of Eq.~\eqref{linear-eq-SE} is equivalent to
that of Eq.~\eqref{Def-UsolSE}.
If the unique solvability of Eq.~\eqref{Def-UsolSE} is shown,
i.e., $(\mathcal{I} - \VolSEn)^{-1}$ exists,
we have
\begin{align*}
\mathbd{y} - \yn
&=(\mathcal{I}-\VolSEn)^{-1}\left\{
(\mathcal{I}-\VolSEn)\mathbd{y} - (\mathbd{r}+\JnSE \mathbd{g})
\right\}\\
&=(\mathcal{I}-\VolSEn)^{-1}\left\{
(\mathbd{r}+\Jint\mathbd{g} +\Vol\mathbd{y}) - \VolSEn\mathbd{y}
-\mathbd{r}-\JnSE\mathbd{g}
\right\}\\
&=(\mathcal{I}-\VolSEn)^{-1}\left\{
(\Jint\mathbd{g} - \JnSE\mathbd{g}) + (\Vol\mathbd{y} - \VolSEn\mathbd{y})
\right\},
\end{align*}
and finally using Theorem~\ref{Thm:SE-Sinc-indef},
the desired error estimate (Theorem~\ref{thm:converge-SE-Sinc-Nystroem})
is obtained.
Therefore, what is left is to show
the existence and boundedness of $(\mathcal{I} - \VolSEn)^{-1}$.
For the purpose, the next theorem is useful.

\begin{thm}[Atkinson~{\cite[Theorem~4.1.1]{atkinson97:_numer_solut}}]
\label{Thm:Atkinson-Nystroem}
Assume the following four conditions:
\begin{enumerate}
 \item Operators $\mathcal{X}$ and $\mathcal{X}_n$
are bounded operators on $\Csp$ to $\Csp$.
 \item The operator $(\mathcal{I} - \mathcal{X}):\Csp\to\Csp$
has a bounded inverse
$(\mathcal{I} - \mathcal{X})^{-1}:\Csp\to\Csp$.
 \item The operator $\mathcal{X}_n$ is compact on $\Csp$.
 \item The following inequality holds:
\begin{equation*}
\|(\mathcal{X}-\mathcal{X}_n)\mathcal{X}_n\|_{\mathcal{L}(\Csp,\Csp)}
<\frac{1}{\|(\mathcal{I} - \mathcal{X})^{-1}\|_{\mathcal{L}(\Csp,\Csp)}}.
\end{equation*}
\end{enumerate}
Then $(\mathcal{I} - \mathcal{X}_n)^{-1}$ exists
as a bounded operator on $\Csp$ to $\Csp$, with
\begin{equation}
\|(\mathcal{I} - \mathcal{X}_n)^{-1}\|_{\mathcal{L}(\Csp,\Csp)}
\leq \frac{1 + \|(\mathcal{I} - \mathcal{X})^{-1}\|_{\mathcal{L}(\Csp,\Csp)}
\|\mathcal{X}_n\|_{\mathcal{L}(\Csp,\Csp)}}
{1 - \|(\mathcal{I} - \mathcal{X})^{-1}\|_{\mathcal{L}(\Csp,\Csp)}\|(\mathcal{X}-\mathcal{X}_n)\mathcal{X}_n\|_{\mathcal{L}(\Csp,\Csp)}}.
\label{InEq:Bound-Inverse-Op}
\end{equation}
\end{thm}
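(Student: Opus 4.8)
The plan is to construct a bounded inverse of $\mathcal{I}-\mathcal{X}_n$ explicitly out of the resolvent $(\mathcal{I}-\mathcal{X})^{-1}$, using a single algebraic identity to do the bookkeeping and the compactness hypothesis to pass from a one-sided inverse to a genuine two-sided one. All operator manipulations below make sense because $\mathcal{X}$ and $\mathcal{X}_n$ are bounded on $\Csp$ (condition~1).

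First I would record the identity
\[
(\mathcal{I}+\mathcal{X}_n-\mathcal{X})(\mathcal{I}-\mathcal{X}_n)
=(\mathcal{I}-\mathcal{X})-(\mathcal{X}_n-\mathcal{X})\mathcal{X}_n
=(\mathcal{I}-\mathcal{X})(\mathcal{I}-\mathcal{E}_n),
\]
where $\mathcal{E}_n:=(\mathcal{I}-\mathcal{X})^{-1}(\mathcal{X}_n-\mathcal{X})\mathcal{X}_n$ (this is just expanding the product and using conditions~1 and~2). By condition~4 we have
\[
\|\mathcal{E}_n\|_{\mathcal{L}(\Csp,\Csp)}\le \|(\mathcal{I}-\mathcal{X})^{-1}\|_{\mathcal{L}(\Csp,\Csp)}\,\|(\mathcal{X}-\mathcal{X}_n)\mathcal{X}_n\|_{\mathcal{L}(\Csp,\Csp)}<1,
\]
so $\mathcal{I}-\mathcal{E}_n$ is invertible by a Neumann series with $\|(\mathcal{I}-\mathcal{E}_n)^{-1}\|\le 1/(1-\|\mathcal{E}_n\|)$. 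Consequently $\mathcal{L}_n:=(\mathcal{I}-\mathcal{E}_n)^{-1}(\mathcal{I}-\mathcal{X})^{-1}(\mathcal{I}+\mathcal{X}_n-\mathcal{X})$ is a bounded \emph{left} inverse of $\mathcal{I}-\mathcal{X}_n$; in particular $\mathcal{I}-\mathcal{X}_n$ is injective.

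Second I would upgrade injectivity to bijectivity. Since $\mathcal{X}_n$ is compact on the Banach space $\Csp$ (condition~3), the Riesz--Fredholm alternative shows that $\mathcal{I}-\mathcal{X}_n$ is a Fredholm operator of index zero, so injectivity forces surjectivity, and the open mapping theorem then produces a bounded two-sided inverse $(\mathcal{I}-\mathcal{X}_n)^{-1}$, which must coincide with $\mathcal{L}_n$. This is the only place compactness is genuinely used, and I expect it to be the conceptual heart of the argument: one cannot expand $(\mathcal{I}-\mathcal{X}_n)^{-1}$ directly in a Neumann series, because $\mathcal{X}_n$ need not be small, and in an infinite-dimensional space a left inverse alone does not guarantee a two-sided one without such extra structure.

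Finally I would extract the quantitative bound~\eqref{InEq:Bound-Inverse-Op}. Right-multiplying $(\mathcal{I}-\mathcal{X}_n)^{-1}=\mathcal{L}_n$ by $\mathcal{X}_n$ and using $(\mathcal{I}+\mathcal{X}_n-\mathcal{X})\mathcal{X}_n=\mathcal{X}_n+(\mathcal{X}_n-\mathcal{X})\mathcal{X}_n$ gives
\[
(\mathcal{I}-\mathcal{X}_n)^{-1}\mathcal{X}_n=(\mathcal{I}-\mathcal{E}_n)^{-1}(\mathcal{I}-\mathcal{X})^{-1}\bigl[\mathcal{X}_n+(\mathcal{X}_n-\mathcal{X})\mathcal{X}_n\bigr],
\]
so that, writing $A=\|(\mathcal{I}-\mathcal{X})^{-1}\|$, $B=\|\mathcal{X}_n\|$, $C=\|(\mathcal{X}-\mathcal{X}_n)\mathcal{X}_n\|$ (all norms in $\mathcal{L}(\Csp,\Csp)$),
\[
\|(\mathcal{I}-\mathcal{X}_n)^{-1}\mathcal{X}_n\|\le \frac{A(B+C)}{1-AC}.
\]
Then from $(\mathcal{I}-\mathcal{X}_n)^{-1}=\mathcal{I}+(\mathcal{I}-\mathcal{X}_n)^{-1}\mathcal{X}_n$ and the triangle inequality,
\[
\|(\mathcal{I}-\mathcal{X}_n)^{-1}\|\le 1+\frac{A(B+C)}{1-AC}=\frac{1+AB}{1-AC},
\]
which is exactly~\eqref{InEq:Bound-Inverse-Op}. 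The only subtlety in this last step is to split the numerator as $\mathcal{X}_n+(\mathcal{X}_n-\mathcal{X})\mathcal{X}_n$ rather than as $\mathcal{I}+\mathcal{X}_n-\mathcal{X}$, which is what makes the estimate collapse to the stated constant; the rest is routine.
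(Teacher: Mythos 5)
Your proof is correct: the identity $(\mathcal{I}+\mathcal{X}_n-\mathcal{X})(\mathcal{I}-\mathcal{X}_n)=(\mathcal{I}-\mathcal{X})(\mathcal{I}-\mathcal{E}_n)$, the Neumann-series inversion of $\mathcal{I}-\mathcal{E}_n$, the Fredholm-alternative upgrade from left inverse to two-sided inverse, and the final arithmetic all check out, and your approximate inverse $(\mathcal{I}-\mathcal{X})^{-1}(\mathcal{I}+\mathcal{X}_n-\mathcal{X})$ is exactly Atkinson's $\mathcal{I}+(\mathcal{I}-\mathcal{X})^{-1}\mathcal{X}_n$ in disguise. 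The paper does not prove this result itself (it is quoted from Atkinson's book), and your argument is essentially the standard one from that reference, differing only in that you recover the bound via $(\mathcal{I}-\mathcal{X}_n)^{-1}=\mathcal{I}+(\mathcal{I}-\mathcal{X}_n)^{-1}\mathcal{X}_n$ rather than by bounding $\|(\mathcal{I}-\mathcal{E}_n)^{-1}\|\,\|\mathcal{I}+(\mathcal{I}-\mathcal{X})^{-1}\mathcal{X}_n\|$ directly.
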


We need to show the four conditions in this theorem
as $\mathcal{X}=\Vol$ and $\mathcal{X}_n=\VolSEn$.
The first condition clearly holds,
and the second condition is known as a classical result.
The third condition immediately follows from the Arzel\'{a}--Ascoli theorem.
The fourth condition is shown by the next lemma,
which is
straightforward extension
from the existing one~\cite[Lemma~6.5]{okayama11:_theor}.

\begin{lem}
Let the assumption {\rm (SE1)} be fulfilled.
Then there exists a constant $C$ independent of $N$ such that
\begin{equation*}
\|(\Vol - \VolSEn)\VolSEn\|_{\mathcal{L}(\Csp,\Csp)}
\leq \frac{C}{\sqrt{N}}.
\end{equation*}
\end{lem}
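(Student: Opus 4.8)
The plan is to estimate the operator norm $\|(\Vol - \VolSEn)\VolSEn\|_{\mathcal{L}(\Csp,\Csp)}$ by interpreting the composition as an error of the SE-Sinc indefinite integration applied to $K\cdot(\VolSEn\mathbd{f})$. Fix $\mathbd{f}\in\Csp$ with $\|\mathbd{f}\|_{\Csp}\leq 1$ and set $\mathbd{u}=\VolSEn\mathbd{f}$, so that
\[
(\Vol - \VolSEn)\VolSEn[\mathbd{f}](t)
=\Vol[\mathbd{u}](t)-\VolSEn[\mathbd{u}](t)
=\int_a^t K(s)\mathbd{u}(s)\diff s
-\JnSE[K\mathbd{u}](t).
\]
Componentwise this is exactly the error of the SE-Sinc indefinite integration applied to the scalar function $\sum_j k_{ij}u_j$. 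To invoke Theorem~\ref{Thm:SE-Sinc-indef} I need each such function, multiplied by $Q$, to lie in $\LC_{\alpha}(\SEt(\domD_d))$. Since $k_{ij}Q\in\LC_{\alpha}(\SEt(\domD_d))$ by (SE1), it suffices to show that each $u_j=(\VolSEn\mathbd{f})_j$ extends to a bounded analytic function on $\SEt(\domD_d)$ with norm bounded independently of $N$ (a bound depending only on the data through $L$, $\alpha$, $d$, $n$, $b-a$). This is the crux of the argument.

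The key step is therefore a uniform bound on $\VolSEn\mathbd{f}$ as an element of $\Xsp=\{\Hinf(\SEt(\domD_d))\}^n$. Here $\VolSEn[\mathbd{f}](z)=\sum_{j=-N}^N K(\tSE_j)\mathbd{f}(\tSE_j)\SEtDiv(jh)J(j,h)(\SEtInv(z))$, which is plainly analytic on $\SEt(\domD_d)$ for each fixed $N$; what must be controlled is the size. First I would observe that $|\mathbd{f}(\tSE_j)|\leq 1$ componentwise, that $|k_{ij}(\tSE_j)|\leq L|Q(\tSE_j)|^{\alpha-1}=L|Q(\tSE_j)|^{\alpha}/|Q(\tSE_j)|$ on the real axis (using $k_{ij}Q\in\LC_{\alpha}$), and that $|\SEtDiv(jh)|=|Q(\tSE_j)|/(b-a)$; hence the weight $\SEtDiv(jh)$ cancels the singular factor and leaves $|k_{ij}(\tSE_j)\SEtDiv(jh)|\leq L|Q(\tSE_j)|^{\alpha}/(b-a)\leq L(b-a)^{2\alpha-1}/4^{\alpha}$, bounded uniformly in $j$ and $N$. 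Then I would bound $\sum_{j=-N}^N|J(j,h)(\SEtInv(z))|$ uniformly on $\SEt(\domD_d)$: on the real axis $J(j,h)(x)$ sums telescopically (the $1/2+\Si/\pi$ factors form a bounded increasing sequence), and the standard estimates for $\Si$ on the strip $\domD_d$ (as used in the proof of Theorem~\ref{Thm:SE-Sinc-indef}, and available in Stenger's monograph) give $\sum_j|J(j,h)(\zeta)|\leq C' h\cdot(\text{something like }1/h)$, i.e. bounded by a constant times $b-a$ independent of $N$. Combining these yields $\|\VolSEn\mathbd{f}\|_{\Xsp}\leq C_0$ with $C_0$ independent of $N$, so the family $\{(\VolSEn\mathbd{f})_j Q \cdot k_{ij}/Q\}$ — more precisely $Q\cdot\sum_j k_{ij}(\VolSEn\mathbd{f})_j$ — lies in a fixed ball of $\LC_{\alpha}(\SEt(\domD_d))$ with constant $n L C_0$.

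Finally I would apply Theorem~\ref{Thm:SE-Sinc-indef} to each of the $n$ scalar functions $s\mapsto\sum_{j=1}^n k_{ij}(s)(\VolSEn\mathbd{f})_j(s)$, whose product with $Q$ lies in $\LC_{\alpha}(\SEt(\domD_d))$ with the uniform constant just obtained. The theorem, with the usual choice of $h$ in \eqref{Def:h-SE}, gives
\[
\max_{a\leq t\leq b}\left|\int_a^t \sum_{j=1}^n k_{ij}(s)u_j(s)\diff s - \JnSE\Bigl[\sum_{j=1}^n k_{ij}u_j\Bigr](t)\right|
\leq \tilde{C}\exp\bigl(-\sqrt{\pi d\alpha N}\bigr)
\]
for a constant $\tilde{C}$ independent of $N$. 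Taking the maximum over $i=1,\ldots,n$ and over $\|\mathbd{f}\|_{\Csp}\leq 1$ gives the operator-norm bound, and since $\exp(-\sqrt{\pi d\alpha N})=\Order(1/\sqrt{N})$ (indeed $\Order(N^{-p})$ for every $p$), the claimed estimate $C/\sqrt{N}$ follows. The main obstacle is the uniform-in-$N$ analytic bound on $\VolSEn\mathbd{f}$ over the complex domain $\SEt(\domD_d)$: on the real axis the weighted kernel bound is immediate, but controlling $\sum_j|J(j,h)(\SEtInv(z))|$ for $z$ off the real axis requires the delicate $\Si$-estimates on the strip, which is precisely the technical heart borrowed from~\cite[Lemma~6.5]{okayama11:_theor} and the analysis underlying Theorem~\ref{Thm:SE-Sinc-indef}.
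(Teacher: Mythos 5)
Your central step fails. You need the family $\{\VolSEn\mathbd{f}:\ \|\mathbd{f}\|_{\Csp}\leq 1,\ N\in\mathbb{N}\}$ to lie in a \emph{fixed} ball of $\Xsp=\{\Hinf(\SEt(\domD_d))\}^n$, so that Theorem~\ref{Thm:SE-Sinc-indef} can be applied to $K\cdot\VolSEn\mathbd{f}$ with an $N$-independent constant. That is false. The function $\VolSEn[\mathbd{f}]$ is a linear combination of the basis functions $J(j,h)(\SEtInv(z))$, and for $z\in\SEt(\domD_d)$ off the real line the argument of $\Si$ is $\pi(\zeta/h-j)$ with imaginary part as large as $\pi d/h=\sqrt{\pi d\alpha N}$. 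Since $\Si$ grows exponentially in the imaginary direction (e.g.\ $\Si(\imnum v)=\imnum\int_0^v t^{-1}\sinh t\diff t\sim \imnum \mathrm{e}^v/(2v)$), one has $\sup_{z\in\SEt(\domD_d)}|J(j,h)(\SEtInv(z))|$ of order $h\,\mathrm{e}^{\pi d/h}$, not $\Order(h)$; and because the coefficients $K(\tSE_j)\mathbd{f}(\tSE_j)\SEtDiv(jh)$ can be given arbitrary signs by choosing $\mathbd{f}\in\Csp$, no cancellation between the oscillating $\cos$-parts of $\Si$ rescues the sum. Hence $\|\VolSEn\mathbd{f}\|_{\Xsp}$ can grow like $\exp(\sqrt{\pi d\alpha N})$, and feeding this constant into Theorem~\ref{Thm:SE-Sinc-indef} exactly cancels the $\exp(-\sqrt{\pi d\alpha N})$ gain, leaving only $\Order(1)$ (shrinking the strip to some $d''<d$ does not help: the loss $\mathrm{e}^{\pi d''/h}$ and the gain $\mathrm{e}^{-\pi d''/h}$ balance on every substrip). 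The implausibly strong conclusion you reached --- an exponentially small bound for a quantity the lemma only claims to be $\Order(N^{-1/2})$ --- should itself have been a warning: if the composed operator inherited full analytic accuracy, the lemma would say so.

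The argument the paper relies on (it defers to Lemma~6.5 of~\cite{okayama11:_theor}, of which this is the system version) never leaves the real interval, precisely because $\VolSEn\mathbd{f}$ is analytic but not uniformly bounded on the complex domain. Roughly, one shows that $K(\cdot)\VolSEn[\mathbd{f}](\cdot)$ is, uniformly in $N$ and in $\|\mathbd{f}\|_{\Csp}\leq 1$, bounded and smooth \emph{on $[a,b]$ only} (using bounds for $S(j,h)$ and $J(j,h)$ on the real axis, where your weighted-kernel estimates are correct), and then bounds the indefinite-integration error for such merely real-smooth integrands by a direct first-order quadrature estimate; this is what produces the algebraic rate $\Order(h)=\Order(1/\sqrt{N})$ rather than an exponential one. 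Your opening reduction of the composition to ``one more application of the SE-Sinc indefinite integration'' is the right first move, but the analyticity-with-uniform-bound hypothesis of Theorem~\ref{Thm:SE-Sinc-indef} is exactly what $\VolSEn\mathbd{f}$ does not satisfy uniformly in $N$.
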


Furthermore,
$\|\VolSEn\|_{\mathcal{L}(\Csp,\Csp)}$ is uniformly bounded,
since $\VolSEn \mathbd{f}$ converges to $\Vol\mathbd{f}$
for any $\mathbd{f}\in\Csp$.
Thus, from Eq.~\eqref{InEq:Bound-Inverse-Op}, we obtain
the desired result: $(\mathcal{I} - \VolSEn)^{-1}$
exists and uniformly bounded
for all sufficiently large $N$.
This completes the proof of Theorem~\ref{thm:converge-SE-Sinc-Nystroem}
(the SE-Sinc-Nystr\"{o}m method).

The proof for the DE-Sinc-Nystr\"{o}m method goes on
in exactly the same way.
Let us introduce the operator
$\JnDE$ as
\begin{align*}
\JnDE[\mathbd{f}](t) &= \sum_{j=-N}^N \mathbd{f}(\tDE_j)\DEtDiv(jh)J(j,h)(\DEtInv(t)),
\end{align*}
and $\VolDEn$ as $\VolDEn[\mathbd{f}](t)=\JnDE[K\mathbd{f}](t)$.
The difference from the SE is the next lemma,
which is also straightforward extension
from the existing one~\cite[Lemma~6.9]{okayama11:_theor}.
\begin{lem}
Let the assumption {\rm (DE1)} be fulfilled.
Then there exists a constant $C$ independent of $N$ such that
\begin{equation*}
\|(\Vol - \VolDEn)\VolDEn\|_{\mathcal{L}(\Csp,\Csp)}
\leq C\left(\frac{\log(2 d N/\alpha)}{N}\right)^2.
\end{equation*}
\end{lem}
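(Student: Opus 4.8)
The plan is to reduce the estimate to the DE-Sinc indefinite integration error of Theorem~\ref{Thm:DE-Sinc-indef}. While $(\Vol-\VolDEn)$ applied to a generic $\mathbd{f}\in\Csp$ admits no useful bound, the \emph{composed} function $\mathbd{u}:=K\VolDEn[\mathbd{f}]$ inherits from (DE1) enough regularity for that theorem to apply. Since $\Vol=\Jint(K\,\cdot\,)$ and $\VolDEn=\JnDE(K\,\cdot\,)$ by construction, for every $\mathbd{f}\in\Csp$ one has the identity
\[
(\Vol-\VolDEn)\VolDEn[\mathbd{f}](t)=(\Jint-\JnDE)\bigl[K\,\VolDEn[\mathbd{f}]\bigr](t),
\]
so it suffices to bound, componentwise, the DE-Sinc indefinite integration error of $u_i=\sum_{j=1}^{n}k_{ij}\,(\VolDEn[\mathbd{f}])_j$ for $i=1,\dots,n$. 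First I would note that each $u_i$ is analytic on $\DEt(\domD_d)$: every constituent $t\mapsto J(m,h)(\DEtInv(t))$ is analytic there, since $\DEtInv$ is analytic on $\DEt(\domD_d)$ for $0<d<\pi/2$ and $\Si$ is entire, and $k_{ij}=(k_{ij}Q)/Q$ is analytic there by (DE1) because $Q$ has no zero in $\DEt(\domD_d)$.

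The heart of the proof is to show $(u_iQ)\in\LC_{\alpha}(\DEt(\domD_d))$ with $\LC_{\alpha}$-constant $\le C\|\mathbd{f}\|_{\Csp}\,h\exp\!\left\{\pi d/h\right\}$, where $h=\log(2dN/\alpha)/N$. Two quantitative ingredients are needed. (i) A uniform-in-$m$ strip bound
\[
\sup_{z\in\DEt(\domD_d)}\bigl|J(m,h)(\DEtInv(z))\bigr|\le C\,h^{2}\exp\!\left\{\pi d/h\right\},
\]
which I would derive from the growth of the sine integral on a horizontal strip of half-width $\pi d/h$: one factor $h$ is the prefactor in the definition of $J(m,h)$, and a \emph{second} factor $h$ comes from the decay $|\Si(\xi+\imnum\eta)|\le C\,\mathrm{e}^{|\eta|}/|\eta|$ (reflecting $\int_0^{|\eta|}(\sinh t/t)\diff t\sim\mathrm{e}^{|\eta|}/(2|\eta|)$), the real part $\xi$ being irrelevant to the supremum. (ii) The weighted-sum bound $\sum_{m=-N}^{N}\DEtDiv(mh)\,|Q(\tDE_m)|^{\alpha-1}=\Order(1/h)$, which holds because the left side is, up to the factor $h$, a Riemann sum for $\int_a^b|Q(u)|^{\alpha-1}\diff u=(b-a)^{2\alpha-1}\Bfunc(\alpha,\alpha)<\infty$ (finite as $\alpha>0$), the terms with $|m|$ near $N$ being negligible because $\DEtDiv(mh)$ there decays double-exponentially while $|Q(\tDE_m)|^{\alpha-1}$ grows only exponentially. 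Combining (i), (ii) with the pointwise estimates $|k_{ij}(z)|\le L|Q(z)|^{\alpha-1}$ and $|k_{j\ell}(\tDE_m)f_\ell(\tDE_m)|\le L|Q(\tDE_m)|^{\alpha-1}\|\mathbd{f}\|_{\Csp}$ from (DE1), the triangle inequality over the sums (over $j,\ell\in\{1,\dots,n\}$ and over $m=-N,\dots,N$) gives $|u_i(z)|\le C\,n^{2}L^{2}\|\mathbd{f}\|_{\Csp}\,h\,\mathrm{e}^{\pi d/h}\,|Q(z)|^{\alpha-1}$, the asserted membership; the only new feature relative to the scalar version~\cite[Lemma~6.9]{okayama11:_theor} is the harmless factor $n^{2}$.

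It then remains to apply Theorem~\ref{Thm:DE-Sinc-indef} to each $u_i$ on $\DEt(\domD_d)$: its hypothesis is exactly $(u_iQ)\in\LC_{\alpha}(\DEt(\domD_d))$, the step size $h=\log(2dN/\alpha)/N$ inside $\JnDE$ is the prescribed one, and the constant there is a fixed multiple of the $\LC_{\alpha}$-bound of the integrand. This yields
\[
\bigl\|(\Jint-\JnDE)[u_i]\bigr\|_{C([a,b])}\le C\|\mathbd{f}\|_{\Csp}\cdot h\,\mathrm{e}^{\pi d/h}\cdot h\,\mathrm{e}^{-\pi d/h}=C\|\mathbd{f}\|_{\Csp}\,h^{2}=C\|\mathbd{f}\|_{\Csp}\left(\frac{\log(2dN/\alpha)}{N}\right)^{2},
\]
the exponentials cancelling \emph{exactly} because the same $d$ controls both the analyticity strip of $\mathbd{u}$ and the quadrature (the factor $h\,\mathrm{e}^{-\pi d/h}$ being the form of the bound in Theorem~\ref{Thm:DE-Sinc-indef}); maximizing over $i$ and over $\|\mathbd{f}\|_{\Csp}\le1$ completes the proof. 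The main obstacle is the strip estimate (i): capturing \emph{both} powers of $h$ is what matters, since the second one---from the $1/|\eta|$ decay of $\Si$---is precisely what sharpens $\Order(\log N/N)$ (the output of a crude count, and already more than enough for the fourth hypothesis of Theorem~\ref{Thm:Atkinson-Nystroem}) to the stated $\Order((\log N/N)^{2})$; the identity, linearity, ingredient (ii), and the scalar-to-system passage are routine.
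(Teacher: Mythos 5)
Your proposal is correct and takes essentially the route of the proof this paper omits (it defers to the scalar case, \cite[Lemma~6.9]{okayama11:_theor}): the identity $(\Vol-\VolDEn)\VolDEn=(\Jint-\JnDE)(K\,\VolDEn[\cdot])$, the strip bound $\sup_z|J(m,h)(\DEtInv(z))|\leq Ch^2\mathrm{e}^{\pi d/h}$, the $\Order(1/h)$ weighted sum, and the exact cancellation of $\mathrm{e}^{\pm\pi d/h}$ are precisely the standard argument, with only a factor $n^2$ added for the system. The one point to make explicit is that, since the integrand $K\,\VolDEn[\mathbd{f}]$ varies with $N$, Theorem~\ref{Thm:DE-Sinc-indef} must be invoked in its explicit-constant form (error bounded by a fixed multiple of the $\LC_{\alpha}$ constant of $fQ$ times $h\,\mathrm{e}^{-\pi d/h}$), which holds in the source but is not stated in the version quoted here.
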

This completes the proof of Theorem~\ref{thm:converge-DE-Sinc-Nystroem}
(the DE-Sinc-Nystr\"{o}m method).

\subsubsection{SE/DE-Sinc-collocation method}

Let us consider the SE-Sinc-collocation method first.
Notice the relation
$\tilde{y}_i^{(N)}(t) = \ProjSE[y_i^{(N)}](t)$,
where $y_i^{(N)}$ is
the solution of the SE-Sinc-Nystr\"{o}m method
(see Eq.~\eqref{Def-UsolSE}),
and $\tilde{y}_i^{(N)}$ is the solution of the SE-Sinc-collocation method
(see Eq.~\eqref{Def-SE-Sinc-colloc}).
Then we have
\begin{equation}
\|y_i - \tilde{y}_i^{(N)}\|_{\Csp}
\leq \|y_i - \ProjSE {y}_i\|_{\Csp}
+\|\ProjSE\|_{\mathcal{L}(\Csp,\Csp)}\|y_i - {y}_i^{(N)}\|_{\Csp}.
\label{leq:SE-colloc-estim}
\end{equation}
The first term on the right hand side
can be estimated by Theorem~\ref{Thm:SE-Sinc-Base}.
On the second term,
use Theorem~\ref{thm:converge-SE-Sinc-Nystroem}
for $\|y_i - {y}_i^{(N)}\|_{\Csp}$,
and use the next lemma
to obtain $\|\ProjSE\|_{\mathcal{L}(\Csp,\Csp)}\leq C\log (N+1)$.
\begin{lem}[Stenger~{\cite[p.\,142]{stenger93:_numer}}]
\label{Lem:Sinc-Real-Sum}
Let $h>0$. Then it holds that
\begin{equation*}
\sup_{\xi\in\mathbb{R}}\sum_{j=-N}^N |S(j,h)(\xi)|
\leq \frac{2}{\pi}(3 + \log N).
\end{equation*}
\end{lem}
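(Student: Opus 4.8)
The plan is to reduce the estimate to a single real‑variable inequality and then bound a harmonic‑type sum, keeping the coefficient of $\log N$ exactly $2/\pi$. Set $x=\xi/h$; as $\xi$ ranges over $\Rset$ so does $x$, so the bound will automatically be independent of $h$. The key identity is $\sin\pi(x-j)=(-1)^{j}\sin\pi x$, which gives $|S(j,h)(\xi)|=|\sin\pi x|/(\pi|x-j|)$ for $x\notin\mathbb{Z}$ and hence
\[
 \sum_{j=-N}^{N}|S(j,h)(\xi)|=|\sin\pi x|\sum_{j=-N}^{N}\frac{1}{\pi|x-j|}=:T(x).
\]
Since $T(x)\le1$ whenever $x\in\mathbb{Z}$ (only the nearest term survives), and $T$ is even, it suffices to estimate $T(x)$ for $x\ge0$, $x\notin\mathbb{Z}$.

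I would then split into two cases. If $x>N+\tfrac12$, every $|x-j|=x-j$ exceeds $\tfrac12$, the $2N+1$ values are spaced by $1$, and comparison with $\int\mathrm{d}x/(2x+1)$ yields $\sum_{j}1/(\pi|x-j|)<\tfrac2\pi\bigl(1+\tfrac12\log(4N+1)\bigr)$, which together with $|\sin\pi x|\le1$ and $\sqrt{4N+1}/N\le\sqrt5<e^{2}$ is below $\tfrac2\pi(3+\log N)$. If $0\le x\le N+\tfrac12$, let $j_{0}\in\{0,\dots,N\}$ be the node nearest $x$ and $t=|x-j_{0}|\in[0,\tfrac12]$, so $|\sin\pi x|=\sin\pi t$. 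A short term‑by‑term comparison shows that recentring the summation window at $0$ only increases $\sum_{j}1/|x-j|$ (replacing the indices $\{-N-j_{0},\dots,N-j_{0}\}$ by $\{-N,\dots,N\}$ removes $N-j_{0}+1,\dots,N$ and adds $-(N+1),\dots,-(N+j_{0})$, and the $i$‑th added term $1/(N+i+t)$ is $\le$ the $i$‑th removed term $1/(N-j_{0}+i-t)$ since $-j_{0}\le2t$). Pairing each $v$ with $-v$ and splitting off $v=1$ then gives
\[
 T(x)\le\frac{\sin\pi t}{\pi}\Bigl[\frac1t+\frac2{1-t^{2}}
 +\sum_{v=2}^{N}\Bigl(\frac1{v-t}+\frac1{v+t}\Bigr)\Bigr],\qquad t\in(0,\tfrac12].
\]

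The remaining work is to bound this bracket. For the sum I use $1/(v-t)+1/(v+t)=2v/(v^{2}-t^{2})$ and the comparisons $\sum_{v=2}^{N}1/(v\mp t)\le\int_{1}^{N}\mathrm{d}x/(x\mp t)$, which give $\sum_{v=2}^{N}(1/(v-t)+1/(v+t))\le\log\bigl((N^{2}-t^{2})/(1-t^{2})\bigr)<2\log N+\log\tfrac43$; since $\sin\pi t\le1$ this block is at most $\tfrac1\pi(2\log N+\log\tfrac43)$. For the block $\tfrac{\sin\pi t}{\pi}(\tfrac1t+\tfrac2{1-t^{2}})$ it is enough to bound $\sin\pi t\,(\tfrac1t+\tfrac2{1-t^{2}})$ by an absolute constant; a single split (say at $t=\tfrac15$) does this, using on $(0,\tfrac15]$ that $\sin\pi t/t\le\pi$ and that $\sin\pi t/(1-t^{2})$ is increasing, and on $[\tfrac15,\tfrac12]$ that $\sin\pi t/t$ is decreasing while $2\sin\pi t/(1-t^{2})\le\tfrac83$; the larger of the two pieces is $5\sin(\pi/5)+\tfrac83$, which is $<6-\log\tfrac43$. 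Adding the two blocks gives $T(x)\le\tfrac1\pi\bigl(5\sin(\pi/5)+\tfrac83+\log\tfrac43+2\log N\bigr)<\tfrac2\pi(3+\log N)$, and the same estimate covers $N=1$, where the final sum is empty.

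I expect the real difficulty to be the sharpness of the constant $3$ rather than any single inequality: the obvious bounds — estimating each term of the sum by $1/(\pi(|j-j_{0}|-\tfrac12))$, or handling the central term and the $v=1$ term separately by $1$ and $\tfrac8{3\pi}$ — overshoot $\tfrac2\pi(3+\log N)$ by a small amount. The two devices that make the bookkeeping close enough are (i) pairing $v$ with $-v$, so the far‑side term $1/(v+t)\le1/v$ (and not $1/(v-\tfrac12)$) keeps the $\log N$‑coefficient exactly $\tfrac2\pi$, and (ii) keeping $\tfrac1t$ grouped with $\tfrac2{1-t^{2}}$, so that the region $t\to0$ and the edge $t=\tfrac12$ are not both charged their worst cases. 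The window‑recentring monotonicity step is elementary but cannot be skipped, since a direct estimate at $j_{0}\ne0$ is too wasteful.
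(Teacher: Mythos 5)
The paper does not prove this lemma at all: it is imported verbatim from Stenger's book (the citation on p.~142 is the proof, as far as this paper is concerned). What you have written is therefore not an alternative to the paper's argument but a complete, self-contained elementary derivation, and I have checked that it closes. The reduction $|S(j,h)(\xi)|=|\sin\pi x|/(\pi|x-j|)$ with $x=\xi/h$, the evenness of $T$, the outer case $x>N+\tfrac12$ (where $\sqrt{4N+1}\le\sqrt5\,N<e^{2}N$ for $N\ge1$), the integral comparison $\sum_{v=2}^{N}(1/(v-t)+1/(v+t))\le\log\bigl((N^{2}-t^{2})/(1-t^{2})\bigr)<2\log N+\log\tfrac43$, and the numerical verification $5\sin(\pi/5)+\tfrac83\approx5.606<6-\log\tfrac43\approx5.712$ are all correct, and the $N=1$ degenerate case is covered. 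Two small points you should tidy up. First, in the recentring step the words ``removes'' and ``adds'' are swapped relative to the direction of replacement you announce, though the displayed comparison $1/(N+i+t)\le1/(N-j_{0}+i-t)$ is the right one and does prove $\sum_{j}1/|x-j|\le\sum_{v=-N}^{N}1/|t-v|$. Second, that comparison as written treats only the case $x=j_{0}+t$ with $t\ge0$; when $x=j_{0}-t$ the shifted window is $\{-(N-j_{0}),\dots,N+j_{0}\}$ and the term-by-term condition becomes $2t\le j_{0}$, which holds because $t\le\tfrac12$ and $j_{0}\ge1$ whenever this case occurs ($j_{0}=0$ forces $x=t\ge0$). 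Neither issue is a gap in substance, but the second genuinely needs the one extra line. Your closing paragraph correctly identifies why the naive bounds miss the constant $3$; keeping $1/t$ paired with $2/(1-t^{2})$ and pairing $v$ with $-v$ is exactly what makes the coefficient of $\log N$ come out as $2/\pi$ rather than something larger.
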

This completes the proof of Theorem~\ref{thm:SE-Sinc-converge}
(the SE-Sinc-collocation method).

The proof for the DE-Sinc-collocation method goes on
in exactly the same way.
By using the relation $\tilde{y}_i^{(N)}(t) = \ProjDE[y_i^{(N)}](t)$,
we have the similar inequality as Eq.~\eqref{leq:SE-colloc-estim}
(just replace SE with DE).
By estimating each term
via Theorems~\ref{Thm:DE-Sinc-Base} and~\ref{thm:converge-DE-Sinc-Nystroem}
and Lemma~\ref{Lem:Sinc-Real-Sum},
Theorem~\ref{thm:DE-Sinc-converge} is established.






\begin{small}
\bibliographystyle{model1b-num-names}
\bibliography{Sinc-colloc-InitVal}
\end{small}






\end{document}